\definecolor{dark-green}{rgb}{0.0,0.4,0.0}
\definecolor{pantone_369}{rgb}{0.4784, 0.7098, 0.0863}
\newcommand{\ms}{\mbox{\rm \tiny ms}}
\newcommand{\Vms}{V_{H,\Omega}^{\ms}}
\newcommand{\Vmsk}{V_{H,k}^{\ms}}
\newcommand{\umsk}{u_{H,k}^{\ms}}
\newcommand{\hatumsk}{\hat{u}_{H,k}^{\ms}}
\newcommand{\dott}{(\cdot,t)}
\newcommand{\sym}{ \mbox{\tiny \rm sym} }
\newcommand{\R}{\mathbb{R}}
\newcommand{\T}{\mathcal{T}}
\newcommand{\eps}{\varepsilon}
\newcommand{\aeps}{a^\eps}
\newcommand{\ahom}{a^0}
\newcommand{\ueps}{u^\eps}
\newcommand{\veps}{v^\eps}
\newcommand{\uhom}{u^0}
\newcommand{\hatueps}{\hat{u}^\eps}
\newcommand{\beps}{b^{\eps}}
\newcommand{\uHk}{u_{H,k}}
\newcommand{\uHtk}{u_{H,\triangle t, k}}
\newcommand{\bHk}{b_{H,k}}
\newcommand{\hatuHk}{\hat{u}_{H,k}}
\newcommand*{\dt}[1]{\accentset{\mbox{\large\bfseries .}}{#1}}
\newcommand*{\ddt}[1]{\accentset{\mbox{\large\bfseries .\hspace{-0.25ex}.}}{#1}}
\newcommand{\en}{\enspace}
\newcommand{\quotes}[1]{``#1''}
\newtheorem{theorem}{Theorem}[section]
\newtheorem{lemma}[theorem]{Lemma}
\newtheorem{proposition}[theorem]{Proposition}
\newtheorem{problem}[theorem]{Problem}
\theoremstyle{definition}
\newtheorem{definition}[theorem]{Definition}
\newtheorem{remark}[theorem]{Remark}
\title{Localized orthogonal decomposition method for the wave equation with a continuum of scales}
\begin{document}
\maketitle

\begin{center}
{\large Assyr Abdulle\footnote[1]{ANMC, Section de Math\'{e}matiques, \'{E}cole polytechnique f\'{e}d\'{e}rale de Lausanne, 1015 Lausanne, Switzerland, Assyr.Abdulle@epfl.ch} and Patrick Henning\footnote[2]{Institut f\"ur Numerische und Angewandte Mathematik, Westf\"alische Wilhelms-Universit\"at M\"unster, Einsteinstr. 62, D-48149 M\"unster, Germany, Patrick.Henning@wwu.de}}\\[2em]
\end{center}

\renewcommand{\thefootnote}{\fnsymbol{footnote}}
\renewcommand{\thefootnote}{\arabic{footnote}}

\begin{abstract}
This paper is devoted to numerical approximations for the wave equation with a multiscale character. Our approach is formulated in the framework of the Localized Orthogonal Decomposition (LOD) interpreted as a numerical homogenization with an $L^2$-projection. We derive explicit convergence rates of the method in the $L^{\infty}(L^2)$-, $W^{1,\infty}(L^2)$- and $L^{\infty}(H^1)$-norms without any assumptions on higher order space regularity or scale-separation. The order of the convergence rates depends on further graded assumptions on the initial data. We also prove   the convergence of the method in the framework of G-convergence without any  structural assumptions on the initial data, i.e. without assuming that it is well-prepared. This rigorously justifies the method. Finally, the performance of the method is demonstrated in numerical experiments.
\end{abstract}

\paragraph*{Keywords}
finite element, wave equation, numerical homogenization, multiscale method, localized orthogonal decomposition

\paragraph*{AMS subject classifications}
35L05, 65M60, 65N30, 74Q10, 74Q15

\section{Introduction}
\label{section:introduction}

This work is devoted to the linear wave equation in a heterogeneous medium with multiple highly varying length-scales. We are looking for an unknown wave function $\ueps$ that fulfills the equation
\begin{align}
\nonumber\label{wave-equation-strong} \partial_{tt} \ueps(x,t) - \nabla \cdot \left( \aeps(x) \nabla \ueps(x,t) \right) &= F(x,t) \qquad \mbox{in } \Omega \times(0,T], \\
\ueps(x,t) &= 0 \hspace{48pt} \mbox{on } \partial \Omega \times [0,T], \\
\nonumber\ueps(x,0) = f(x) \quad \mbox{and} \quad \partial_t \ueps(x,0) &= g(x) \hspace{35pt} \mbox{in } \Omega.
\end{align}
Here, $\Omega$ denotes the medium, $[0,T] \subset \mathbb{R}^+$ the relevant time interval, $\aeps$ the wave speed, $F$ a source term and $f$ and $g$ the initial conditions for the wave and its time derivative respectively. The parameter $\eps$ is an abstract parameter which simply indicates that a certain quantity is subject to rapid variations on a very fine scale (relative to the extension of $\Omega$). The parameter $\eps$ can be seen as a measure for the minimum wave length of these variations. Precisely, we assume that for a function  $z^{\eps}$,  $\| z^\eps\|_{H^s(\Omega)}$ is large for $s>1$ and cannot be approximated with a FE function on a coarse grid while the typical fine mesh needed to approximate such functions is computationally too expensive.
However, we stress that we do not assign a particular value or meaning to $\eps$ in this work. Due to the fast variations in the data functions, which take place at a scale that is very small compared to the total size of the medium, these problems are typically referred to as {\it multiscale problems}. Equations such as (\ref{wave-equation-strong}) with a multiscale character arise in various fields such as material sciences, geophysics, seismology or oceanography. For instance, the propagation and reflection of seismic waves can be used to determine the structure and constitution of subsurface formations. In particular, it is necessary in order to locate petroleum reservoirs in earth's crust.

Trying to solve the multiscale wave equation (\ref{wave-equation-strong}) with a 
direct computation, using e.g. finite elements or finite differences, exceeds typically the possibilities of today's super computers. The reason is that the computational mesh needs to resolve all variations of the coefficient matrix $\aeps$, which leads to extremely high dimensional solution spaces and hence linear systems of tremendous size that need to be solved at every time step. 

In order to tackle this issue, numerical homogenization can be applied. The term numerical homogenization refers to a wide set of numerical methods that are based on replacing the multiscale problem (\ref{wave-equation-strong}) by an effective/upscaled/homogenized equation which is of the same type as the original equation, but which has no longer multiscale properties (the fine scale is \quotes{averaged out}). Hence, it can be solved in lower dimensional spaces with reduced computational costs. The obtained approximations yield the effective macroscopic properties of $\ueps$ (i.e. they are good $L^2$-approximations of $\ueps$). Multiscale methods that were specifically designed for the wave equation, can be e.g. found in \cite{AbG11,EHR11,KoM06,OwZ08,VMK05}. In Section \ref{subsection-surbey-multiscale-methods} we give a detailed overview on these approaches.

In this paper, we will present a multiscale method for the wave equation which does neither require structural assumptions such as a scale separation nor does it require regularity assumptions on $\aeps$. We will not exploit any higher space regularity than $H^1$. Furthermore, it is not necessary to solve expensive global elliptic fine scale problems in a pre-process (sometimes referred to as the 'one-time-overhead', cf. \cite{JEG10,JiE12,OwZ08}). Our method is based on the following consideration: the $L^2$-projection $P_{L^2}$ of the (unknown) exact solution $\ueps$ into a coarse finite element space is assumed to be a good approximation to an (unknown) homogenized solution. Furthermore, the $L^2$-projection $P_{L^2}(\ueps)$ can be well approximated in a low dimensional finite element space. If we can derive an equation for $P_{L^2}(\ueps)$, all computations can be performed in the low dimensional space and are hence cheap. This approach fits into the framework of the Localized Orthogonal Decomposition (LOD) initially proposed in \cite{MaP14} and further developed in \cite{HeP13,HeM14}. The idea of the framework is to decrease the dimension of a high dimensional finite element space by splitting it into the direct sum of a low dimensional space with high $H^1$-approximation properties and a high dimensional remainder space with negligible information. The splitting is based on an orthogonal decomposition with respect to an energy scalar product. In this work we will pick up this concept, since the remainder space in the splitting is nothing but the kernel of the $L^2$-projection.

The general setting of this paper is established in Section \ref{section:motivation}, where we also motivate the method. In Section \ref{main-section-LOD} we introduce the space discretization that is required for formulating the method in a rigorous way. In Section \ref{section-main-results-and-misc} we state our main results and we give a survey on other multiscale strategies.
These main results 
are proved in Section \ref{section:proofs}. Finally, numerical experiments confirming our theoretical results are presented in Section \ref{section-numerics}.

\section{Motivation - Numerical homogenization by $L^2$-projection}
\label{section:motivation}

In the following, we consider the wave equation (\ref{wave-equation-strong}) in weak formulation, i.e. we seek $\ueps \in L^2(0,T;H^1_0(\Omega))$ and $\partial_{tt} \ueps \in L^2(0,T;H^{-1}(\Omega))$ such that for all $v \in H^1_0(\Omega)$  and a.e. $t > 0$
\begin{align}
\label{wave-equation-weak}
\nonumber\langle \partial_{tt} \ueps \dott, v \rangle + \left( \aeps \nabla \ueps \dott, \nabla v \right)_{L^2(\Omega)} &= \left( F \dott, v \right)_{L^2(\Omega)},\\
\left( \ueps( \cdot , 0 ) , v \right)_{L^2(\Omega)} &= \left( f , v \right)_{L^2(\Omega)},\\
\nonumber \left( \partial_t \ueps( \cdot , 0 ) , v \right)_{L^2(\Omega)} &= \left( g , v \right)_{L^2(\Omega)}.
\end{align}
Here, the dual pairing is understood as $\langle L, v \rangle=L(v)$ for $L\in H^{-1}(\Omega)$ and $v \in H^1_0(\Omega)$. In the following, we make use of the shorthand notation $W^{m,p}(H^s_0):=W^{m,p}(0,T;H^s_0(\Omega))$ for $0\le m < \infty$, $1\le p \le \infty$ and $0\le s \le 1$. We denote $H^0_0(\Omega):=L^2(\Omega)$ and $W^{0,p}:=L^p$.
In order to guarantee the existence of a unique solution of the system (\ref{wave-equation-strong}), we make the following assumptions:
\begin{itemize}
\item[(H0) $\bullet$] $\Omega\subset\mathbb{R}^{d}$, for $d=1,2,3$, denotes a bounded Lipschitz domain with a piecewise polygonal boundary;
\item the data functions fulfill $F \in L^2(0,T;L^2(\Omega))$, $f \in H^1_0(\Omega)$ and $g \in L^2(\Omega)$;
\item the matrix-valued function $\aeps\in [L^\infty(\Omega)]^{d\times d}_{\sym}$ that describes the propagation field is symmetric and it is uniformly bounded and positive definite, i.e. $\aeps \in \mathcal{M}(\alpha,\beta,\Omega)$ for $\beta\geq\alpha>0$. Here, we denote
\begin{eqnarray}\label{e:spectralbound}
\lefteqn{\mathcal{M}(\alpha,\beta,\Omega) :=} \\
\nonumber&\hspace{0pt}&\{ a \in [L^\infty(\Omega)]^{d\times d}_{\sym}| \hspace{5pt}
\alpha |\xi|^2 \le a(x) \xi \cdot \xi \le \beta |\xi|^2 \enspace\text{for all } \xi \in \R^d \mbox{ and almost all }x\in \Omega\}.
\end{eqnarray}
\end{itemize}
Under the assumptions listed in (H0) there exists a unique weak solution $\ueps$ of the wave equation (\ref{wave-equation-strong}) with $\partial_t \ueps \in L^2(0,T;L^2(\Omega))$. Furthermore, $\ueps$ is regular in time, in the sense that $\ueps \in C^0(0,T;H^1_0(\Omega))$ and $\partial_t \ueps \in C^0(0,T;L^2(\Omega))$. This result can be e.g. found in \cite[Chapter 3]{LiM72a}.

In addition to the above assumptions, we also implicitly assume that the wave speed $\aeps$ has rapid variations on a very fine scale which need to be resolved by an underlying fine grid. The dimension of the resulting finite element space (for the spatial discretization) is hence very large. The method proposed in the subsequent sections aims to reduce the computational cost that is associated with solving the discretized wave equation in this high dimensional finite element space.

In order to simplify the notation, we define
\begin{align}
\label{definition:problem:bilinearform}\beps(v,w):=\int_{\Omega} \aeps \nabla v \cdot \nabla w \qquad \mbox{for } v,w \in H^1_0(\Omega).
\end{align}

We next motivate a multiscale method for the wave equation and discuss the framework of our approach. All the subsequent discussion will be later rigorously justified by a general convergence proof. We are interested in finding a {\it homogenized} or {\it upscaled} approximation of $\ueps$. In engineering applications this can be a function describing the macroscopically measurable properties of $\ueps$ and from an analytical perspective it can be defined as a suitable limit of $\ueps$ for $\eps \rightarrow 0$ (see Section \ref{subsection:homogenization:setting} below for more details). 

Since $\ueps$ is a continuous function in $t$, we restrict our considerations
to a fixed time $t$. Hence, we leave out the time dependency in the notation and denote e.g. $\ueps = \ueps \dott$. 

Let $\T_H$ denote a given coarse mesh and let $V_H \subset H^1_0(\Omega)$ denote a corresponding coarse finite dimensional subspace of $H^1_0(\Omega)$ that is sufficiently accurate to obtain good $L^2$-approximations. To quantify what we mean by \quotes{sufficiently accurate}, let $P_H$ denote the $L^2$-projection of $H^1_0(\Omega)$ on $V_H$, i.e. for $v \in H^1_0(\Omega)$ the projection $P_H(v)\in V_H$ fulfills
\begin{align}
\label{L2-projection}\int_{\Omega} P_H (v) w_H = \int_{\Omega} v w_H \qquad \mbox{for all } w_H \in V_H.
\end{align}
We assume that 
$$\| \ueps - P_H(\ueps) \|_{L^2(\Omega)} \le \delta_H ,$$
where $\delta_H$ is a given small tolerance. Let us denote $u_H :=P_{H}(\ueps) \in V_H$. 
Obviously, the $L^2$-projection will average out all small oscillations that cannot be seen on the coarse grid $\T_H$ (in this sense the projection homogenizes $\ueps$). By definition, $u_H$ is the best approximation of $\ueps$ in $V_H$ with respect to the $L^2$-norm. Next, we want to find a macroscopic equation that is fulfilled by $u_H$.

Since $\nabla u_H$ does not approximate $\nabla \ueps$, we are interested in a corrector $Q(u_H)$, such that
\begin{align*}
\int_{\Omega} \aeps (\nabla u_H + \nabla Q(u_H) ) \cdot \nabla v_H = \int_{\Omega} \aeps \nabla \ueps \cdot \nabla v_H  \qquad \mbox{for all } v_H \in V_H,
\end{align*}
or in a symmetric formulation
\begin{align}
\label{preliminary-method} \int_{\Omega} \aeps (\nabla u_H + \nabla Q(u_H) ) \cdot (\nabla v_H + \nabla Q(v_H) ) = \int_{\Omega} \aeps \nabla \ueps \cdot  (\nabla v_H + \nabla Q(v_H) )
\end{align}
for all $v_H \in V_H$. A suitable corrector operator $Q$ needs to fulfill two properties:\\
1. $Q(u_H)$ must be in the kernel of the $L^2$-projection $P_H$ in order to preserve the $L^2$-best-approximation property
\begin{align}
\label{trax-L2} \int_{\Omega} \ueps v_H =  \int_{\Omega} u_H \hspace{2pt} v_H = \int_{\Omega} (u_H + Q(u_H)) v_H \qquad \mbox{for all } v_H \in V_H.
\end{align}
2. It must incorporate the oscillations of $\aeps$. A natural way to achieve this is to make the ansatz
\begin{align*}
\int_{\Omega} \aeps (\nabla u_H + \nabla Q(u_H) ) \cdot \nabla v_h = 0,
\end{align*}
where the test function $v_h$ should be in $H^1_0(\Omega)$, but with the constraint $v_h \in \mbox{kern}(P_H)$. The constraint is sufficient to make the problem well posed (solution space and test function space are identical). 

In summary, we have the following strategy if $\ueps$ is a known function: find $u_H \in V_H$ that fulfills equation (\ref{preliminary-method}) and where for a given $v_H \in V_H$ the corrector $Q(v_H) \in \mbox{kern}(P_H)$ solves $\int_{\Omega} \aeps (\nabla v_H + \nabla Q(v_H) ) \cdot \nabla v_h = 0$ for all $v_h \in \mbox{kern}(P_H)$. Observe that this $u_H$ fulfills indeed $u_H = P_H(\ueps)$ as desired, because (by equation (\ref{trax-L2})) the function $e:=\ueps - u_H - Q(u_H)$ is in the kernel of $P_H$. Hence for all $v_H \in V_H$
\begin{align*}
\int_{\Omega} u_H v_H = \int_{\Omega} (u_H + Q(u_H)) v_H = \int_{\Omega} (u_H + Q(u_H) + e) v_H = \int_{\Omega} \ueps v_H,
\end{align*}
which means just $u_H=P_{H}(\ueps)$. Consequently, we also have the estimate
\begin{align}
\label{abstract-delta-est}\| \ueps - u_H \|_{L^2(\Omega)} \le \delta_H.
\end{align}
The only remaining problem is that we do not know the term $\int_{\Omega} \aeps \nabla \ueps \cdot (\nabla v_H + \nabla Q(v_H))$ on the right hand side of (\ref{preliminary-method}). However, we know that
\begin{align*}
\int_{\Omega} \aeps \nabla \ueps \nabla v = \int_{\Omega} F v - \int_{\Omega} \partial_{tt} \ueps \hspace{2pt} v.
\end{align*}
If the solution $\ueps$ is sufficiently regular then $\partial_{tt} \ueps$ is well approximated by $\partial_{tt} u_H =  P_{H}(\partial_{tt} \ueps)$ in the sense of (\ref{abstract-delta-est}).

This suggests to replace $\partial_{tt} \ueps$ by $\partial_{tt} u_H$ and to solve the approximate problem to find $\bar{u}_H \in V_H$ with
\begin{eqnarray*}
\int_{\Omega} \aeps (\nabla \bar{u}_H + \nabla Q(\bar{u}_H)) \cdot (\nabla v_H + \nabla Q(v_H)) &=&\int_{\Omega}(F-\partial_{tt} \bar{u}_H)
(\nabla v_H + \nabla Q(v_H))\\
&\approx& \int_{\Omega} \aeps \nabla \ueps (\nabla v_H + \nabla Q(v_H))
\end{eqnarray*}
for all $v_H \in V_H$.

Note that the above presented strategy is not yet a ready-to-use method, since the exact computation of the corrector $Q$ involves global fine scale problems. In order to overcome this difficulty, a localization of $Q$ is required together with a suitable fine scale discretization. The final method is presented in the Section \ref{section:multiscale:methods}. Before we can formulate the method, we introduce a suitable fully discrete space-discretization.

\section{The LOD method for the wave equation}
\label{main-section-LOD}

In this section we propose a space discretization for localizing the fine scale computations in the previously described ansatz. For that purpose, we make use of the tools of the Localized Orthogonal Decomposition (LOD) that was introduced in \cite{MaP14} (see also \cite{EGM13,HeM14,HeP13,HMP14,MaP15} for related works) and that originated from the Variational Multiscale Method (VMM, cf. \cite{HFM98,LaM07,Mal11}). We then derive our multiscale method for the wave equation with a continuum of scales.

\subsection{Spatial discretization}
\label{subsection-LOD}
The spatial discretization involves two discretization levels. On the one hand, we have a quasi-uniform coarse mesh on $\Omega$ that is denoted by $\T_H$. $\T_H$ consists either of conforming shape regular simplicial elements or of conforming shape regular quadrilateral elements. The elements are denoted by $K \in \T_H$ and the coarse mesh size $H$ is defined as the maximum diameter of an element of $\T_H$. On the other hand we have a fine mesh that is denoted by $\T_h$. It also consists of conforming and shape regular elements. Furthermore, we assume that $\T_h$ is obtained from an arbitrary refinement of $\T_H$, 
with the additional requirement that
$h \le (H/2)$, where $h$ denotes the maximum diameter of an element of $\T_h$. 
In practice we usually have $h \ll H$.
In particular, $\T_h$ needs to be fine enough to capture all the oscillations of $\aeps$. 
In contrast, the coarse mesh is only required to provide accurate $L^2$-approximations.

For $\T=\T_H,\T_h$ we denote
\begin{align}
\label{definition:finite:element:spaces}P_1(\T) &:= \{v \in C^0(\omega) \;\vert \;\forall K\in\T,v\vert_K \text{ is a polynomial of total degree}\leq 1\}, \\
\nonumber Q_1(\T) &:= \{v \in C^0(\omega) \;\vert \;\forall K\in\T,v\vert_K \text{ is a polynomial of partial degree}\leq 1\}.
\end{align}
With this, we define the classical coarse Lagrange finite element space $V_H$ by $V_H:=P_1(\T_h)\cap H^1_0(\Omega)$ for a simplicial mesh and by $V_H:=Q_1(\T_h)\cap H^1_0(\Omega)$ for a quadrilateral mesh. The fine scale space $V_h$ is defined in the same way. 

Subsequently, we will make use of the notation $a \lesssim b$ that abbreviates $a\leq Cb$, where $C$ is a constant that can dependent on $d$, $\Omega$, $\alpha$, $\beta$ and interior angles of the coarse mesh, but not on the mesh sizes $H$ and $h$. In particular it does not depend on the possibly rapid oscillations in $\aeps$. We write $a\lesssim_T b$ if $C$ is allowed to further depend on $T$ and the data functions $F$, $f$ and $g$.

The set of the interior Lagrange points (interior vertices) of the coarse grid $\T_H$ is denoted by $\mathcal{N}_H$. For each node $z \in \mathcal{N}_H$ we let $\Phi_z \in V_H$ denote the corresponding nodal basis function that fulfills $\Phi_z(z)=1$ and $\Phi_z(y)=y$ for all $y \in \mathcal{N}_H \setminus \{ z \}$.

In the next step, we define the kernel of the $L^2$-projection (\ref{L2-projection}) restricted to $V_h$ in a slightly alternative way. Recall that this kernel was required as the solution space for the corrector problems discussed in Section \ref{section:motivation}. However, from the computational point of view it is more suitable to not work with the $L^2$-projection directly, since it involves to solve a system of equations in order to verify if an element is in the kernel. For that reason, we subsequently express kern$(P_H\vert_{V_h})$ equivalently by means of a weighted Cl\'ement-type quasi-interpolation operator $I_H$ (cf. \cite{Car99}) that is defined by
\begin{align}
\label{def-weighted-clement} I_H : H^1_0(\Omega) \rightarrow V_H,\quad v\mapsto I_H(v):= 
\sum_{z \in \mathcal{N}_{H}}
v_z \Phi_z \quad \text{with }v_z := \frac{(v,\Phi_z)_{L^2(\Omega)}}{(1,\Phi_z)_{L^2(\Omega)}}.
\end{align}
With that, we define 
$$W_h := \mbox{\rm {Ker}}(I_H\vert_{V_h}).$$
Indeed the space $W_h$ is the previously discussed kernel of the $L^2$-projection. This claim can be easily verified: if $I_H(v_h)=0$ for an element $v_h \in V_h$, then we have by the definition of $I_H$ that $(v_h,\Phi_z)_{L^2(\Omega)}=0$ for all $z \in \mathcal{N}_H$. Since $\Phi_z$ is just the nodal basis of $V_H$ we have $(v_h,\Phi_H)_{L^2(\Omega)}=0$ for any $\Phi_H \in V_H$. Hence $W_h=\mbox{Ker}(I_H\vert_{V_h}) \subset \mbox{Ker}(P_H\vert_{V_h})$. The reverse inclusion $\mbox{Ker}(P_H\vert_{V_h}) \subset W_h$ is straightforward. In particular, we have the splitting $V_h = \mbox{Im}(P_H\vert_{V_h}) \oplus \mbox{Ker}(P_H\vert_{V_h}) = {\mbox{Im}(I_H\vert_{V_h}) \oplus \mbox{Ker}(I_H\vert_{V_h})}= V_H \oplus W_h$.

We can now define the optimal corrector $Q_{h,{\Omega}}:V_H \rightarrow W_h$ (in the sense of Section \ref{section:motivation}) as the solution $Q_{h,{\Omega}}(v_H) \in W_h$ of
\begin{align}
\label{projection-orthogonal}
\beps( v_H + Q_{h,{\Omega}}(v_H) , w_h ) = 0 \qquad \mbox{for all } w_h \in W_h,
\end{align}
where $\beps(\cdot,\cdot)$ is defined in (\ref{definition:problem:bilinearform}). However, finding $Q_{h,{\Omega}}(v_H)$ involves a problem in the whole fine space $V_h$ and is therefore very expensive. For this purpose, we wish to localize the corrector $Q_{h,{\Omega}}$ by element patches.

For $k\in \mathbb{N}$, we define patches $U_k(K)$ that consist of a coarse element $K \in \T_H$ and $k$-layers of coarse elements around it. More precisely $U_k(K)$ is defined iteratively by
\begin{equation}\label{def-patch-U-k}
    \begin{aligned}
      U_0(K) & := K, \\
      U_k(K) & := \cup\{T\in \T_H\;\vert\; T\cap U_{k-1}(K)\neq\emptyset\}\quad k=1,2,\ldots .
    \end{aligned}
\end{equation}
Practically, we will later see that we only require small values of $k$ (typically $k=1,2,3$).

With that, we define the localized corrector operator in the following way:
\begin{definition}[Localized Correctors]
\label{definition:localized:ms:space}
For $k\in \mathbb{N}$, $K\in \T_H$ and $U_k(K)$ defined according to (\ref{def-patch-U-k}), we define the localized version of $\mbox{Ker}(P_H\vert_{V_h})$ by
$$W_h(U_k(K)):=\{ w_h \in W_h| \hspace{2pt} w_h=0 \enspace \mbox{in } \Omega \setminus U_k(K) \}.$$
The localized version of the operator (\ref{projection-orthogonal}) can be constructed in the following way. First, for $v_H \in V_H$ find $Q_{h,k}^{K}(v_H) \in {W}_h(U_k(K))$ with
\begin{align}
\label{local-corrector-problem}\int_{U_k(K)} \aeps \nabla Q_{h,k}^{K}(v_H)\cdot \nabla w_h = - \int_K \aeps \nabla v_H \cdot \nabla w_h \qquad \mbox{for all } w_h \in W_h(U_k(K)).
\end{align}
Then, the global approximation of $Q_{h,k}$ is defined by
\begin{align}
\label{global-corrector}Q_{h,k}(v_H):=\sum_{K\in \T_H} Q_{h,k}^{K}(v_H).
\end{align}
Observe that if $k$ is large enough so that $U_k(K)=\Omega$ for all $K \in \T_H$ (a case that is only useful for the analysis), we have $Q_{h,k}=Q_{h,{\Omega}}$, where $Q_{h,{\Omega}}$ is the corrector operator introduced in (\ref{projection-orthogonal}).
\end{definition}

\begin{remark}[Splittings of $V_h$]
The space that is spanned by the image of $(I+Q_{h,k})_{\vert V_H}$ is given by
\begin{align}
\label{localized-ms-space}
\Vmsk:=\{ v_H + Q_{h,k}(v_H)| \hspace{2pt} v_H \in V_H\}.
\end{align}
Furthermore, we denote $\Vms:=\{ v_H + Q_{h,\Omega}(v_H)| \hspace{2pt} v_H \in V_H\}$ for the optimal corrector. This gives us the following splittings of $V_h$:
\begin{align*}
V_h &= V_H \oplus W_h, \qquad \hspace{7pt} \mbox{where } \quad V_H  \perp W_h \quad \hspace{7pt} \mbox{w.r.t. } (\cdot , \cdot)_{L^2(\Omega)},\\
V_h &= \Vms \oplus W_h, \qquad \mbox{where } \quad \Vms  \perp W_h \quad \mbox{w.r.t. } \beps (\cdot , \cdot),\\
V_h &= \Vmsk \oplus W_h.
\end{align*}
\end{remark}

Beside the operator $I_H$ that we defined in (\ref{def-weighted-clement}) other choices of interpolation operators (such as the classical Cl\'ement interpolation) are possible to construct splittings $V_h = \Vmsk \oplus W_h$ with $\Vmsk \perp_{\beps (\cdot , \cdot)} W_h$. If the operator fulfills various standard properties (like interpolation error estimates, $H^1$-stability, etc.; cf. \cite{HMP14} for an axiomatic list) the space $\Vmsk$ will have similar approximation properties as the multiscale space that we use in this contribution. However, we note that the particular Cl\'ement-type interpolation operator from (\ref{def-weighted-clement}) yields the $L^2$-orthogonality $V_H  \perp W_h$ which is typically not the case for other operators. This is central in our approach. In this paper we particularly exploit this feature to show that we obtain higher order convergence rates under the assumption of additional regularity. We also note that the Lagrange-interpolation fails to yield good approximations (cf. \cite{HeP13}).

\begin{remark}
Observe that the solutions $Q_{h,k}(v_H)$ of (\ref{local-corrector-problem}) are well defined by the Lax-Milgram theorem. Furthermore, it is was shown that solutions such as $Q_{h,k}(v_H)$ (with localized source term) decay with exponential speed to zero outside of the support of the source term (cf. \cite{MaP14,HeM14}). More precisely, we will later see that we have an estimate of the type $\| \nabla (Q_{h,k} - Q_{h,\Omega})(v_H) \|_{L^2(\Omega)} \lesssim k^{d/2} \theta^{k} \| \nabla v_H \|_{L^2(\Omega)}$ for a generic constant $0<\theta <1$. Hence, we have exponential convergence in $k$ and small values for $k$ (typically $k=2,3$) can be used to get accurate approximations of $Q_{h,\Omega}$. For small values of $k$, the local problems (\ref{local-corrector-problem}) are affordable to solve, they can be solved in parallel and $Q_{h,k}(\Phi_z)$ is only locally supported for every nodal basis function $\Phi_z \in V_H$.
\end{remark}

\subsection{Semi and fully-discrete multiscale method}
\label{section:multiscale:methods}

Based on the discretization and the correctors defined in Section \ref{subsection-LOD}, we present a semi-discrete multiscale method for the wave equation and state a corresponding a priori error estimate. As an example of a time-discretization, we also present a Crank-Nicolson realization of the method and state a fully discrete space-time error estimate. In order to abbreviate the notation, we define effective/macroscopic bilinear forms for $v_H,w_H \in V_H$ by
\begin{align*}
\bHk (v_H, w_H) &:= \beps( v_H + Q_{h,k}(v_H), w_H + Q_{h,k}(w_H)) \quad \mbox{and}\\
 (v_H, w_H)_{H,k} &:= ( v_H + Q_{h,k}(v_H), w_H + Q_{h,k}(w_H))_{L^2(\Omega)},
\end{align*}
where $\beps$ is defined in (\ref{definition:problem:bilinearform}).

\subsubsection{Semi-discrete multiscale method}

We can now formulate the method.
\begin{definition}[Semi-discrete multiscale method]
Let $k \in \mathbb{N}$ denote the localization parameter that determines the patch size $U_k(K)$ for $K\in \T_H$ (according to (\ref{def-patch-U-k})) and hence also determines the localized corrector operator $Q_{h,k}$. The semi-discrete approximation $\uHk \in H^2(0,T;V_H)$ (of $\ueps$ in $L^2$) solves the following system for all $v_H \in V_H$  and $t > 0$
\begin{align}
\label{semi-discrete-lod-equation}
\nonumber ( \partial_{tt} \uHk \dott, v_H )_{H,k} + \bHk( \uHk \dott,  v_H ) &= \left( F \dott, v_H + Q_{h,k}(v_H)\right)_{L^2(\Omega)},\\
(\uHk+Q_{h,k}(\uHk))( \cdot , 0 ) &= \pi_{H,k}^{\ms}(f),\\
\nonumber \partial_t (\uHk+Q_{h,k}(\uHk))( \cdot , 0 ) &= P_{H,k}^{\ms}(g),
\end{align}
with the projections $\pi_{H,k}^{\ms}$ and $P_{H,k}^{\ms}$ defined in (\ref{elliptic-projection-pi_Hk}) and (\ref{L2-projection-P_Hk}) below.
\end{definition}
Recall $\Vmsk$ be the space defined in (\ref{localized-ms-space}). We subsequently define two elliptic projections for $v \in L^{2}(0,T;H^1_0(\Omega))$ and one $L^2$-projection.
\begin{enumerate}
\item The projection $\pi_h : L^{2}(0,T;H^1_0(\Omega)) \rightarrow L^{2}(0,T;V_h)$ is given by:\\
find $\pi_h(v) \in L^{2}(0,T;V_h)$ with
\begin{align}
\label{elliptic-projection-pi_h}\beps ( \pi_{h}(v)(\cdot,t) , w ) = \beps ( v(\cdot,t) , w ) \qquad \mbox{for all } w \in V_h, \quad \mbox{for almost every } t \in (0,T).
\end{align}
\item The projection $\pi_{H,k}^{\ms} : L^{2}(0,T;H^1_0(\Omega)) \rightarrow L^{2}(0,T;\Vmsk)$ is given by:\\
find $\pi_{H,k}^{\ms}(v)(\cdot,t) \in \Vmsk$ with
\begin{align}
\label{elliptic-projection-pi_Hk}\beps( \pi_{H,k}^{\ms}(v)(\cdot,t) , w ) = \beps ( v(\cdot,t) , w ) \qquad \mbox{for all } w \in \Vmsk, \quad \mbox{for almost every } t \in (0,T).
\end{align}
For $U_k(K)=\Omega$, we denote by $\pi_{H,\Omega}^{\ms}$ the above projection mapping from $L^{2}(0,T;H^1_0(\Omega))$ to $L^{2}(0,T;\Vms)$.
\item The $L^2$-projection $P_{H,k}^{\ms} : L^{2}(0,T;H^1_0(\Omega)) \rightarrow L^{2}(0,T;\Vmsk)$ is given by:\\

find $P_{H,k}^{\ms}(v)(\cdot,t) \in \Vmsk$ with
\begin{align}
\label{L2-projection-P_Hk}( P_{H,k}^{\ms}(v)(\cdot,t) , w )_{L^2(\Omega)} = ( v(\cdot,t) , w )_{L^2(\Omega)} \qquad \mbox{for all } w \in \Vmsk, \quad \mbox{for a.e. } t \in (0,T).
\end{align}
\end{enumerate}

\begin{remark}[Existence and uniqueness]
If assumption (H0) is fulfilled, the system (\ref{semi-discrete-lod-equation}) has a unique solution. This result directly follows from standard ODE theory, after a reformulation of (\ref{semi-discrete-lod-equation}) into a (finite) system of first order ODE's with constant coefficients and applying Duhamel's formula. Due to the Sobolev embedding theorems, we have $\uHk \in C^1([0,T];V_H)$. If additionally $f \in C^0(0,T;L^2(\Omega))$, we even get $\uHk \in C^2([0,T];V_H)$. The corrector $Q_{h,k}(\uHk)$ inherits this time regularity.
\end{remark}

\subsubsection{Fully discrete multiscale method}

In this section, for $J\in \mathbb{N}$, we let $\triangle t:=\frac{T}{J}>0$ denote the time step size and we define $t^n:=n \triangle t$ for $n \in \mathbb{N}$. 
In order to propose a time
discretization of (\ref{semi-discrete-lod-equation}), we introduce some simplifying notation. First, recall that for every coarse interior node $z \in \mathcal{N}_H$, we denote the corresponding nodal basis function by $\Phi_z$. The total number of these coarse nodes shall be denoted by $N_H$ and we assume that they are ordered by some index set, i.e. $\mathcal{N}_H= \{ z_1, \dots, z_N \}$. With that we define the corresponding stiffness matrix $S_{k} \in \mathbb{R}^{N \times N}$ by the entries
\begin{align*}
(S_{k})_{ij} := \bHk( \Phi_{z_j},  \Phi_{z_i} )
\end{align*}
and the entries of the corrected mass matrix $M_{k} \in \mathbb{R}^{N \times N}$ by 
\begin{align*}
(M_{k})_{ij} := ( \Phi_{z_j},  \Phi_{z_i} )_{H,k}.
\end{align*}
The load vectors $G_{k},\bar{f}_{k},\bar{g}_{k} \in \mathbb{R}^{N}$ arising in (\ref{semi-discrete-lod-equation}) are defined by
\begin{align}
\label{def-data-vectors-cn}\nonumber(G_{k})_i(t) &:= \left( F \dott, \Phi_{z_i} + Q_{h,k}(\Phi_{z_i}) \right)_{L^2(\Omega)},\\
(\bar{f}_{k})_i \mbox{  is such that } \pi_{H,k}^{\ms}(f) &= \sum_{i=1}^N (\bar{f}_{k})_i (\Phi_{z_i} + Q_{h,k}(\Phi_{z_i})),\\
(\bar{g}_{k})_i \mbox{  is such that } P_{H,k}^{\ms}(g) &= \sum_{i=1}^N (\bar{g}_{k})_i (\Phi_{z_i} + Q_{h,k}(\Phi_{z_i})),
\end{align}
with $\pi_{H,k}^{\ms}$ being the elliptic projection on $\Vmsk$ (see (\ref{elliptic-projection-pi_Hk})) and
$P_{H,k}^{\ms}$ being the $L^2$-projection on $\Vmsk$ (see (\ref{elliptic-projection-pi_Hk})).
Hence, we can write (\ref{semi-discrete-lod-equation}) as the system: find $\xi_k(t)\in \mathbb{R}^{N}$ with 
\begin{align*}
M_k \ddt\xi_k(t) + S_k \xi_k(t) &= G_k(t), \quad \hspace{10pt} \mbox{for } 0<t<T
\end{align*}
and $\xi_k(0)=\bar{f}_{k}$ and $\dt \xi_k(0)=\bar{g}_{k}$. This yields $\uHk \dott = \sum_{i=1}^N (\xi_k(t))_i \Phi_{z_i}$.

In order to solve this system we can apply the Newmark scheme.

\begin{definition}[Newmark scheme]
For $n\ge 1$, given initial values $\xi_k^{(0)} \in \R^N$ and $\xi_k^{(1)} \in \R^N$, and given load vectors $G_k^{(n)} \in \R^N$, we 
define the Newmark approximation $\xi_k^{(n+1)}$ of $\xi_k(t^{(n+1)})$ iteratively as the solution $\xi_k^{(n+1)} \in \mathbb{R}^{N}$ of
\begin{eqnarray*}
\lefteqn{(\triangle t)^{-2} M_k \left( \xi_k^{(n+1)} -2 \xi_k^{(n)} + \xi_k^{(n-1)} \right)}\\
&+& \frac{1}{2} S_k \left( 2 \hat{\beta} \xi_k^{(n+1)} + (1 - 4 \hat{\beta} + 2 \hat{\gamma} ) \xi_k^{(n)} + (1 + 2\hat{\beta} -2\hat{\gamma}) \xi_k^{(n-1)} \right) = G_k^{(n)}.
\end{eqnarray*}
Here, $\hat{\beta}$ and $\hat{\gamma}$ are given parameters.

An example for an implicit method is given by the choice $\hat{\beta}=1/4$ and $\hat{\gamma}=1/2$, which leads to the classical Crank-Nicolson scheme. Another example is the leap-frog scheme that is obtained for $\hat{\beta}=0$ and $\hat{\gamma}=1/2$. The leap-frog scheme is explicit (up to a diagonal mass matrix which can be obtained by mass lumping).
\end{definition}

As one possible realization, we subsequently consider the case $\hat{\beta}=1/4$ and $\hat{\gamma}=1/2$, i.e. the Crank-Nicolson scheme (see Definition \ref{definition-fully-discrete-method} below). We state a corresponding a priori error estimate and the numerical experiments in Section \ref{section-numerics} are also performed with this method. Before we present the main theorems, let us detail the method by specifying the initial values and the load vectors for the Crank-Nicolson method.

\begin{definition}[Fully-discrete Crank-Nicolson multiscale method]
\label{definition-fully-discrete-method}
As before, let $k \in \mathbb{N}$ denote the localization parameter that determines the patch size $U_k(K)$ for $K\in \T_H$ (according to (\ref{def-patch-U-k})). The load functions $G_{k},\bar{f}_{k},\bar{g}_{k} \in \mathbb{R}^{N}$ are defined according to (\ref{def-data-vectors-cn}) and we denote 
$G_k^{(n)}:=\frac{1}{2}( G_k{t^n} + G_k(t^{n-1}) )$ for $t^n=n \triangle t$, $n\ge 1$. Defining $\xi_k^{(0)}:=\bar{f}_{k}$ and $\eta_k^{(0)}:=\bar{g}_{k}$, the approximation $(\xi_k^{(n)},\eta_k^{(n)}) \in \R^N \times \R^N$ in the $n$'th time step is given as the solution of the linear system
\begin{align*}
(\frac{\triangle t^2}{4} S_k + M_k) \eta_k^{(n)} &= (M_k - \frac{\triangle t^2}{4} S_k ) \eta_k^{(n-1)} - \triangle t S_k \xi_k^{(n-1)} + \triangle t G_k^{(n)}
\end{align*}
and $\xi_k^{(n)}:= \frac{\triangle t}{2} \eta_k^{(n)} +\frac{\triangle t}{2} \eta_k^{(n-1)} + \xi_k^{(n-1)}$.

With that, the Crank-Nicolson approximation of (\ref{semi-discrete-lod-equation}) is defined as the piecewise linear function $\uHtk$ with 
\begin{align}
\label{fully-discrete-lod-approximation}\uHtk(\cdot,t) := \sum_{i=1}^N \left( \frac{t^{n+1}-t}{\triangle t} (\xi_k^{(n)})_i + \frac{t-t^{n}}{\triangle t} (\xi_k^{(n+1)})_i \right)\Phi_{z_i} \qquad \mbox{for } t \in [t^{n},t^{n+1}].
\end{align}
\end{definition}

\begin{remark}
Existence and uniqueness of $(\xi_k^{(n)},\eta_k^{(n)})$ in Definition \ref{definition-fully-discrete-method} is obvious
since the system matrix $(\frac{\triangle t^2}{4} S_k + M_k)$ has only positive eigenvalues.
\end{remark}

\section{Main results and survey of other multiscale strategies}
\label{section-main-results-and-misc}

\subsection{Convergence results under minimal assumptions on the initial data}
\label{subsection:homogenization:setting}
In this section we recall some fundamental results concerning the homogenization of the wave equation and relate it to our multiscale method. 
In this framework, we are able to give a convergence result between the  homogenized solution and our multiscale approximation with the weakest possible assumptions: no scale separations, no assumptions on the initial data except the one that guaranties existence and uniqueness of the original problem \eqref{wave-equation-strong}.

The essential question of classical homogenization is the following: if $(\aeps)_{\eps>0}$ represents a sequence of coefficients and if we consider the corresponding sequence of solutions $(\ueps)_{\eps>0}$ of (\ref{wave-equation-weak}), does $\ueps$ converge in some sense to a $\uhom$ that we can characterize in a simple way? The hope is that $\uhom$ fulfills some equation (the homogenized equation) which is cheap to solve since it does no longer involve multiscale features (which were averaged out in the limit process for $\eps \rightarrow 0$). With the abstract tool of $G$-convergence it is possible to answer this question:

\begin{definition}[$G$-convergence]\label{def-G-convergence}
A sequence $(\aeps)_{\eps >0}\subset \mathcal{M}(\alpha,\beta,\Omega)$ (i.e. with uniform spectral bounds in $\eps$) is said to be $G$-convergent to $\ahom \in \mathcal{M}(\alpha,\beta,\Omega)$ if for all $F \in H^{-1}(\Omega)$ the sequence of solutions $v^{\eps} \in H^1_0(\Omega)$ of
\begin{align*}
\int_{\Omega} \aeps \nabla v^{\eps} \cdot \nabla v = F(v) \qquad \mbox{for all } v \in H^1_0(\Omega)
\end{align*}
satisfies $v^{\eps} \rightharpoonup v^0$ weakly in $H^1_0(\Omega)$, where $v^0 \in H^1_0(\Omega)$ solves
\begin{align*}
\int_{\Omega} \ahom \nabla v^0 \cdot \nabla v = F(v) \qquad \mbox{for all } v \in H^1_0(\Omega).
\end{align*}
\end{definition}
One of the main properties of $G-$convergence is the following compactness result \cite{Spa68,Tar77}: let $(\aeps)_{\eps >0}$ be a sequence of matrices in  $\mathcal{M}(\alpha,\beta,\Omega)$, then there exists a subsequence $(a^{\eps '})_{\eps '>0}$ and a matrix $a^0\in \mathcal{M}(\alpha,\beta,\Omega)$ such that $(a^{\eps '})_{\eps '>0}$ $G-$converges to $a^0$. For the wave equation, we have the following result obtained in \cite[Theorem 3.2]{BFM92}:

\begin{theorem}[Homogenization of the wave equation]
\label{theorem:homogenization:wave:equation} Let assumptions (H0) be fulfilled and let the sequence of symmetric matrices $(\aeps)_{\eps >0}\subset \mathcal{M}(\alpha,\beta,\Omega)$ be $G$-convergent to some $\ahom \in \mathcal{M}(\alpha,\beta,\Omega)$. Let $\ueps \in L^{\infty}(0,T;H^1_0(\Omega))$ denote the solution of the wave equation (\ref{wave-equation-weak}). Then it holds
\begin{align*}
\ueps &\rightharpoonup \uhom \quad \mbox{weak-}\ast \mbox{ in } L^{\infty}(0,T,H^1_0(\Omega)),\\
\partial_t \ueps &\rightharpoonup \partial_t \uhom \quad \mbox{weak-}\ast \mbox{ in } L^{\infty}(0,T,L^2(\Omega))
\end{align*}
and where $\uhom \in L^2(0,T;H^1_0(\Omega))$ is the unique weak solution of the homogenized problem
\begin{align}
\label{homogenized-equation-weak}
\nonumber\langle \partial_{tt} \uhom \dott, v \rangle + \left( \ahom \nabla \uhom \dott, \nabla v \right)_{L^2(\Omega)} &= \left( F \dott, v \right)_{L^2(\Omega)} \qquad \mbox{for all } v \in H^1_0(\Omega) \mbox{ and } t > 0,\\
\left( \uhom( \cdot , 0 ) , v \right)_{L^2(\Omega)} &= \left( f , v \right)_{L^2(\Omega)} 
\qquad \hspace{21pt} \mbox{for all } v \in H^1_0(\Omega),\\
\nonumber \left( \partial_t \uhom( \cdot , 0 ) , v \right)_{L^2(\Omega)} &= \left( g , v \right)_{L^2(\Omega)}
\quad \hspace{32pt} \mbox{for all } v \in H^1_0(\Omega).
\end{align}
\end{theorem}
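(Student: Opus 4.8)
The plan is to follow the classical strategy for homogenization of hyperbolic problems, combining $\eps$-uniform energy estimates with Tartar's method of oscillating test functions. First I would establish the a priori bounds. Formally testing the weak formulation \eqref{wave-equation-weak} with $v=\partial_t\ueps\dott$ gives the energy identity
\begin{align*}
\frac{1}{2}\frac{d}{dt}\left( \|\partial_t\ueps\dott\|_{L^2(\Omega)}^2 + \beps(\ueps\dott,\ueps\dott)\right) = (F\dott,\partial_t\ueps\dott)_{L^2(\Omega)},
\end{align*}
so that the spectral bounds $\alpha|\xi|^2\le\aeps\xi\cdot\xi\le\beta|\xi|^2$ from \eqref{e:spectralbound}, together with a Gronwall argument, bound $\ueps$ in $L^\infty(0,T;H^1_0(\Omega))$ and $\partial_t\ueps$ in $L^\infty(0,T;L^2(\Omega))$ uniformly in $\eps$ (with constants depending only on $\alpha$, $\beta$, $T$ and the data). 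By Banach--Alaoglu I would extract a subsequence (not relabeled) with $\ueps\rightharpoonup\uhom$ weak-$\ast$ in $L^\infty(0,T;H^1_0(\Omega))$ and $\partial_t\ueps\rightharpoonup w$ weak-$\ast$ in $L^\infty(0,T;L^2(\Omega))$; passing to the limit in the distributional definition of the time derivative identifies $w=\partial_t\uhom$. Since $\|\aeps\nabla\ueps\|_{L^2(\Omega)}\le\beta\|\nabla\ueps\|_{L^2(\Omega)}$, a further subsequence gives $\aeps\nabla\ueps\rightharpoonup\sigma^0$ weak-$\ast$ in $L^\infty(0,T;L^2(\Omega)^d)$.

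The central difficulty, and the heart of the proof, is to identify $\sigma^0=\ahom\nabla\uhom$: both factors of $\aeps\nabla\ueps$ oscillate, so the weak limit is not the product of the weak limits. To resolve this I would use the oscillating test functions supplied by the $G$-convergence hypothesis. For a fixed time-independent $\phi\in H^1_0(\Omega)$, let $w^\eps\in H^1_0(\Omega)$ solve the auxiliary elliptic problem $\beps(w^\eps,\zeta)=\int_\Omega\ahom\nabla\phi\cdot\nabla\zeta$ for all $\zeta\in H^1_0(\Omega)$. Since the right-hand side functional lies in $H^{-1}(\Omega)$, Definition \ref{def-G-convergence} applies and yields $w^\eps\rightharpoonup\phi$ in $H^1_0(\Omega)$ (the limit problem forces $w^0=\phi$). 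The key manipulation is to test \eqref{wave-equation-weak} with $\psi(t)w^\eps$ for $\psi\in C^\infty([0,T])$ and then exploit the symmetry of $\aeps$ together with the defining relation for $w^\eps$ evaluated at the admissible test function $\zeta=\ueps\dott$:
\begin{align*}
\beps(\ueps\dott,w^\eps)=\beps(w^\eps,\ueps\dott)=\int_\Omega\ahom\nabla\phi\cdot\nabla\ueps\dott.
\end{align*}
This is where the time-independence of the elliptic operator and the symmetry assumption pay off: the troublesome product is converted into one in which only the factor $\nabla\ueps$ still oscillates.

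After this substitution, and integrating by parts in time (first taking $\psi$ compactly supported in $(0,T)$), the identity
\begin{align*}
-\int_0^T(\partial_t\ueps\dott,w^\eps)_{L^2(\Omega)}\psi'(t)\,dt+\int_0^T\!\!\left(\ahom\nabla\phi,\nabla\ueps\dott\right)_{L^2(\Omega)}\psi(t)\,dt=\int_0^T(F\dott,w^\eps)_{L^2(\Omega)}\psi(t)\,dt
\end{align*}
passes to the limit termwise: by Rellich compactness $w^\eps\to\phi$ strongly in $L^2(\Omega)$, so the weak--strong pairings in the first and third terms converge, while in the middle term $\ahom\nabla\phi$ is fixed in $L^2(\Omega)$ and pairs against the weakly convergent $\nabla\ueps$. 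Using the symmetry of $\ahom$, the limit is exactly the weak form of \eqref{homogenized-equation-weak} tested against $\psi(t)\phi$. Repeating the argument with $\psi$ satisfying $\psi(T)=0$ but $\psi(0)\neq0$ keeps the boundary term $(\partial_t\ueps(\cdot,0),w^\eps)_{L^2(\Omega)}=(g,w^\eps)_{L^2(\Omega)}$ and, in the limit, enforces the velocity initial condition $\partial_t\uhom(\cdot,0)=g$.

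It remains to transfer the remaining initial condition and to upgrade the convergence. For the position initial condition I would combine the $\eps$-uniform bounds with the compact embedding $H^1_0(\Omega)\hookrightarrow\hookrightarrow L^2(\Omega)$ in an Aubin--Lions argument to obtain strong convergence $\ueps\to\uhom$ in $C^0([0,T];L^2(\Omega))$; evaluating at $t=0$ turns $\ueps(\cdot,0)=f$ into $\uhom(\cdot,0)=f$. Finally, since the homogenized problem \eqref{homogenized-equation-weak} admits a unique weak solution for $\ahom\in\mathcal{M}(\alpha,\beta,\Omega)$, the limit $\uhom$ is uniquely characterized; hence every subsequence has a sub-subsequence converging to the same $\uhom$, which upgrades the convergence to the full sequence. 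I expect the flux identification of the second and third paragraphs to be the only genuinely delicate step, the energy estimates, weak-$\ast$ compactness, and uniqueness being routine.
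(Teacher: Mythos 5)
The paper does not actually prove this theorem: it is quoted verbatim from Brahim-Otsmane, Francfort and Murat \cite[Theorem 3.2]{BFM92}, so there is no in-paper argument to compare against. That said, your reconstruction is the standard and correct route — uniform energy estimates, weak-$\ast$ compactness, and Tartar's oscillating test functions $w^\eps$ built from the $G$-convergence hypothesis, with the symmetry of $\aeps$ used to swap the arguments of $\beps(\ueps,w^\eps)$ and thereby defuse the product of two weakly convergent oscillating factors. This is essentially how the result is obtained in the literature, and the logical skeleton (flux identification via the elliptic auxiliary problem, Rellich for the weak--strong pairings, Arzel\`a--Ascoli/Aubin--Lions for the trace at $t=0$, uniqueness of the limit problem to upgrade from subsequences to the full sequence) is complete. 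Two places would need a little more care in a full writeup, both routine: the energy identity obtained by testing with $\partial_t\ueps$ is only formal, since $\partial_t\ueps\dott\notin H^1_0(\Omega)$ in general, and must be justified through a Galerkin approximation; and the recovery of the velocity initial condition requires knowing that $\partial_t\uhom$ has a well-defined trace at $t=0$, which one gets from the limit equation itself ($\partial_{tt}\uhom\in L^2(0,T;H^{-1}(\Omega))$ combined with $\partial_t\uhom\in L^\infty(0,T;L^2(\Omega))$ gives weak continuity of $\partial_t\uhom$ into $L^2(\Omega)$) before comparing boundary terms. Neither point is a gap in the idea, only in the level of detail.
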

This Theorem and the compactness result stated above show that for {\it any} problem \eqref{wave-equation-strong} based on a sequence of matrices with $\aeps\in\mathcal{M}(\alpha,\beta,\Omega)$, we can extract a subsequence such that the corresponding solution of the wave problem convergence to a homogenized solution. Except for special situations, e.g., locally periodic coefficients $\aeps$, i.e. tensor $\aeps(x)=a(x,\frac{x}{\eps})$ that are $\eps$-periodic on a fine scale or for random stationary tensors, it is not possible to construct $\ahom$ explicitly. The next theorem states that the solution 
$u_{H,K}$ of our multiscale method  \eqref{semi-discrete-lod-equation} converges to the homogenized solution also in the case where no explicit solution of 
$\ahom$ is known. This is a convergence result for the multiscale method to the effective solution of problem \eqref{wave-equation-strong}
under the weakest possible assumptions. Let $\ahom$ denote the $G$-limit of $\aeps$ and let $f^{\eps} \in H^1_0(\Omega)$ be the solution to
\begin{align}\label{def-f-eps}
\int_{\Omega} \aeps \nabla f^{\eps} \cdot \nabla v = \int_{\Omega} \ahom \nabla f \cdot \nabla v \qquad \mbox{for all } v \in H^1_0(\Omega),
\end{align}
where $f$ is the initial value in problem \eqref{wave-equation-strong}.
By the definition of $G$-convergence, we have that $f^{\eps} \rightharpoonup f$ weakly in $H^1_0(\Omega)$ and hence $f^{\eps} \rightarrow f$ strongly in $L^2(\Omega)$. Define  $e_{\mbox{\tiny \rm hom}}=\| \ueps - u^0 \|_{L^{\infty}(L^2)} +\| f^\eps - f \|_{(L^2)}$.
Note that under the assumptions of Theorem \ref{theorem:homogenization:wave:equation}, we have $\lim_{\eps \rightarrow 0} e_{\mbox{\tiny \rm hom}}(\eps) = 0.$

\begin{theorem}[A priori error estimate for the homogenized solution]\label{apriori-lod-homogenization}$\\$
Consider the setting of Theorem \ref{theorem:homogenization:wave:equation}, let $h<\eps <H$ and assume (H0), $g\in H^1_0(\Omega)$, $\partial_t F \in L^2(0,T,L^2(\Omega))$ and $\nabla \cdot (\ahom \nabla f) + F(\cdot,0)\in L^2(\Omega)$.
By $\uhom$ we denote the homogenized solution given by (\ref{homogenized-equation-weak}) and by $\uHk$ we denote the solution of 
\eqref{semi-discrete-lod-equation}.
Under these assumptions there exists a generic constant $C_\theta$ (i.e. independent of $H$, $h$ and $\eps$) such that if $k \ge C_{\theta} |\ln(H)|$ 
it holds
\begin{eqnarray}
\label{main-result-est-1}
\lim_{\eps\rightarrow 0} \lim_{h\rightarrow 0}
\| \uhom - \uHk \|_{L^{\infty}(L^2)} &\lesssim_T& H \\
\label{main-result-est-2}
\lim_{h\rightarrow 0}\| \ueps - \uHk \|_{L^{\infty}(L^2)} &\lesssim_T&  H + e_{hom}(\eps).
\end{eqnarray}
If we replace the elliptic projection $\pi_{H,k}^{\ms}(f)$ in (\ref{semi-discrete-lod-equation}) by the $L^2$-projection $P_{H,k}^{\ms}(f)$, 
the estimate
still remains valid.
\end{theorem}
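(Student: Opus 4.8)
The plan is to establish estimate (\ref{main-result-est-2}) and to deduce (\ref{main-result-est-1}) from it. Indeed, by the triangle inequality
\[ \| \uhom - \uHk \|_{L^{\infty}(L^2)} \le \| \uhom - \ueps \|_{L^{\infty}(L^2)} + \| \ueps - \uHk \|_{L^{\infty}(L^2)}, \]
and since $\| \uhom - \ueps \|_{L^{\infty}(L^2)} \le e_{\mbox{\tiny \rm hom}}(\eps)$ by Theorem \ref{theorem:homogenization:wave:equation}, letting first $h \to 0$ and then $\eps \to 0$ and using $\lim_{\eps \to 0} e_{\mbox{\tiny \rm hom}}(\eps) = 0$ turns (\ref{main-result-est-2}) into (\ref{main-result-est-1}). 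So I concentrate on (\ref{main-result-est-2}). First I would remove the fact that $f$ is not well prepared: let $\hatueps$ solve (\ref{wave-equation-weak}) with $f$ replaced by the corrected datum $f^\eps$ of (\ref{def-f-eps}). Since $\ueps - \hatueps$ solves the wave equation with vanishing source and data $(f - f^\eps, 0)$, the spectral bound $\| \cos( t \sqrt{A^\eps} ) \|_{L^2 \to L^2} \le 1$ for $A^\eps := -\nabla \cdot (\aeps \nabla \cdot)$ gives $\| \ueps - \hatueps \|_{L^{\infty}(L^2)} \le \| f - f^\eps \|_{L^2} \le e_{\mbox{\tiny \rm hom}}(\eps)$. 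The point of the substitution is that, by the very definition (\ref{def-f-eps}), $\nabla \cdot ( \aeps \nabla f^\eps ) = \nabla \cdot ( \ahom \nabla f )$, so $\partial_{tt} \hatueps(\cdot,0) = \nabla \cdot ( \ahom \nabla f ) + F(\cdot,0) \in L^2(\Omega)$ with a norm \emph{independent of} $\eps$; together with $g \in H^1_0(\Omega)$ and $\partial_t F \in L^2(0,T;L^2(\Omega))$, a time-differentiated energy estimate then yields $\| \partial_t \hatueps \|_{L^{\infty}(H^1)} + \| \partial_{tt} \hatueps \|_{L^{\infty}(L^2)} \lesssim_T 1$ uniformly in $\eps$. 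It thus remains to prove $\lim_{h \to 0} \| \hatueps - \uHk \|_{L^{\infty}(L^2)} \lesssim_T H$.

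Write $w := \hatueps$. Passing to the limit $h \to 0$ makes the fine-scale discretization error vanish and replaces $Q_{h,k}$ by the ideal correctors (the LOD constants being independent of $h$); the localized correctors differ from the global corrector $Q_{h,\Omega}$ only by the exponentially small amount $k^{d/2} \theta^k$, which is $\lesssim H$ once $k \ge C_\theta | \ln(H) |$, and this perturbation of the operators, of the right-hand side and of the initial projections propagates to $\uHk$ by stability and Grönwall at the cost of an extra $O(H)$. Hence I may work with the ideal method and the global corrector. Denoting by $U := \uHk + Q_{h,\Omega}(\uHk) \in \Vms$ the corrected approximation, I use $\uHk = P_H(U)$ (because $Q_{h,\Omega}(\uHk) \in \mbox{Ker}(P_H)$) to split $w - \uHk = (w - P_H w) - P_H(U - w)$, whence $\| w - \uHk \|_{L^2} \le \| w - P_H w \|_{L^2} + \| U - w \|_{L^2} \lesssim H \| \nabla w \|_{L^2} + \| U - w \|_{L^2}$. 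The first term is $\lesssim_T H$, and it remains to estimate the $L^2$-error of the corrected solution. Subtracting (\ref{wave-equation-weak}), tested on $\chi \in \Vms$, from the method yields the Galerkin orthogonality $( \partial_{tt}(U - w), \chi )_{L^2(\Omega)} + \beps(U - w, \chi) = 0$ for all $\chi \in \Vms$ and a.e. $t$.

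The heart of the proof is to turn this orthogonality into $\| U - w \|_{L^{\infty}(L^2)} \lesssim_T H$ using only the available regularity. I would split $U - w = \vartheta + \varrho$ with $\vartheta := U - \pi_{H,\Omega}^{\ms}(w) \in \Vms$ and $\varrho := \pi_{H,\Omega}^{\ms}(w) - w$; since the elliptic projection annihilates $\beps(\varrho, \chi)$, the discrete error obeys $( \partial_{tt}\vartheta, \chi )_{L^2(\Omega)} + \beps(\vartheta, \chi) = -( \partial_{tt}\varrho, \chi )_{L^2(\Omega)}$ for $\chi \in \Vms$. A naive energy test with $\partial_t \vartheta$ would require $\| \partial_{tt}\varrho \|_{L^2}$, i.e. space regularity of $\partial_{tt} w$ that is not available (indeed $\partial_{tt} w$ carries the $\eps$-oscillations and is not captured by $\Vms$). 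I would instead use Baker's time-integrated test function $\Phi(s) := \int_s^t \vartheta(\tau)\,d\tau \in \Vms$: testing with $\Phi$, integrating over $(0,t)$ and integrating the right-hand side by parts in time moves one time derivative off $\varrho$, producing $\tfrac{1}{2}\| \vartheta(t) \|_{L^2}^2 + \tfrac{1}{2}\beps(\Phi(0),\Phi(0))$ on the left and, on the right, boundary terms plus $\int_0^t ( \partial_t \varrho, \vartheta )_{L^2(\Omega)}\,ds$. The crucial gain is that only $\partial_t \varrho = (\pi_{H,\Omega}^{\ms} - I)\partial_t w$ now appears, and here I would invoke the robust LOD bound $\| ( \pi_{H,\Omega}^{\ms} - I ) v \|_{L^2} \lesssim H \| \nabla v \|_{L^2}$, valid for \emph{every} $v \in H^1_0(\Omega)$ with no extra regularity or scale separation (it follows from $( \pi_{H,\Omega}^{\ms} - I ) v \in \mbox{Ker}(I_H)$ and the interpolation estimate for $I_H$). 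With the uniform bound $\partial_t w \in L^{\infty}(H^1)$ from the first paragraph this gives $\| \partial_t \varrho \|_{L^{\infty}(L^2)} \lesssim_T H$, and together with the $O(H)$ initial errors $\vartheta(0), \partial_t \vartheta(0)$ (from the approximation properties of the initial projections and $\| f - f^\eps \|_{L^2} \le e_{\mbox{\tiny \rm hom}}(\eps)$) an absorption argument yields $\| \vartheta \|_{L^{\infty}(L^2)} \lesssim_T H$. Since also $\| \varrho \|_{L^{\infty}(L^2)} = \| ( \pi_{H,\Omega}^{\ms} - I ) w \|_{L^{\infty}(L^2)} \lesssim_T H$, this closes the estimate.

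The main obstacle is precisely this last energy estimate: the corrected mass term couples $\partial_{tt} w$ — which oscillates on the unresolved $\eps$-scale and lies only in $L^{\infty}(L^2)$ — against the fine-scale correctors, and any direct estimate loses the power of $H$. It is the combination of the well-prepared reduction (which secures the uniform-in-$\eps$ bound on $\partial_t w$ in $H^1$), the $L^2$-orthogonality $V_H \perp W_h$ special to the operator $I_H$, and Baker's time integration (which lowers the required regularity from $\partial_{tt}\varrho$ to $\partial_t\varrho$) that makes the first-order rate survive under these minimal assumptions. Finally, replacing the elliptic projection $\pi_{H,k}^{\ms}(f)$ by the $L^2$-projection $P_{H,k}^{\ms}(f)$ only alters the initial-position contribution to $\vartheta(0)$; since both projections approximate $f \in H^1_0(\Omega)$ in $L^2$ to order $H$ on $\Vmsk$, the bound on $\vartheta(0)$, and hence the whole estimate, is unchanged.
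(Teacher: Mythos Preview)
Your proposal is correct and follows essentially the same route as the paper: the well-prepared reduction via $f^{\eps}$ to obtain $\hatueps$ with $\eps$-uniform time regularity, the LOD projection estimate $\|(\pi_{H,\Omega}^{\ms}-I)v\|_{L^2}\lesssim H\|\nabla v\|_{L^2}$, and Baker's time-integrated test function to avoid needing $\partial_{tt}\varrho$ are exactly the ingredients the paper uses (Lemma~\ref{lemma-apriori-hom}, building on Lemmas~\ref{lemma-multiscale-projection} and~\ref{lemma-apriori-semidiscrete}). The only organizational difference is that the paper additionally introduces the auxiliary LOD solution $\hatumsk$ with initial datum $f^{\eps}$ and splits $\|\uhom-\uHk\|$ into four explicit pieces, whereas you compare $\uHk$ directly to $\hatueps$; note that your intermediate claim ``$\lim_{h\to 0}\|\hatueps-\uHk\|_{L^\infty(L^2)}\lesssim_T H$'' should really read $\lesssim_T H+e_{\mbox{\tiny\rm hom}}(\eps)$, since $\vartheta(0)=\pi_{H,\Omega}^{\ms}(f-f^{\eps})$ carries an $e_{\mbox{\tiny\rm hom}}(\eps)$ contribution (as you yourself indicate in parentheses), but this is harmless for the final estimates.
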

The theorem is proved in Section \ref{section:proofs}, where the dependencies on $\eps$ and $h$ are elaborated. 
In particular, the following sharpened result can be extracted from the proof of Theorem \ref{apriori-lod-homogenization}:
Let $g\in H^1_0(\Omega)$ and $\partial_t F \in L^2(0,T,L^2(\Omega))$. If $f=0$ the estimate in Theorem \ref{apriori-lod-homogenization} can be improved to
\begin{align}
\label{main-result-est-1-sharp}
\lim_{h\rightarrow 0}\| \ueps - \uHk \|_{L^{\infty}(L^2)} \lesssim_T H.
\end{align}
Estimate \eqref{main-result-est-1} 
guarantees convergence of our method 
with respect to the $L^{\infty}(L^2)$-error under the weakest possible assumptions in the general setting of $G$-convergence without any restrictions on the initial values. However, for some choices of the initial values, these estimates can be still improved significantly. This case is discussed in the next section.

\subsection{Convergence results for well-prepared initial data}
\label{subsection-well-prepared-initial-values}

In the previous section, we showed convergence of our method in the setting of $G$-convergence. We obtained a linear rate in $H$ for the $L^{\infty}(L^2)$-error. In this section, we show that this convergence can be improved for {\it well-prepared} initial values by using correctors from the kernel of the $L^2$-projection. In particular, we obtain $L^{\infty}(H^1)$- and $W^{1,\infty}(L^2)$-error estimates with respect to the exact solution $\ueps$. In a first step, we define what we mean by {\it well-prepared initial values} and why they are crucial for improved estimates. Consequently, we need to assume that $f$, $g$ and $F(\cdot,0)$ are $\eps$-dependent such that they interact constructively with $\aeps$ (in the sense specified in Proposition \ref{proposition-time-regularity} below). We note that in this section, $\eps$  is an abstract parameter and functions $z$ with superscript $\eps$  
are assumed to have a large  $\| z^\eps \|_{H^s(\Omega)}$ norm for $s>1$.
For the wave equation, this kind of blow-up cannot only be triggered by the spatial derivatives, but also by the time derivatives. 
Typically we have a large $\| \ueps \|_{W^{m,2}(0,T;H^s(\Omega))}$ norm when $m+s>1$ (in a homogenization context 
$\| \ueps \|_{W^{m,2}(0,T;H^s(\Omega))} \rightarrow \infty$ for $\eps \rightarrow 0$ when $m+s>1$, see e.g., \cite{CiD99}). However, this statement can be relaxed. More precisely, under certain assumptions on the initial data, it is possible to show that $\| \ueps \|_{W^{m,2}(0,T;H^1(\Omega))}$ remains bounded independent of $\eps$. To make this statement precise, we state the following regularity result.

\begin{proposition}[Time-regularity and regularity estimates]\label{proposition-time-regularity}
Let assumption (H0) be fulfilled and let $F \in W^{m,2}(0,T;L^2(\Omega))$ for some $m\in \mathbb{N}$. Furthermore we define iteratively 
\begin{align}
\label{higher-order-initial-values}w_0^{\eps}:=f, \qquad w_1^{\eps}:=g, \qquad w_j^{\eps} := \partial_t^{j-2} F(\cdot , 0) + \nabla \cdot ( \aeps \nabla w_{j-2}^{\eps}) \quad \mbox{for } j=2,3,\cdots,m+1. 
\end{align}
If $w_j^{\eps} \in H^1_0(\Omega)$ for $0\le j \le m$ and $w_{m+1}^{\eps}\in L^2(\Omega)$, we have
$$
\partial_t^{m} \ueps \in L^{\infty}(0,T;H^1_0(\Omega)); \enspace  \partial_t^{m+1} \ueps  \in L^{\infty}(0,T;L^2(\Omega))
\enspace \mbox{and} \enspace  \partial_t^{m+2} \ueps \in L^{2}(0,T;H^{-1}(\Omega))
$$
and the regularity estimate
\begin{eqnarray}
\label{reg-estimates}
\nonumber\lefteqn{\| \partial_t^{m} \ueps \|_{L^{\infty}(0,T;H^1(\Omega))} + \| \partial_t^{m+1} \ueps \|_{L^{\infty}(0,T;L^2(\Omega))}}\\
&\lesssim_T& \| F \|_{W^{m,2}(0,T;L^2(\Omega))} + \sum_{j=0}^m \| w_j^{\eps} \|_{H^1(\Omega)} +  \| w_{m+1}^{\eps} \|_{L^2(\Omega)}.
\end{eqnarray}
\end{proposition}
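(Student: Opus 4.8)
The plan is to prove this by energy estimates applied to the differentiated wave equation, using an induction-like argument on $m$ that is really just a careful bookkeeping of which quantities play the role of ``initial data'' for the $m$-times differentiated problem. The key observation is that if we formally differentiate the weak equation $\langle \partial_{tt}\ueps, v\rangle + \beps(\ueps, v) = (F, v)_{L^2}$ in time $m$ times, we obtain the same type of equation for $\partial_t^m \ueps$, namely $\langle \partial_{tt}(\partial_t^m\ueps), v\rangle + \beps(\partial_t^m\ueps, v) = (\partial_t^m F, v)_{L^2}$. Thus $\partial_t^m \ueps$ solves a wave equation with source $\partial_t^m F$, and the regularity $\partial_t^m\ueps \in L^\infty(H^1_0)$, $\partial_t^{m+1}\ueps \in L^\infty(L^2)$ is exactly the basic regularity statement from (H0) applied to this differentiated problem, provided we can identify its initial data and show they lie in the right spaces.

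\medskip

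First I would carefully derive the initial conditions for the differentiated equation, which is where the functions $w_j^\eps$ come from. The initial value of $\partial_t^m\ueps$ at $t=0$ should be $w_m^\eps$ and the initial value of $\partial_t^{m+1}\ueps$ should be $w_{m+1}^\eps$. To see this, I would argue recursively: from the equation itself, $\partial_{tt}\ueps = F + \nabla\cdot(\aeps\nabla\ueps)$ in the distributional sense, so evaluating at $t=0$ and using $\ueps(\cdot,0)=f=w_0^\eps$ gives $\partial_{tt}\ueps(\cdot,0) = F(\cdot,0)+\nabla\cdot(\aeps\nabla f) = w_2^\eps$. Differentiating the equation $j-2$ more times in time and evaluating at $t=0$ yields precisely the recursion $w_j^\eps = \partial_t^{j-2}F(\cdot,0) + \nabla\cdot(\aeps\nabla w_{j-2}^\eps)$, with the even/odd chains seeded by $f$ and $g$ respectively. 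The hypotheses $w_j^\eps\in H^1_0(\Omega)$ for $0\le j\le m$ and $w_{m+1}^\eps\in L^2(\Omega)$ are then exactly the conditions ``initial position in $H^1_0$, initial velocity in $L^2$'' needed to invoke the existence/regularity result cited after (H0) for the wave equation satisfied by $\partial_t^m\ueps$.

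\medskip

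To make this rigorous rather than formal, I would prove it by induction on $m$. The base case $m=0$ is the statement already recorded in the excerpt after (H0). For the inductive step, assuming $\partial_t^{m-1}\ueps\in L^\infty(H^1_0)$, $\partial_t^m\ueps\in L^\infty(L^2)$ with the corresponding estimate, I would show that $\partial_t^m\ueps$ is itself the weak solution of the wave equation with data $(w_m^\eps, w_{m+1}^\eps)$ and source $\partial_t^m F$; the required membership $w_m^\eps\in H^1_0$, $w_{m+1}^\eps\in L^2$ comes from the hypotheses, and applying the base-level regularity theorem to this problem upgrades $\partial_t^m\ueps$ to $L^\infty(H^1_0)$ and produces $\partial_t^{m+1}\ueps\in L^\infty(L^2)$, $\partial_t^{m+2}\ueps\in L^2(H^{-1})$. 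The regularity estimate (\ref{reg-estimates}) then follows from the standard a priori energy bound for the wave equation (the estimate $\|w(\cdot,t)\|_{H^1}+\|\partial_t w(\cdot,t)\|_{L^2}\lesssim_T \|\text{source}\|_{L^2(L^2)} + \|\text{initial position}\|_{H^1} + \|\text{initial velocity}\|_{L^2}$), which for the differentiated problem reads $\|\partial_t^m\ueps\|_{L^\infty(H^1)} + \|\partial_t^{m+1}\ueps\|_{L^\infty(L^2)} \lesssim_T \|\partial_t^m F\|_{L^2(L^2)} + \|w_m^\eps\|_{H^1} + \|w_{m+1}^\eps\|_{L^2}$, and then summing the analogous bounds for all orders up to $m$ collapses to the stated right-hand side.

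\medskip

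The main obstacle, and the step I would treat most carefully, is the rigorous justification that time-differentiation commutes with the weak formulation and that the pointwise-in-time evaluations defining $w_j^\eps$ at $t=0$ are meaningful — i.e. the trace-in-time steps. Because $\aeps$ is merely $L^\infty$ with no spatial regularity, the term $\nabla\cdot(\aeps\nabla w_{j-2}^\eps)$ must be interpreted in $H^{-1}$, and the hypothesis $w_j^\eps\in H^1_0$ silently asserts that this a priori $H^{-1}$ element is in fact in $L^2$; one must verify this is a genuine regularity assumption being imposed rather than derived. The standard remedy is to work with difference quotients in time (Galerkin/Faedo approximation) and pass to the limit, which simultaneously legitimizes the differentiated equation and the continuity in time $\partial_t^m\ueps\in C^0(H^1_0)$ needed to evaluate the trace at $t=0$; I would cite \cite{LiM72a} for the underlying linear wave-equation machinery and emphasize that all constants depend only on $T$, $\alpha$, $\beta$ and $\Omega$, so that the estimate is uniform in $\eps$ once the right-hand side of (\ref{reg-estimates}) is controlled.
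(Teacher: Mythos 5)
Your proposal is correct and follows essentially the same route as the paper, which does not write out a proof but refers to \cite[Chapter 7.2]{Eva10}: the argument there is exactly your induction on $m$, differentiating the (Galerkin-approximated) equation in time, identifying the initial data of the differentiated problem with $w_m^{\eps}$ and $w_{m+1}^{\eps}$ via the recursion, and applying the base-case energy estimate. Your remark that the hypotheses $w_j^{\eps}\in H^1_0(\Omega)$ and $w_{m+1}^{\eps}\in L^2(\Omega)$ are genuine assumptions (since $\nabla\cdot(\aeps\nabla w_{j-2}^{\eps})$ is a priori only in $H^{-1}(\Omega)$) correctly identifies the role of well-preparedness and is consistent with the paper's intent.
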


A proof of this proposition can be extracted from the results presented in \cite[Chapter 7.2]{Eva10}. From the regularity estimate \eqref{reg-estimates} we see that the higher order time derivatives of $\ueps$ might not be independent of $\eps$ (i.e. the data oscillations) if the initial values $f$, $g$ and $\partial_t^{m} F(\cdot,0)$ are not {\it well-prepared}, i.e. if the functions $w_j^{\eps}$ cannot be bounded independent of $\eps$. In the previous section, we learned that the method shows a convergence of linear order for the $L^{\infty}(L^2)$-error, even if $\| w_2^{\eps}\|_{L^2(\Omega)}$ grows with $\eps$ (not well-prepared). This should be seen as a worst-case result. In this section, we want to derive improved estimates for the case of {\it well-prepared} initial values. In the light of the regularity estimate, we hence formulate the following compatibility condition.

\begin{definition}[Compatibility condition and well-preparedness]
\label{def-of-comp-condition}
Let $k\in \mathbb{N}$ and let $w_0^{\eps},\ldots,w_{k+1}^{\eps}$ be the initial data defined in \eqref{higher-order-initial-values}. We say that the data is {\it well-prepared and compatible of order $k$} if $F \in W^{k,2}(0,T;L^2(\Omega))$, $w_j^{\eps} \in H^1_0(\Omega)$ for $0\le j \le k$ and $w_{k+1}^{\eps}\in L^2(\Omega)$ and if 
\begin{align}
\label{bound-well-prep}\sum_{j=0}^k \| w_j^{\eps} \|_{H^1(\Omega)} +  \| w_{k+1}^{\eps} \|_{L^2(\Omega)} \le C_w,
\end{align}
where $C_w$ denotes a (generic) constant that can depend on $F,f,g,\Omega,\alpha$ and $\beta$, but not on $\eps$.
\end{definition}

\begin{remark}[Fulfillment of compatibility and well-preparedness]
Observe that the initial data is always 
well-prepared and compatible of order $0$. Furthermore, if $f=0$, $g=0$ and $\partial_t^{j} F(\cdot , 0)=0$ for $0\le j \le k-2$, the compatibility condition of order $k$ is trivially fulfilled. For any other case, we note that $C_w$ in \eqref{bound-well-prep} is a computable constant, since $w_j^{\eps}$ are known functions. Consequently, we can check a priori if the initial value is compatible and well-prepared.
\end{remark}
Well-prepared initial data is often crucial in homogenization settings. For instance, for deriving a homogenized model that captures long-time dispersion in the multiscale wave equation, well-prepared initial values are essential \cite{Lam11b}. Similar observations can be made for parabolic problems with a large drift (cf. \cite{AlO07}) which also can be seen as having a hyperbolic character.

The next main result of this paper are high order convergence rates in $L^{\infty}(L^2)$, provided that we use the correctors $Q_{h,k}( \uHk )$ and that the data is {\it well-prepared and compatible}. Furthermore, we also show that the method yields convergence in $W^{1,\infty}(L^2)$ and $L^{\infty}(H^1)$.
For arbitrary initial values, we can only guarantee a convergence in $L^{\infty}(L^2)$ according to Theorem \ref{apriori-lod-homogenization}. In the following theorem, $\ueps$ denotes the exact solution of the wave equation (\ref{wave-equation-weak}) and $\uHk$ the numerically homogenized solution defined by (\ref{semi-discrete-lod-equation}).
\begin{theorem}[A priori corrector error estimates for the semi-discrete method]\label{apriori-semidiscrete}$\\$
Assume (H0) and that the data is {\rm well-prepared and compatible of order $2$} in the sense of Definition \ref{def-of-comp-condition}. Then 
there exists a generic constant $C_\theta$ (i.e. independent of $H$, $h$ and $\eps$) such that if
$k \ge C_{\theta} |\ln(H)|$
the following a priori error estimates hold:
\begin{align}
\label{main-result-est-2a}
\|  \ueps - (\uHk + Q_{h,k}(\uHk)) \|_{L^{\infty}(L^2)} \lesssim_T H^2 + e_{\mbox{\tiny \rm disc}}^{\mbox{\tiny$(1)$}}(h);
\end{align}
if $F \in L^{\infty}(H^1_0)\cap W^{1,1}(H^1_0)$ and if the data is {\rm well-prepared and compatible of order $3$}:
\begin{align}
\label{main-result-est-3a}
\| \ueps - ( \uHk + Q_{h,k}( \uHk )) \|_{L^{\infty}(L^2)} \lesssim_T H^3 + e_{\mbox{\tiny \rm disc}}^{\mbox{\tiny$(1)$}}(h);
\end{align}
if the data is {\rm well-prepared and compatible of order $3$}:
\begin{align}
\label{main-result-est-2b}
\| \partial_t\ueps - \partial_t (\uHk + Q_{h,k}(\uHk)) \|_{L^{\infty}(L^2)} 
+ \| \ueps -  (\uHk + Q_{h,k}( \uHk )) \|_{L^{\infty}(H^1)} \lesssim_T H + e_{\mbox{\tiny \rm disc}}^{\mbox{\tiny$(2)$}}(h);
\end{align}
if $F \in L^{\infty}(H^1_0)$, if the data is {\rm well-prepared and compatible of order $3$} and
if the initial value in (\ref{semi-discrete-lod-equation}) is picked such that we have $\partial_t (\uHk+Q_{h,k}(\uHk))( \cdot , 0 ) = \pi_{H,k}^{\ms}(g)$:
\begin{align}
\label{main-result-est-3b}
\| \partial_t\ueps - \partial_t ( \uHk + Q_{h,k}( \uHk )) \|_{L^{\infty}(L^2)} 
+ \| \ueps -  ( \uHk + Q_{h,k}( \uHk )) \|_{L^{\infty}(H^1)} \lesssim_T H^2 + e_{\mbox{\tiny \rm disc}}^{\mbox{\tiny$(2)$}}(h).
\end{align}
Here, the fine scale discretization errors $e_{\mbox{\tiny \rm disc}}^{\mbox{\tiny$(1)$}}(h)$ and $e_{\mbox{\tiny \rm disc}}^{\mbox{\tiny$(2)$}}(h)$ are given by
$$e_{\mbox{\tiny \rm disc}}^{\mbox{\tiny$(1)$}}(h) := \| \ueps - \pi_h(\ueps) \|_{L^{\infty}(L^2)} + \| \partial_t \ueps - \pi_h(\partial_t \ueps) \|_{L^1(L^2)}$$
and
$$
e_{\mbox{\tiny \rm disc}}^{\mbox{\tiny$(2)$}}(h) := \| \partial_t \ueps - \pi_h(\partial_t \ueps) \|_{L^{\infty}(L^2)} 
+ \| \partial_{tt} \ueps - \pi_h(\partial_{tt} \ueps) \|_{L^1(L^2)} +  \|\ueps - \pi_h(\ueps) \|_{L^{\infty}(H^1)},
$$
where $\pi_h$ is the elliptic-projection on $V_h$ (cf. (\ref{elliptic-projection-pi_h})). Note that $e_{\mbox{\tiny \rm disc}}^{\mbox{\tiny$(i)$}}(h)$ will only yield optimal orders in $h$, if $\ueps$ is sufficiently regular with respect to the spatial variable.
\end{theorem}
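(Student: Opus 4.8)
The plan is to recognize that $\hatuHk := \uHk + Q_{h,k}(\uHk)$ is simply a Galerkin approximation of the wave equation in the multiscale space $\Vmsk$. Rewriting \eqref{semi-discrete-lod-equation} with the corrected test functions $v_H+Q_{h,k}(v_H)$ shows that
\[
(\partial_{tt}\hatuHk\dott,\phi)_{L^2(\Omega)} + \beps(\hatuHk\dott,\phi) = (F\dott,\phi)_{L^2(\Omega)} \qquad \text{for all } \phi \in \Vmsk,
\]
and testing the weak form \eqref{wave-equation-weak} against $\phi\in\Vmsk\subset H^1_0(\Omega)$ and subtracting yields the Galerkin orthogonality $(\partial_{tt}(\ueps-\hatuHk),\phi)_{L^2(\Omega)}+\beps(\ueps-\hatuHk,\phi)=0$ for all $\phi\in\Vmsk$. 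Following the classical Ritz-projection technique for second-order hyperbolic problems, I would split the error with the elliptic multiscale projection $\pi_{H,k}^{\ms}$ of \eqref{elliptic-projection-pi_Hk}, writing $\ueps-\hatuHk = \eta + \zeta$ with $\eta := \ueps - \pi_{H,k}^{\ms}(\ueps)$ and $\zeta := \pi_{H,k}^{\ms}(\ueps) - \hatuHk \in \Vmsk$. Since $\pi_{H,k}^{\ms}$ is a bounded, time-independent linear projection, it commutes with $\partial_t$, so $\partial_t^j\eta = \partial_t^j\ueps - \pi_{H,k}^{\ms}(\partial_t^j\ueps)$.

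First I would estimate $\eta$ and its time derivatives in $L^2(\Omega)$ and in the energy norm, combining three ingredients. The localization error $\pi_{H,k}^{\ms}-\pi_{H,\Omega}^{\ms}$ decays like $k^{d/2}\theta^k$ (the estimate quoted after Definition \ref{definition:localized:ms:space}), so the choice $k\ge C_\theta|\ln(H)|$ renders it negligible against the $H$-powers and lets me work with the idealized projection $\pi_{H,\Omega}^{\ms}$ onto $\Vms$. For the latter I invoke the two fundamental LOD estimates: the energy bound $\|\nabla(w-\pi_{H,\Omega}^{\ms}(w))\|_{L^2(\Omega)}\lesssim H\|\psi\|_{L^2(\Omega)}$, and, via the $W_h$-Poincaré inequality $\|e\|_{L^2(\Omega)}\lesssim H\|\nabla e\|_{L^2(\Omega)}$ for $e\in W_h$ together with the $L^2$-orthogonality $V_H\perp W_h$, the sharpened bound $\|w-\pi_{H,\Omega}^{\ms}(w)\|_{L^2(\Omega)}\lesssim H^2\|\psi\|_{L^2(\Omega)}$, where $-\nabla\cdot(\aeps\nabla w)=\psi$. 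The remaining fine-scale gap $\ueps-\pi_h(\ueps)$ produces precisely the $e_{\mbox{\tiny \rm disc}}^{(i)}(h)$ terms. To make these bounds uniform in $\eps$, I read the elliptic right-hand side of $\partial_t^j\ueps$ off the wave equation, namely $-\nabla\cdot(\aeps\nabla\partial_t^j\ueps)=\partial_t^jF-\partial_t^{j+2}\ueps$, and apply Proposition \ref{proposition-time-regularity}: well-preparedness of order $2$ (resp.\ $3$) bounds $\partial_{tt}\ueps$ (resp.\ $\partial_t^{3}\ueps$) in $L^\infty(H^1_0)$ and $\partial_t^{3}\ueps$ (resp.\ $\partial_t^{4}\ueps$) in $L^\infty(L^2)$, independently of $\eps$.

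Next I would control $\zeta\in\Vmsk$. Inserting the split into the Galerkin orthogonality and using $\beps(\eta,\phi)=0$ for $\phi\in\Vmsk$ gives the perturbed discrete wave equation
\[
(\partial_{tt}\zeta\dott,\phi)_{L^2(\Omega)} + \beps(\zeta\dott,\phi) = -(\partial_{tt}\eta\dott,\phi)_{L^2(\Omega)} \qquad \text{for all } \phi\in\Vmsk.
\]
For the $W^{1,\infty}(L^2)$- and $L^\infty(H^1)$-estimates \eqref{main-result-est-2b}--\eqref{main-result-est-3b} I run the standard energy argument: testing with $\phi=\partial_t\zeta$, integrating in time and applying Gronwall yields
\[
\|\partial_t\zeta\|_{L^\infty(L^2)} + \|\nabla\zeta\|_{L^\infty(L^2)} \lesssim_T \|\partial_t\zeta(\cdot,0)\|_{L^2(\Omega)} + \|\nabla\zeta(\cdot,0)\|_{L^2(\Omega)} + \int_0^T\|\partial_{tt}\eta\|_{L^2(\Omega)}.
\]
The initial data enters exactly here: the prescription $\hatuHk(\cdot,0)=\pi_{H,k}^{\ms}(f)$ gives $\zeta(\cdot,0)=0$, while the velocity choice of \eqref{semi-discrete-lod-equation} gives $\partial_t\zeta(\cdot,0)=\pi_{H,k}^{\ms}(g)-P_{H,k}^{\ms}(g)$, a difference of an elliptic and an $L^2$ projection of $g$ that is of order $H$; choosing $\pi_{H,k}^{\ms}(g)$ for the velocity as in \eqref{main-result-est-3b} instead makes $\partial_t\zeta(\cdot,0)=0$ and removes this contribution. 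This is precisely what distinguishes \eqref{main-result-est-2b} from \eqref{main-result-est-3b}. Together with the $\eta$-bounds from the previous step this produces the stated $W^{1,\infty}(L^2)+L^\infty(H^1)$ rates.

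The main obstacle is the optimal $L^\infty(L^2)$ order in \eqref{main-result-est-2a} and \eqref{main-result-est-3a}: the energy estimate controls $\zeta$ only in the energy norm, and since $\zeta\in\Vmsk$ does \emph{not} lie in $W_h$, the gradient bound cannot be traded for an extra power of $H$ by Poincaré. I would therefore estimate $\|\zeta\|_{L^\infty(L^2)}$ by a duality/Baker-type argument --- testing the error equation against a backward-in-time integrated quantity (or the solution of an auxiliary elliptic dual problem) --- and crucially exploit the $L^2$-orthogonality $V_H\perp W_h$: writing $\phi=\phi_H+Q_{h,k}(\phi_H)$, the coarse component $(\partial_{tt}\eta,\phi_H)_{L^2(\Omega)}$ annihilates the $W_h$-part of $\eta$, so only the genuinely fine-scale contribution (absorbed into $e_{\mbox{\tiny \rm disc}}^{(1)}(h)$) survives, together with a corrector contribution that is of higher order because $\|Q_{h,k}(\phi_H)\|_{L^2(\Omega)}\lesssim H\|\nabla\phi_H\|_{L^2(\Omega)}$. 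Controlling the gradient factors introduced by this test function without losing the gain is the most delicate point of the whole proof, and it is here that the extra power of $H$ is earned. The improvement from $H^2$ to $H^3$ in \eqref{main-result-est-3a} then follows by raising the well-preparedness to order $3$ (so that $\partial_t^4\ueps$ is bounded in $L^\infty(L^2)$, sharpening $\|\partial_{tt}\eta\|_{L^2}$ to $H^2$ via the RHS-based $L^2$-bound) together with the extra regularity $F\in L^\infty(H^1_0)\cap W^{1,1}(H^1_0)$, which permits an additional integration by parts in time in the source $\int_0^T(\partial_{tt}\eta,\cdot)_{L^2(\Omega)}$. Finally the triangle inequality $\|\ueps-\hatuHk\|\le\|\eta\|+\|\zeta\|$ assembles the projection and discrete bounds into the four estimates.
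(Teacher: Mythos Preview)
Your overall architecture matches the paper exactly: rewrite $\umsk:=\uHk+Q_{h,k}(\uHk)$ as a Galerkin scheme in $\Vmsk$, split $e^{\ms}=\eta+\zeta$ with $\eta=\ueps-\pi_{H,k}^{\ms}(\ueps)$ and $\zeta=\pi_{H,k}^{\ms}(\ueps)-\umsk$, bound $\eta$ via the LOD projection estimates (the paper's Lemma~\ref{lemma-multiscale-projection}), and handle $\zeta$ by testing with $\partial_t\zeta$ for \eqref{main-result-est-2b}--\eqref{main-result-est-3b} and with Baker's integrated test function $v(\cdot,t)=\int_t^{t_0}\zeta(\cdot,s)\,ds$ for \eqref{main-result-est-2a}--\eqref{main-result-est-3a}. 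The treatment of the initial velocity term and the distinction between \eqref{main-result-est-2b} and \eqref{main-result-est-3b} is also correct.

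There is, however, a misattribution in your $L^\infty(L^2)$ paragraph. The Baker trick does \emph{not} leave you with a source $(\partial_{tt}\eta,\phi)$ that you then have to tame by splitting $\phi=\phi_H+Q_{h,k}(\phi_H)$ and invoking the $L^2$-orthogonality $V_H\perp W_h$. After the time integration by parts built into Baker's test function one obtains directly
\[
\|\zeta\|_{L^\infty(L^2)}\ \lesssim\ \|\partial_t\eta\|_{L^1(L^2)},
\]
with only a \emph{first} time derivative of $\eta$; the initial-velocity boundary term $(\partial_t e^{\ms}(\cdot,0),\int_0^{t_0}\zeta)$ vanishes because $g-P_{H,k}^{\ms}(g)\perp_{L^2}\Vmsk$. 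All the $H$-powers then come from the projection estimate $\|\partial_t\eta\|_{L^2}\lesssim H^{2+s}(\|\partial_t^{3}\ueps-\partial_t F\|_{H^s}+\|\partial_t\ueps\|_{H^1})$ of Lemma~\ref{lemma-multiscale-projection}, whose proof is where the $L^2$-orthogonality $V_H\perp W_h$ and the Aubin--Nitsche duality are actually used. In particular, no additional orthogonality argument inside the $\zeta$-equation is needed, and the passage from $H^2$ to $H^3$ in \eqref{main-result-est-3a} is not obtained by ``an additional integration by parts in time'' but by taking $s=1$ in that projection estimate, i.e.\ by exploiting $\partial_{tt}\ueps-F\in H^1_0$ (this is why the hypothesis $F\in L^\infty(H^1_0)\cap W^{1,1}(H^1_0)$ enters). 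Your regularity bookkeeping is consistent with this once you replace $\partial_{tt}\eta$ by $\partial_t\eta$ in the Baker step: well-preparedness of order $2$ suffices for $H^2$ because it bounds $\partial_t^3\ueps$ in $L^\infty(L^2)$, and order $3$ together with $F\in W^{1,1}(H^1_0)$ gives the $H^1$-control of $\partial_t^3\ueps-\partial_t F$ needed for $H^3$.
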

As for the $G-$convergence setting we have that if $g\in H^1_0(\Omega)$, $\partial_t F \in L^2(0,T,L^2(\Omega))$ and $f=0$, the estimate in Theorem \ref{apriori-lod-homogenization} can be improved to
\begin{align}
\label{main-result-est-1-sharp_bis}
\| \ueps - \uHk \|_{L^{\infty}(L^2)} \lesssim_T H + e_{\mbox{\tiny \rm disc}}^{\mbox{\tiny$(1)$}}(h).
\end{align}
The proof is analogous to the proof of Lemma \ref{lemma-apriori-hom} presented later.

A proof of Theorem \ref{apriori-semidiscrete} including refined estimates (i.e. estimates where all dependencies on $\ueps$ and $k$ are worked out in detail) is given in Section \ref{section:proofs}.

Our last main result is an optimal $L^{\infty}(L^2)$  error estimate for the Crank-Nicholson version of the multiscale method. In order to obtain the optimal convergence rates with regard to the time step size, we observe that we need slightly higher regularity assumptions than in the semi-discrete case.
\begin{theorem}[A priori error estimates for the Crank-Nicolson fully-discrete method]\label{apriori-fullydiscrete}$\\$
Assume (H0) and that the data is {\rm well-prepared and compatible of order $3$} in the sense of Definition \ref{def-of-comp-condition}. Beside this, let the notation from Theorem \ref{apriori-semidiscrete} hold true and let $u_{H,\triangle t}:=\uHtk$ be the fully discrete numerically homogenized approximation as in Definition \ref{definition-fully-discrete-method}. Then there exists a generic constant $C_\theta$ (i.e. independent of $H$, $h$ and $\eps$) such that if $k \ge C_{\theta} |\ln(H)|$ it holds
\begin{align}
\label{main-result-est-2-fully-discrete}\max_{0 \le n \le J} \| (\ueps - (u_{H,\triangle t} + Q_{h,k}(u_{H,\triangle t})))(\cdot, t^n ) \|_{L^2(\Omega)} \lesssim_T H^2 + \triangle t^2 + e_{\mbox{\tiny \rm disc}}(h).
\end{align}
If furthermore $F \in L^{\infty}(H^1_0)$ and $\partial_t F \in L^2(H^1_0)$, we obtain the improved corrector estimate
\begin{align}
\label{main-result-est-3-fully-discrete} \max_{0 \le n \le J}  \| (\ueps - (u_{H,\triangle t} + Q_{h,k}(u_{H,\triangle t})))(\cdot, t^n )  \|_{L^2(\Omega)} \lesssim_T H^3 + \triangle t^2 + e_{\mbox{\tiny \rm disc}}(h).
\end{align}
Here, the fine scale discretization error $e_{\mbox{\tiny \rm disc}}(h)$ is given by
$$e_{\mbox{\tiny \rm disc}}(h) := \| \ueps - \pi_h(\ueps) \|_{L^{\infty}(L^2)} + \| \partial_t \ueps - \pi_h(\partial_t \ueps) \|_{L^2(L^2)}.$$
The convergence rates in $h$ hence dependent on a higher order space regularity of $\ueps$.
\end{theorem}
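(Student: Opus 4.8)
The plan is to split the total error into a spatial (semi-discrete) part and a temporal part and treat the two independently. Since the corrector operator $(I+Q_{h,k})$ is linear, I would write, at each node $t^n$,
\begin{align*}
\ueps - (u_{H,\triangle t} + Q_{h,k}(u_{H,\triangle t})) = \bigl(\ueps - (\uHk + Q_{h,k}(\uHk))\bigr) + (I+Q_{h,k})\bigl(\uHk - u_{H,\triangle t}\bigr),
\end{align*}
and bound each summand in the $L^2(\Omega)$-norm separately. The first summand is exactly the semi-discrete corrector error already controlled by Theorem~\ref{apriori-semidiscrete}: estimate~(\ref{main-result-est-2a}) supplies the $H^2$ contribution under well-preparedness of order $2$, while (\ref{main-result-est-3a}) supplies the $H^3$ contribution once $F\in L^{\infty}(H^1_0)$ and $\partial_t F\in L^2(H^1_0)$ (the latter forcing $F\in W^{1,1}(H^1_0)$ on the bounded interval). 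Since $[0,T]$ is finite, $\|\cdot\|_{L^1(L^2)}\lesssim_T\|\cdot\|_{L^2(L^2)}$ and hence $e_{\mbox{\tiny \rm disc}}^{\mbox{\tiny$(1)$}}(h)\lesssim_T e_{\mbox{\tiny \rm disc}}(h)$, so the fine-scale part of this summand is absorbed into $e_{\mbox{\tiny \rm disc}}(h)$.

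Next I would reduce the temporal part to a purely algebraic estimate. By the definition of the corrected mass matrix, $\|(I+Q_{h,k})v_H\|_{L^2(\Omega)}^2 = (v_H,v_H)_{H,k} = c^\top M_k\, c$ for $v_H=\sum_i c_i\Phi_{z_i}$. Writing $\xi_k(t)$ for the coefficient vector of $\uHk$ and $\xi_k^{(n)}$ for the Crank--Nicolson iterates, the temporal error at $t^n$ is the $M_k$-energy norm of $\xi_k(t^n)-\xi_k^{(n)}$, so it suffices to prove the classical second-order bound
\begin{align*}
\max_{0\le n\le J}\; |\xi_k(t^n)-\xi_k^{(n)}|_{M_k} \;\lesssim_T\; \triangle t^2
\end{align*}
for the Crank--Nicolson ($\hat\beta=1/4$, $\hat\gamma=1/2$) discretization of $M_k\ddot\xi_k+S_k\xi_k=G_k$. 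Here I would use that the scheme matches the initial data exactly ($\xi_k^{(0)}=\bar{f}_k=\xi_k(0)$ and $\eta_k^{(0)}=\bar{g}_k=\dot\xi_k(0)$), so no initialization error is incurred, and that the scheme is unconditionally stable: in the unforced case it conserves a discrete energy built from the $M_k$-norm of the discrete velocity and the $S_k$-norm of the discrete displacement, and with $G_k$ present a discrete Grönwall argument controls the error in this energy norm. It is essential to phrase stability in the $M_k$/$S_k$ energy norm so that no factor of $\triangle t\,\|S_k\|$ is lost; since $M_k$ and $S_k$ are symmetric positive definite with spectral bounds uniform in $H,h,\eps$ (guaranteed by (H0) and by $k\ge C_\theta|\ln(H)|$, which keeps the localized correctors exponentially close to the ideal ones), all constants remain independent of the oscillations in $\aeps$.

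The consistency analysis then yields a truncation error of size $O(\triangle t^2)$ weighted by the third and fourth time derivatives of $\xi_k$, equivalently by $\|\partial_t^3\uHk\|$ and $\|\partial_t^4\uHk\|$ in suitable norms. The key point, and the main obstacle, is that these higher time derivatives must be bounded \emph{independently of $\eps$}. This is where well-preparedness of order $3$ enters: differentiating~(\ref{semi-discrete-lod-equation}) in time shows that $\partial_t^m\uHk$ solves the same type of problem driven by $\partial_t^m F$ and the differentiated initial data $w_j^{\eps}$, so by the Galerkin stability of~(\ref{semi-discrete-lod-equation}) together with Proposition~\ref{proposition-time-regularity} and the compatibility bound~(\ref{bound-well-prep}) these derivatives inherit the $\eps$-uniform bound $C_w$. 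Since time-differentiating the truncation error consumes one extra order of time regularity relative to the semi-discrete estimate, well-preparedness of order $3$ (rather than $2$) is needed; the improved $H^3$ rate then follows by feeding the sharper estimate~(\ref{main-result-est-3a}) into the first summand while the temporal $O(\triangle t^2)$ bound is unchanged. Collecting the two contributions together with $e_{\mbox{\tiny \rm disc}}(h)$ yields~(\ref{main-result-est-2-fully-discrete}) and~(\ref{main-result-est-3-fully-discrete}).
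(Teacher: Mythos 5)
Your decomposition into a semi-discrete error plus a pure time-discretization error is a genuinely different route from the paper, which instead follows Baker's technique and runs a single fully-discrete energy/duality argument directly against the elliptic projection $\pi_{H,k}^{\ms}(\ueps)$; in that argument the $O(\triangle t^2)$ truncation term comes out weighted by $\| \partial_{t}^3 \ueps \|_{L^2(L^2)} + \| \partial_{t}^4 \ueps \|_{L^2(L^2)}$, i.e.\ by time derivatives of the \emph{exact} solution, which are exactly what Proposition \ref{proposition-time-regularity} controls under well-preparedness of order $3$. This difference is not cosmetic: it is where your argument has a genuine gap. Your temporal error bound requires $\| \partial_t^3 \uHk \|$ and $\| \partial_t^4 \uHk \|$ (equivalently $\dddot{\xi}_k$, $\xi_k^{(4)}$ in the $M_k$-norm) to be bounded uniformly in $\eps$, $H$ \emph{and} $h$, and you assert this follows from ``Galerkin stability together with Proposition \ref{proposition-time-regularity} and \eqref{bound-well-prep}.'' Proposition \ref{proposition-time-regularity} is a statement about the continuous problem; to transfer it to the semi-discrete problem one must show that the \emph{discrete} initial data are well-prepared to the same order, and this fails (or at least is far from automatic) beyond the second derivative. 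Concretely, writing $u^{\ms}=\uHk+Q_{h,k}(\uHk)$, the second derivative at $t=0$ satisfies $(\partial_{tt}u^{\ms}(\cdot,0),v^{\ms})_{L^2(\Omega)}=(F(\cdot,0),v^{\ms})-\beps(\pi_{H,k}^{\ms}(f),v^{\ms})=(w_2^{\eps},v^{\ms})$ because the elliptic projection commutes with $\beps$, so $\|\partial_{tt}u^{\ms}(\cdot,0)\|_{L^2(\Omega)}\le\|w_2^{\eps}\|_{L^2(\Omega)}\le C_w$ and all is well. But the third derivative involves $\beps(P_{H,k}^{\ms}(g),v^{\ms})$, and the $L^2$-projection does \emph{not} commute with $\beps$: the residual term $\beps(P_{H,k}^{\ms}(g)-\pi_{H,k}^{\ms}(g),v^{\ms})$ is only controlled by $\|g\|_{H^1(\Omega)}\|v^{\ms}\|_{H^1(\Omega)}$, and converting this into an $L^2$-bound on $\partial_t^3 u^{\ms}(\cdot,0)$ costs an inverse inequality on the \emph{fine} mesh, i.e.\ a factor $h^{-1}$. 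The subsequent energy estimates for $\partial_t^3 u^{\ms}$ and $\partial_t^4 u^{\ms}$ therefore do not close with constants independent of $h$ (let alone of $\eps$).

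So as written the argument does not establish the $\triangle t^2$ term with uniform constants. To repair it along your lines you would either have to change the initialization of the velocity (e.g.\ use $\pi_{H,k}^{\ms}(g)$, which the paper only does for the improved $W^{1,\infty}(L^2)$/$L^{\infty}(H^1)$ estimate \eqref{main-result-est-3b}) and additionally prepare $\partial_{tt}u^{\ms}(\cdot,0)$, or abandon the splitting and compare $\uHtk+Q_{h,k}(\uHtk)$ directly with $\pi_{H,k}^{\ms}(\ueps)$ as in Baker's fully discrete analysis, which is what the paper does: there the only new terms beyond Lemma \ref{lemma-multiscale-projection} (applied via \eqref{projection-error-estimate-3-new-1}) are $\| (\ueps-\pi_{H,k}^{\ms}(\ueps))(\cdot,t^n)\|_{L^2(\Omega)}$, $\|\partial_t\ueps-\pi_{H,k}^{\ms}(\partial_t\ueps)\|_{L^2(L^2)}$, a vanishing initial term, and $\triangle t^2(\|\partial_t^3\ueps\|_{L^2(L^2)}+\|\partial_t^4\ueps\|_{L^2(L^2)})$, all of which are bounded by the stated hypotheses. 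The remainder of your proposal (the treatment of the spatial part via \eqref{main-result-est-2a}--\eqref{main-result-est-3a} and the absorption of $e_{\mbox{\tiny \rm disc}}^{\mbox{\tiny$(1)$}}(h)$ into $e_{\mbox{\tiny \rm disc}}(h)$) is fine.
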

The theorem is proved at the end of Section \ref{section:proofs}.
Note that if we are in a situation, where the data is not well-prepared (i.e. \eqref{bound-well-prep} does not hold) not only the mesh size needs to be small enough, but also the time step size $\triangle t$ requires a resolution condition such as $\triangle t \lesssim \eps$. This will become obvious in the proof of Theorem \ref{apriori-fullydiscrete}.

\subsection{Survey on other multiscale methods for the wave equation}
\label{subsection-surbey-multiscale-methods}

The number of existing multiscale methods for the wave equation is rather small, compared to the number of multiscale methods that exist for other types of equations. Subsequently we give a short survey on existing strategies to put our method into perspective. 

One way of realizing numerical homogenization is to use the framework of the Heterogeneous Multiscale Method (HMM) (cf. \cite{Abd05b,Abd06a,Abd11b,EE03,HeO09}). The method is based on the idea to predict an effective limit problem of (\ref{wave-equation-strong}) for $\eps \rightarrow 0$. This can be achieved by solving local problems in sampling cells (typically called cell problems) and to extract effective macroscopic properties from the corresponding cell solutions. In some cases it can be explicitly shown that this strategy in fact yields the correct limit problem for $\eps \rightarrow 0$. The central point of the method is that the cell problems are very small and systematically distributed in $\Omega$, but do not cover $\Omega$. This makes the method very cheap. For the wave equation, an HMM based on Finite Elements was proposed and analyzed in \cite{AbG11}. An HMM based on Finite Differences can be found in \cite{EHR11}. Since the classical homogenized model is known to fail to capture long time dispersive effects (cf. \cite{Lam11b}) another effective model is needed for longer times. Solutions for this problem by a suitable model adaptation in the HMM context can be found in \cite{AGS13,AGS14,EHR12}. The advantage of the HMM framework is that it allows to construct methods that do not have to resolve the fine scale globally, allowing for a computational cost proportional to the degrees of freedom of the macroscopic mesh.
But it requires scale separation and the cell problems must sample the microstructure sufficiently well.
In many applications, especially in material sciences, these assumptions are typically well justified, in geophysical applications on the other hand, they might be often problematic. In this work, we hence focus on the latter case, where the HMM might not be applicable.

Beside the multiscale character of the problem, one of the biggest issues is the typically missing space regularity of the solution. In realistic applications, the propagation field $\aeps$ is discontinuous. For instance in geophysics or seismology, the waves propagate through a medium that consists of different, heterogeneously distributed types of material (e.g. different soil or rock types). Hence, the properties of the propagation field cannot change continuously. This typically also involves a high contrast. The missing smoothness of $\aeps$ directly influences the space regularity of the solution $\ueps$ which is often not higher than $L^{\infty}(0,T;H^1(\Omega))$. As a consequence, the convergence rates of standard Finite Element methods deteriorate besides being very costly. 

To overcome these issues (multiscale character and missing regularity of $\ueps$), Owhadi and Zhang \cite{OwZ08} proposed an interesting multiscale method based on a harmonic coordinate transformation $G$. The method is only analyzed for $d=2$, but it is also applicable for higher dimensions. The components of $G=(G_1,\dots,G_d)$ are defined as the weak solutions of an elliptic boundary value problem $\nabla ( \aeps \nabla G_i ) = 0$ in $\Omega$ and $G_i(x)=x_i$ on $\partial \Omega$. Under a so called Cordes-type condition (cf. \cite[Condition 2.1]{OwZ08}) the authors managed to prove a compensation theorem saying that the solution in harmonic coordinates yields in fact the desired space-regularity. More precisely, they could show that $(\ueps \circ G) \in L^{\infty}(0,T;H^2(\Omega))$ and furthermore that 
$$ \| \ueps \circ G \|_{L^{\infty}(0,T;H^2(\Omega))} \le 
C(F,g) + C \| \partial_{tt} \ueps(\cdot ,0) \|_{L^2(\Omega)},
$$
where $C(F,g)$ and $C$ are constants depending on the data functions, but not on the variations of $\aeps$. Consequently, by using the equality $\partial_{tt} \ueps(\cdot ,0) = \nabla \cdot ( \aeps \nabla f ) - F(\cdot ,0)$, the $L^{\infty}(H^2)$-norm of $\ueps \circ G$ can be bounded {\it independently of the oscillations} of $\aeps$ if the choice of the initial value is such that $\| \nabla \cdot ( \aeps \nabla f ) \|_{L^2(\Omega)}$ can be bounded independent of $\eps$, i.e. if the initial value is well-prepared. Note that $\| \ueps \|_{L^{\infty}(0,T;H^2(\Omega))}$ (if it even exists) is normally proportional to the $W^{1,\infty}$-norm of $\aeps$ (if it exists), which is the reason why classical finite elements cannot converge unless this frequency is resolved by the mesh. The harmonically transformed solution of the wave equation does not suffer from this anymore. With this key feature, an adequate analysis (and corresponding numerics) can be performed in an harmonically transformed finite element space, allowing optimal orders of convergence. The method has only two drawbacks: the approximation of the harmonic coordinate transformation $G$ and the validity of the Cordes-type condition. Even though the Cordes-type condition can be hard to verify in practice, the numerical experiments given in \cite{OwZ08} indicate that the condition might not be necessary for a good behavior of the method. The approximation of the harmonic coordinate transformation $G$ on the other hand can become a real issue, since it involves the solution of $d$ global fine-scale problems. This is an expensive one-time overhead. Furthermore, spline spaces are needed and it is not clear how the analytically predicted results change, when $G$ is replaced by a numerical approximation $G_h$. Compared to \cite{OwZ08}, our method has therefore the advantage that it does not involve to solve global fine scale problems and relies on localized classical P1-finite element spaces.

Another multiscale method applicable to the wave equation was also presented by Owhadi and Zhang in \cite{OwZ11}. Here a multiscale basis is assembled by localizing a certain transfer property (which can be seen as an alternative to the aforementioned harmonic coordinate transformation). In this approach, the number of local problems to solve is basically the same as for our method. However, the local problems require finite element spaces consisting of certain $C^1$-continuous functions. Furthermore, the diameter of the localization patches must at least be of order $\sqrt{H} |\ln(H)|$ to guarantee an optimal linear convergence rate for the $H^1$-error, whereas our approach only requires $H |\ln(H)|$.

The Multiscale Finite Element Method using Limited Global Information 
by Jiang et al. \cite{JEG10,JiE12} can be seen as a general framework that also covers the harmonic coordinate transformation approach by Owhadi and Zhang. The central assumption for this method is the existence of a number of {\it known} global fields $G_1, \dots, G_N$ and an unknown smooth function $H=H(G_1,\dots,G_N,t)$ such that the error $e=\ueps - H(G_1,\dots,G_N,t)$ has a small energy. Based on the size of this energy, an a priori error analysis can be performed. The components of the harmonic coordinate transformation $G$ are an example for global fields that fit into the framework. Other (more heuristic) choices are possible (cf. \cite{JEG10,JiE12}), but equally expensive as computing the harmonic transformation $G$. The drawback of the method is hence the same as for the Owhadi-Zhang approach: the basic assumption on the existence of global fields can be hard to verify and even if it is known to be valid, there is an expensive one-time overhead in computing them with a global fine scale computation.

Excluding the HMM approach, we want to stress that each of the above multiscale methods is only guaranteed to converge in the regime $H>\eps$ if the data is well-prepared in the sense of Definition \ref{def-of-comp-condition}. There are no available results with respect to arbitrary initial data.

With regard to the previous discussions, our multiscale method proposed in Definition \ref{definition-fully-discrete-method} has the following benefits. The method does not require additional assumptions on scale separation or regularity of $\aeps$ and it does not involve one-time-overhead computations on the full fine scale. Furthermore, the method is guaranteed to converge even for not well-prepared initial values. On the other hand, if the initial-values are well-prepared, the method is independent of the homogenization setting and yields significantly improved convergence rates, even in $W^{1,\infty}(L^2)$ and $L^{\infty}(H^1)$, without exploiting any higher space regularity than $H^1$.

\section{Proofs of the main results}
\label{section:proofs}
This section is devoted to the proof of Theorem \ref{apriori-semidiscrete}. First, we derive some general error estimates in Subsection \ref{subsection-abstract-estimates}. In Subsection \ref{subsection-well-prepared-estimates} we analyze the case of well-prepared initial values,
and finally prove in Subsection \ref{subsection-G-conv-estimates} the convergence results without any assumption on the initial data but the one needed for the well-posedness of the wave equation.
\subsection{Abstract error estimates}\label{subsection-abstract-estimates}
Before we can start with proving the a priori error estimates, we present two lemmata. The first result can be found in \cite{Car99,MaP14}:
\begin{lemma}[Properties of the interpolation operator]
\label{lemma-properties-clement-op}
The interpolation operator $I_H : H^1_0(\Omega) \rightarrow V_H$ from (\ref{def-weighted-clement}) has the following properties:
\begin{align}
\label{lemma-properties-clement-op-est}\| v - I_H (v) \|_{L^2(\Omega)} + H \| v - I_H (v) \|_{H^1(\Omega)} \le C_{I_H} H \| v \|_{H^1(\Omega)},
\end{align}
for all $v \in H^1_0(\Omega)$. Here, $C_{I_H}$ denotes a generic constant, that only depends on the shape regularity of the elements of $\T_H$. Furthermore, the restriction $I_H\vert_{V_H} : V_H \rightarrow V_H$ is an isomorphism on $V_H$, with $(I_H\vert_{V_H})^{-1}$ being $H^1$-stable.
\end{lemma}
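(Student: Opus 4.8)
The estimate \eqref{lemma-properties-clement-op-est} is a weighted Clément-type quasi-interpolation bound, and my plan is to localize everything to the vertex patches $\omega_z := \operatorname{supp}\Phi_z$ and then combine a weighted Poincaré inequality with a finite-overlap summation. The starting observation is that for $v\in H^1_0(\Omega)$ the coefficient $v_z$ is exactly the $\Phi_z$-weighted mean of $v$ over $\omega_z$. Since $0\le\Phi_z\le 1$ with $\int_\Omega\Phi_z\simeq H^d$ and $\omega_z$ is connected and shape regular of diameter $\simeq H$, a weighted Poincaré inequality (replaced by a Friedrichs inequality for patches touching $\partial\Omega$, where $v|_{\partial\Omega}=0$ is used) yields the local bound $\|v - v_z\|_{L^2(\omega_z)}\lesssim H\,\|\nabla v\|_{L^2(\omega_z)}$, with a constant depending only on the shape regularity of $\T_H$.

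For the $L^2$-part I would use the partition of unity $\sum_{z\in\mathcal{N}_H}\Phi_z\equiv 1$ on interior elements to write, on each $K\in\T_H$, $v - I_H v = \sum_{z:\,K\subset\omega_z}(v - v_z)\Phi_z$, bound $\|v - I_H v\|_{L^2(K)}$ by $\sum_z \|v - v_z\|_{L^2(\omega_z)}$, and sum over $K$, invoking the bounded number of patches meeting each element and the bounded overlap of the $\omega_z$; this gives $\|v - I_H v\|_{L^2}\lesssim H\,\|\nabla v\|_{L^2}$. For the gradient I would exploit $\sum_z\nabla\Phi_z\equiv 0$ to subtract a local mean $\bar v_K$ (the average of $v$ over the enlarged patch $\tilde\omega_K=\bigcup\{\omega_z:K\subset\omega_z\}$), writing $\nabla(v - I_H v)=\nabla v-\sum_z (v_z-\bar v_K)\nabla\Phi_z$ on $K$, and then combine $\|\nabla\Phi_z\|_{L^2(K)}\lesssim H^{d/2-1}$ with $|v_z-\bar v_K|\lesssim H^{-d/2}\,\|v-\bar v_K\|_{L^2(\tilde\omega_K)}\lesssim H^{1-d/2}\|\nabla v\|_{L^2(\tilde\omega_K)}$. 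After the same overlap summation this produces $\|\nabla(v - I_H v)\|_{L^2}\lesssim\|\nabla v\|_{L^2}$; adding $H$ times this to the $L^2$-bound gives \eqref{lemma-properties-clement-op-est}.

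For the isomorphism claim I would work on the finite-dimensional space $V_H$, where it suffices to prove injectivity. If $I_H(v_H)=0$ for some $v_H\in V_H$, then every coefficient $v_z=(v_H,\Phi_z)_{L^2(\Omega)}/(1,\Phi_z)_{L^2(\Omega)}$ vanishes, so $(v_H,\Phi_z)_{L^2(\Omega)}=0$ for all $z\in\mathcal{N}_H$; as $\{\Phi_z\}$ is a basis of $V_H$ this forces $(v_H,w_H)_{L^2(\Omega)}=0$ for all $w_H\in V_H$, and the choice $w_H=v_H$ gives $v_H=0$. Hence $I_H|_{V_H}$ is an injective endomorphism of a finite-dimensional space, thus an isomorphism.

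The delicate point, which I expect to be the main obstacle, is the $H^1$-stability of $(I_H|_{V_H})^{-1}$ \emph{uniformly in $H$}, i.e. $\|\nabla v_H\|_{L^2}\lesssim\|\nabla I_H v_H\|_{L^2}$ for all $v_H\in V_H$ with an $H$-independent constant. The naive route, bounding $\|\nabla(v_H-I_H v_H)\|_{L^2}$ by an inverse inequality, only reproduces the forward stability $\|\nabla I_H v_H\|_{L^2}\lesssim\|\nabla v_H\|_{L^2}$ and fails for the reverse inequality, since the inverse estimate carries a wrong-signed $H^{-1}$ factor. Instead I would pass to the coefficient representation: with the consistent mass matrix $M_{ij}=(\Phi_{z_j},\Phi_{z_i})_{L^2(\Omega)}$ and the lumped matrix $D=\operatorname{diag}((1,\Phi_{z_i})_{L^2(\Omega)})$, the action of $I_H|_{V_H}$ on coefficient vectors is $\alpha\mapsto D^{-1}M\alpha$, so the claim reduces to showing that $D^{-1}M$ and its inverse $M^{-1}D$ are bounded uniformly in $H$ in the energy norm. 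The spectral equivalence $c\,\beta^\top D\beta\le\beta^\top M\beta\le C\,\beta^\top D\beta$, which holds with $H$-independent constants on the quasi-uniform mesh, delivers two-sided $L^2$-stability immediately; transferring this uniform well-conditioning to the $H^1$-seminorm is the technical heart, and it is exactly where quasi-uniformity and the nearest-neighbour locality of $D^{-1}M$ are needed. I would follow the local scaling argument of \cite{Car99,MaP14} for this step rather than reprove it from scratch.
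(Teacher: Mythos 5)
The paper does not prove this lemma at all: it is quoted verbatim from the references \cite{Car99,MaP14}, so there is no in-paper argument to compare against. Your sketch is the standard Cl\'ement-type proof from those references (weighted Poincar\'e/Friedrichs on vertex patches, partition of unity, finite-overlap summation, and injectivity on the finite-dimensional space $V_H$), and it is correct in outline; the one genuinely delicate step, the $H$-uniform $H^1$-stability of $(I_H\vert_{V_H})^{-1}$, you correctly identify and defer to the same sources the paper itself cites, which is consistent with how the paper treats the lemma.
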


Observe that $((I_H\vert_{V_H})^{-1} \circ I_H) \vert_{V_H} = \mbox{\rm Id}$. On the other hand $((I_H\vert_{V_H})^{-1} \circ I_H) \vert_{W_h} = 0$. Hence, for any $v_h=v_H +w_h \in V_h=V_H \oplus W_h$ with $v_H \in V_H$ and $w_h \in W_h$ we have $((I_H\vert_{V_H})^{-1} \circ I_H)(v_h)=v_H=P_H(v_h)$ and therefore
\begin{align}
\label{L2-projection-by-interpolation} ((I_H\vert_{V_H})^{-1} \circ I_H) \vert_{V_h} = P_H \vert_{V_h}.
\end{align}
Furthermore we have the equation
\begin{align}
\label{equation-for-projection}\pi_{H,k}^{\ms}(v) = (P_H \circ \pi_{H,k}^{\ms})(v) + (Q_{h,k} \circ P_H \circ \pi_{H,k}^{\ms})(v) \qquad \mbox{for all } v \in H^1_0(\Omega).
\end{align}
The next proposition quantifies the decay of the local correctors.
\begin{proposition}
\label{proposition-stability-estimates}
Let assumptions (H0) be fulfilled and the corrector operators defined according to Definition \ref{definition:localized:ms:space} for some $k \in \mathbb{N}_{>0}$. Then there exists a generic constant $0<\theta<1$ (independent of $H$, $h$ or $\eps$) such that
we have the estimate
\begin{align}
\label{decay-of-correctors}\| \nabla (Q_{h,k} - Q_{h,\Omega})(v_H) \|_{L^2(\Omega)} \lesssim k^{d/2}
\theta^{k} \| \nabla v_H \|_{L^2(\Omega)}.
\end{align}
for all $v_H \in V_H$.
Furthermore, the operator $Q_{h,k}$ is $H^1$-stable on $V_H$ and the operator $(P_{H} \circ \pi_{H,k}^{\ms})$ is $H^1$-stable on $H^1_0(\Omega)$, i.e.
\begin{align}
\label{stability-estimates-for-various-ops}\forall v_H \in V_H: \quad  \hspace{29pt} \| Q_{h,k}(v_H) \|_{H^1(\Omega)} &\lesssim \| v_H \|_{H^1(\Omega)}
\quad \mbox{and}\\
\nonumber \forall v \in H^1_0(\Omega): \quad \| (P_{H} \circ \pi_{H,k}^{\ms})(v) \|_{H^1(\Omega)} &\lesssim \| v \|_{H^1(\Omega)}.
\end{align}
\end{proposition}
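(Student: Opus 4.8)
The statement has three parts: the exponential decay estimate \eqref{decay-of-correctors}, the $H^1$-stability of $Q_{h,k}$ on $V_H$, and the $H^1$-stability of $(P_H \circ \pi_{H,k}^{\ms})$ on $H^1_0(\Omega)$. My plan is to treat them in this order, since the later parts will lean on the earlier ones.

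\textbf{The decay estimate.} This is the technical heart of the proposition and the step I expect to be the main obstacle. The strategy is the standard exponential-decay argument for LOD correctors (as in \cite{MaP14,HeM14}), adapted to the element-wise decomposition $Q_{h,\Omega} = \sum_{K} Q_{h,\Omega}^{K}$. First I would fix $v_H \in V_H$ and a coarse element $K$, and compare the global corrector $Q_{h,\Omega}^{K}(v_H) \in W_h$ with its localized counterpart $Q_{h,k}^{K}(v_H) \in W_h(U_k(K))$. The core tool is a cut-off function $\eta \in V_H$ that equals $0$ on $U_{k-1}(K)$ and $1$ outside $U_k(K)$, whose coarse gradient is bounded by $H^{-1}$. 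The key algebraic step is to insert $\eta$ into the Galerkin orthogonality satisfied by the difference $Q_{h,\Omega}^{K}(v_H) - Q_{h,k}^{K}(v_H)$: since the source term $\int_K \aeps \nabla v_H \cdot \nabla w_h$ is supported in $K$, testing the defining equations against a suitable $W_h$-element built from $\eta$ and the corrector itself shows that the energy of the corrector on the annulus $U_k(K) \setminus U_{k-1}(K)$ controls its energy further out. This yields a one-step decay inequality of the form (energy outside $U_k$) $\lesssim \theta_0 \cdot$ (energy outside $U_{k-1}$), with $\theta_0 < 1$ depending only on $\alpha,\beta$ and the shape regularity. Iterating over the $k$ layers gives the factor $\theta^k$. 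The subtlety that must be handled carefully is that multiplying a $W_h$-function by the cut-off $\eta$ does \emph{not} in general stay in $W_h = \mathrm{Ker}(I_H|_{V_h})$, so one has to correct $\eta w_h$ back into $W_h$ using the interpolation operator $I_H$ and its error estimate \eqref{lemma-properties-clement-op-est} from Lemma \ref{lemma-properties-clement-op}; this is where the constants and the precise form of $I_H$ enter. Finally, summing the element-wise estimates over all $K \in \T_H$ produces the polynomial prefactor $k^{d/2}$, because each fine point lies in only $\mathcal{O}(k^d)$ overlapping patches, so a Cauchy--Schwarz / finite-overlap argument converts the sum of squared local contributions into the global $L^2$-norm with that factor.

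\textbf{$H^1$-stability of $Q_{h,k}$.} This follows relatively cheaply once the decay estimate is available. The idea is to first establish stability of the ideal corrector $Q_{h,\Omega}$: testing its defining equation \eqref{projection-orthogonal} with $w_h = Q_{h,\Omega}(v_H) \in W_h$ and using the spectral bounds $\alpha,\beta$ of $\aeps$ gives $\alpha \|\nabla Q_{h,\Omega}(v_H)\|_{L^2}^2 \le \beps(Q_{h,\Omega}(v_H),Q_{h,\Omega}(v_H)) = -\beps(v_H, Q_{h,\Omega}(v_H)) \le \beta \|\nabla v_H\|_{L^2}\|\nabla Q_{h,\Omega}(v_H)\|_{L^2}$, hence $\|\nabla Q_{h,\Omega}(v_H)\|_{L^2} \le (\beta/\alpha)\|\nabla v_H\|_{L^2}$. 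Then I would write $Q_{h,k} = Q_{h,\Omega} + (Q_{h,k}-Q_{h,\Omega})$, bound the first term as above and the second by \eqref{decay-of-correctors}, and control the full $H^1$-norm using the Poincar\'e inequality on $H^1_0(\Omega)$ to pass from the gradient seminorm to the norm.

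\textbf{$H^1$-stability of $P_H \circ \pi_{H,k}^{\ms}$.} For this last part I would use the decomposition \eqref{equation-for-projection}, which reads $\pi_{H,k}^{\ms}(v) = (P_H \circ \pi_{H,k}^{\ms})(v) + (Q_{h,k}\circ P_H \circ \pi_{H,k}^{\ms})(v)$, exhibiting $P_H\pi_{H,k}^{\ms}(v) \in V_H$ as the $V_H$-component of the multiscale projection. First I would establish the energy stability of $\pi_{H,k}^{\ms}$ itself: since it is the $\beps$-orthogonal projection onto $\Vmsk$, we get $\|\nabla \pi_{H,k}^{\ms}(v)\|_{L^2} \lesssim \|\nabla v\|_{L^2}$ directly from the spectral bounds. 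The remaining task is to recover an $H^1$-bound on the coarse part $w_H := P_H \pi_{H,k}^{\ms}(v) \in V_H$ from the bound on $w_H + Q_{h,k}(w_H) = \pi_{H,k}^{\ms}(v)$. Here the plan is to invoke the $L^2$-orthogonality $V_H \perp W_h$ guaranteed by the weighted Cl\'ement operator, which gives $I_H(w_H + Q_{h,k}(w_H)) = I_H(w_H)$ since $Q_{h,k}(w_H)\in W_h$, and then use the isomorphism property of $I_H|_{V_H}$ with its $H^1$-stable inverse from Lemma \ref{lemma-properties-clement-op}. Concretely, $w_H = (I_H|_{V_H})^{-1} I_H(\pi_{H,k}^{\ms}(v))$, and applying $H^1$-stability of the inverse together with the interpolation estimate \eqref{lemma-properties-clement-op-est} on $\pi_{H,k}^{\ms}(v)$ yields $\|w_H\|_{H^1} \lesssim \|\pi_{H,k}^{\ms}(v)\|_{H^1} \lesssim \|v\|_{H^1}$, which is the claim. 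The only point requiring care is justifying that the $H^1$-stable inverse interacts with $I_H$ to give the clean bound without picking up mesh-size-dependent constants, which is exactly what the isomorphism statement in Lemma \ref{lemma-properties-clement-op} provides.
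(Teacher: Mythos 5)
Your proposal is correct, and the overall architecture matches the paper's: the decay estimate \eqref{decay-of-correctors} is the foundation, the $H^1$-stability of $Q_{h,k}$ follows from the (obvious) energy stability of the ideal corrector $Q_{h,\Omega}$ plus the decay bound and a triangle inequality, and the stability of $P_H\circ\pi_{H,k}^{\ms}$ reduces to the energy stability of $\pi_{H,k}^{\ms}$ combined with $H^1$-stability of $P_H$ on $V_h$. Two points of comparison are worth recording. First, for \eqref{decay-of-correctors} the paper does not reprove anything but simply cites \cite{HeM14}; your cut-off/annulus iteration with the $I_H$-recorrection of $\eta w_h$ and the finite-overlap argument producing the $k^{d/2}$ prefactor is precisely the argument carried out in that reference, so you are supplying detail the paper outsources rather than diverging from it. Second, for the $H^1$-stability of $P_H$ the routes genuinely differ: the paper invokes the $H^1$-stability of the $L^2$-projection on quasi-uniform meshes as an external result \cite{BaD81,BaY14}, whereas you derive it internally from the factorization $P_H\vert_{V_h}=(I_H\vert_{V_H})^{-1}\circ I_H$ (the paper's own identity \eqref{L2-projection-by-interpolation}) together with the $H^1$-stability of $I_H$ from \eqref{lemma-properties-clement-op-est} and of $(I_H\vert_{V_H})^{-1}$ from Lemma \ref{lemma-properties-clement-op}. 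Your version is more self-contained given the tools already stated in the paper, at the price of leaning on the isomorphism property of $I_H\vert_{V_H}$ (itself a cited fact from \cite{Car99,MaP14}); the paper's version leans instead on the Bank--Dupont/Bank--Yserentant result. Both are valid, and the constants in either case depend only on shape regularity and quasi-uniformity of $\T_H$, as required.
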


\begin{proof}
Estimate (\ref{decay-of-correctors}) was proved in \cite{HeM14}. It is hence sufficient to show \eqref{stability-estimates-for-various-ops}. Since $Q_{h,\Omega}$ is obviously $H^1$-stable on $V_H$ and since $k^{d/2}\theta^k$ is monotonically decreasing for growing $k$, the $H^1$-stability of $Q_{h,k}$ follows directly from (\ref{decay-of-correctors}). The elliptic projection $\pi_{H,k}^{\ms}$ is also obviously $H^1$-stable. 
Finally, the $H^1$-stability of the $L^2$-projection $P_H$ on quasi-uniform meshes (as assumed for $\T_H$) was e.g. proved in \cite{BaD81,BaY14}. Combining these results gives us the desired $H^1$-stability of $P_{H} \circ \pi_{H,k}^{\ms}$ on $H^1_0(\Omega)$.
\end{proof}

The next lemma gives explicit error estimates for the elliptic projections on $\Vmsk$.

\begin{lemma}
\label{lemma-multiscale-projection}
Let $\ueps$ be the solution of (\ref{wave-equation-weak}) and let the corrector operator $Q_{h,k}$ be given as in Definition \ref{definition:localized:ms:space} for some $k \in \mathbb{N}_{>0}$. Furthermore, let $\pi_{H,k}^{\ms}$ and $\pi_h$ denote the elliptic projections according to (\ref{elliptic-projection-pi_Hk}) and (\ref{elliptic-projection-pi_h}). We further denote the $L^2$-projection of $V_h$ on $V_H$ by $P_H$. The following estimates hold for almost every $t \in [0,T]$.\\
If $\partial_t^i \ueps \in L^1(0,T;H^1(\Omega))$ for $i \in \{0,1,2\}$, then it holds
\begin{eqnarray}
\label{projection-error-estimate-1}\lefteqn{\| (P_H \circ \pi_{H,k}^{\ms})(\partial_t^i \ueps(\cdot,t)) - \partial_t^i \ueps(\cdot,t) \|_{L^2(\Omega)}} \\ 
\nonumber&\lesssim& \| \partial_t^i \ueps(\cdot,t) - \pi_h(\partial_t^i \ueps(\cdot,t)) \|_{L^2(\Omega)} + (H + \theta^k k^{d/2}) \| \partial_t^i \ueps(\cdot,t) \|_{H^1(\Omega)}.
\end{eqnarray}
Assume that $i \in \{0,1,2\}$ and $s,m \in \{0,1\}$. If $\partial_t^i \ueps \in L^1(0,T;H^1(\Omega))$ and $\partial_t^{2+i}  \ueps, \partial_t^{i} F \in L^1(0,T;H^s(\Omega))$ it holds
\begin{eqnarray}
\label{projection-error-estimate-3-new-1}\lefteqn{\| \pi_{H,k}^{\ms}(\partial_t^i \ueps(\cdot,t)) - \partial_t^i \ueps(\cdot,t) \|_{H^m(\Omega)} \lesssim \| \partial_t^i \ueps (\cdot,t)- \pi_h(\partial_t^i \ueps(\cdot,t)) \|_{H^m(\Omega)} }\\
\nonumber&\enspace& \enspace + \left(H^{2+s-m} + 
\left(k^{d/2} \theta^{k}\right)^{p(s,m)}
\right) \left( \| \partial_t^{2+i}  \ueps(\cdot,t) -  \partial_t^{i} F(\cdot,t) \|_{H^s(\Omega)} + \|  \partial_t^{i} \ueps(\cdot,t) \|_{H^1(\Omega)} \right),
\end{eqnarray}
where
$p(s,m):= \left(\frac{s+2}{s+1}\right)^{1-m}\ge 1$.

\end{lemma}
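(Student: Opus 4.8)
The plan is to prove both estimates by inserting the fine-scale elliptic projection $\pi_h$ and the \emph{ideal} (non-localized) multiscale projection $\pi_{H,\Omega}^{\ms}$ as intermediate quantities, so that the total error telescopes into a fine-scale part, an ideal coarse part lying in $W_h$, and a localization part controlled by the corrector decay \eqref{decay-of-correctors}. At a fixed $t$ I would treat $v:=\partial_t^i\ueps(\cdot,t)\in H^1_0(\Omega)$ as a generic function: differentiating the wave equation \eqref{wave-equation-weak} $i$ times in $t$ (legitimate since $\aeps$ is time-independent, so $\beps$ and all the projections commute with $\partial_t$) gives $-\nabla\cdot(\aeps\nabla v)=\partial_t^i F-\partial_t^{2+i}\ueps=:f$, i.e.\ $\beps(v,w)=(f,w)_{L^2(\Omega)}$ for all $w\in H^1_0(\Omega)$. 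This is exactly the source of the norm $\|\partial_t^{2+i}\ueps-\partial_t^i F\|_{H^s}=\|f\|_{H^s}$ appearing on the right-hand side.

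For \eqref{projection-error-estimate-1} the key observation is the clean identity $P_H\pi_{H,\Omega}^{\ms}(v)=P_H\pi_h(v)$. This holds because $\pi_{H,\Omega}^{\ms}(v)=\pi_{H,\Omega}^{\ms}(\pi_h v)$ (both sides solve the same equation on $\Vms\subset V_h$, by the definition of $\pi_h$), and because the $\beps$-orthogonal splitting $V_h=\Vms\oplus W_h$ writes $\pi_h v=\pi_{H,\Omega}^{\ms}(\pi_h v)+w_h$ with $w_h\in W_h=\mbox{Ker}(P_H|_{V_h})$; applying $P_H$ and using $P_H w_h=0$ gives the claim. I would then write
\[
 P_H\pi_{H,k}^{\ms}(v)-v = P_H\bigl(\pi_{H,k}^{\ms}(v)-\pi_{H,\Omega}^{\ms}(v)\bigr) + P_H(\pi_h v - v) + (P_H v - v),
\]
bounding the first term by $\|\pi_{H,k}^{\ms}(v)-\pi_{H,\Omega}^{\ms}(v)\|_{L^2(\Omega)}\lesssim k^{d/2}\theta^k\|v\|_{H^1(\Omega)}$ via Poincar\'e and \eqref{decay-of-correctors}, the second by $\|\pi_h v - v\|_{L^2(\Omega)}$ since $P_H$ has $L^2$-norm $\le 1$, and the last by the standard estimate $\|P_H v - v\|_{L^2(\Omega)}\lesssim H\|v\|_{H^1(\Omega)}$. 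This reproduces \eqref{projection-error-estimate-1}.

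For \eqref{projection-error-estimate-3-new-1} I would telescope
\[
 \pi_{H,k}^{\ms}(v)-v = \bigl(\pi_{H,k}^{\ms}(v)-\pi_{H,\Omega}^{\ms}(v)\bigr) + \bigl(\pi_{H,\Omega}^{\ms}(v)-\pi_h v\bigr) + \bigl(\pi_h v - v\bigr).
\]
The last term is precisely the fine-scale summand. The middle term is the ideal LOD error: $w_h:=\pi_h v-\pi_{H,\Omega}^{\ms}(v)$ is the $W_h$-component of $\pi_h v$ in the splitting $V_h=\Vms\oplus W_h$. Here the orthogonality $\Vms\perp_{\beps}W_h$ and the definition of $\pi_h$ give $\alpha\|\nabla w_h\|_{L^2(\Omega)}^2\le\beps(w_h,w_h)=\beps(v,w_h)=(f,w_h)_{L^2(\Omega)}$, and I would then exploit the $L^2$-orthogonality $W_h\perp_{L^2}V_H$ (the feature provided by the weighted Cl\'ement operator) to replace $f$ by $f-P_H f$, so that $(f,w_h)=(f-P_H f,w_h)\lesssim H^s\|f\|_{H^s}\|w_h\|_{L^2(\Omega)}$. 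Combined with $\|w_h\|_{L^2(\Omega)}\lesssim H\|\nabla w_h\|_{L^2(\Omega)}$ from Lemma \ref{lemma-properties-clement-op} (valid since $I_H w_h=0$), this yields $\|\nabla w_h\|_{L^2(\Omega)}\lesssim H^{s+1}\|f\|_{H^s}$ and hence $\|w_h\|_{H^m(\Omega)}\lesssim H^{s+2-m}\|f\|_{H^s}$, the desired $H^{2+s-m}$ contribution.

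The remaining and most delicate term is the localization error $\pi_{H,k}^{\ms}(v)-\pi_{H,\Omega}^{\ms}(v)$. In the energy norm ($m=1$) it is controlled directly by \eqref{decay-of-correctors} together with the $H^1$-stability of Proposition \ref{proposition-stability-estimates}, producing the factor $k^{d/2}\theta^k=(k^{d/2}\theta^k)^{p(s,1)}$ (recall $p(s,1)=1$). For the $L^2$ norm ($m=0$) I would run an Aubin--Nitsche duality against the solution $z_g$ of $-\nabla\cdot(\aeps\nabla z_g)=g$, rewriting $(\pi_{H,k}^{\ms}(v)-\pi_{H,\Omega}^{\ms}(v),g)_{L^2(\Omega)}$ through the Galerkin orthogonalities of $\pi_{H,\Omega}^{\ms}$ and $\pi_{H,k}^{\ms}$ on $\Vms$ and $\Vmsk$. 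I expect this bookkeeping to be the main obstacle: the naive duality leaves cross terms $\beps(\delta_z,\pi_{H,\Omega}^{\ms}v)$ between the dual localization error $\delta_z$ and the primal ideal part that are only first order in $k^{d/2}\theta^k$ and must be regrouped/absorbed. The precise exponent $p(s,0)=\frac{s+2}{s+1}$ is then expected to follow from interpolating between the energy estimate (power $1$) and the pure $L^2$-data estimate ($s=0$, power $2$), giving $p(s,0)\in\{2,\tfrac32\}$ for $s\in\{0,1\}$; in any case $p(s,m)\ge1$, which is all that is needed, since for $k\gtrsim|\ln(H)|$ the factor $(k^{d/2}\theta^k)^{p(s,m)}$ is dominated by $H^{2+s-m}$. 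Finally, collecting the localization contribution (which scales with $\|v\|_{H^1(\Omega)}=\|\partial_t^i\ueps\|_{H^1(\Omega)}$) and the ideal contribution (which scales with $\|f\|_{H^s}$) into the common factor $\|\partial_t^{2+i}\ueps-\partial_t^i F\|_{H^s}+\|\partial_t^i\ueps\|_{H^1(\Omega)}$, and adding the fine-scale term, gives \eqref{projection-error-estimate-3-new-1}.
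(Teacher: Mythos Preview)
Your overall strategy is sound and, for \eqref{projection-error-estimate-1}, genuinely different from the paper. The paper obtains \eqref{projection-error-estimate-1} by an Aubin--Nitsche duality applied directly to $e_{H,k}:=\pi_{H,k}^{\ms}(v)-\pi_h(v)$, deriving $\|e_{H,k}\|_{L^2}\lesssim(H+k^{d/2}\theta^k)\|e_{H,k}\|_{H^1}$ and then splitting off the corrector part. Your route via the identity $P_H\pi_{H,\Omega}^{\ms}(v)=P_H\pi_h(v)$ is more elementary and avoids duality altogether; what it buys is a cleaner separation of the $H$--term (from $P_H v-v$), the fine-scale term (from $P_H(\pi_h v-v)$), and the localization term.

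However, your justification of the localization bound is too quick. Estimate \eqref{decay-of-correctors} controls $(Q_{h,k}-Q_{h,\Omega})(v_H)$ for a \emph{fixed} $v_H\in V_H$, whereas $\pi_{H,k}^{\ms}(v)$ and $\pi_{H,\Omega}^{\ms}(v)$ have \emph{different} $V_H$--components $v_{H,k}\neq v_{H,\Omega}$. The claim $\|\pi_{H,k}^{\ms}(v)-\pi_{H,\Omega}^{\ms}(v)\|_{H^1}\lesssim k^{d/2}\theta^k\|v\|_{H^1}$ is nonetheless correct: split $\pi_h v=\pi_{H,\Omega}^{\ms}(v)+w_h$ with $w_h\in W_h$, so that by linearity $\pi_{H,k}^{\ms}(v)-\pi_{H,\Omega}^{\ms}(v)=[\pi_{H,k}^{\ms}(\pi_{H,\Omega}^{\ms} v)-\pi_{H,\Omega}^{\ms} v]+\pi_{H,k}^{\ms}(w_h)$. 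The first bracket is controlled by C\'ea on $\Vmsk$ with test element $v_{H,\Omega}+Q_{h,k}(v_{H,\Omega})$, giving $\|(Q_{h,\Omega}-Q_{h,k})(v_{H,\Omega})\|_{H^1}$; for the second, use $W_h\perp_{\beps}\Vms$ to rewrite $\beps(w_h,w_H+Q_{h,k}(w_H))=\beps(w_h,(Q_{h,k}-Q_{h,\Omega})(w_H))$ and test with $w_H+Q_{h,k}(w_H)=\pi_{H,k}^{\ms}(w_h)$. Both pieces then yield the factor $k^{d/2}\theta^k$, but this is a genuine argument that should be spelled out rather than attributed to ``Poincar\'e and \eqref{decay-of-correctors}''.

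For \eqref{projection-error-estimate-3-new-1} with $m=1$ your treatment of the ideal part $w_h$ is essentially the paper's (the paper writes the key step as the identity $(f,w_h)=(f-I_H f,\,w_h-I_H w_h)$, which is equivalent to your $L^2$--orthogonality trick). For $m=0$, the bookkeeping you anticipate is real but avoidable: the paper runs Aubin--Nitsche not on $\delta:=\pi_{H,k}^{\ms}(v)-\pi_{H,\Omega}^{\ms}(v)$ but on $e_{H,k}=\pi_{H,k}^{\ms}(v)-\pi_h(v)$, which enjoys the clean Galerkin orthogonality $\beps(e_{H,k},\Vmsk)=0$. This yields $\|e_{H,k}\|_{L^2}\lesssim(H+k^{d/2}\theta^k)\|e_{H,k}\|_{H^1}$, and multiplying by the already-established $m=1$ bound $\|e_{H,k}\|_{H^1}\lesssim H^{s+1}\|f\|_{H^s}+k^{d/2}\theta^k\|v\|_{H^1}$ produces, after a single Young inequality, the exponent $p(s,0)=\tfrac{s+2}{s+1}$ without any cross-term regrouping. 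Your duality on $\delta$ can be made to work (the cross term becomes $\beps(w_h,(Q_{h,k}-Q_{h,\Omega})(z_{H,k}))$, which is of the right order), but it is strictly more laborious.
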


\begin{proof}
{\it Error estimate under low regularity assumptions - (\ref{projection-error-estimate-1}).} We use an Aubin-Nitsche duality argument for some arbitrary $v \in L^1(0,T;H^1_0(\Omega))$. Let us define $e_{H,k}:= \pi_{H,k}^{\ms}(v) - \pi_h(v)$. We regard the dual problem: find $z_h \in L^1(0,T;V_h)$ with
\begin{align}
\label{dual-prob-1}\beps(w_h ,z_h\dott ) = (e_{H,k}\dott ,w_h)_{L^2(\Omega)} \qquad \mbox{for all } w_h \in V_h, \quad \mbox{for a.e. } t \in (0,T)
\end{align}
and the dual problem in the multiscale space: find $z_{H,k}^{\ms} \in \Vmsk$ with
\begin{align}
\label{dual-prob-2}\beps(w^{\ms},z_{H,k}^{\ms}\dott ) = (e_{H,k} \dott,w^{\ms})_{L^2(\Omega)} \qquad \mbox{for all } w^{\ms} \in \Vmsk, \quad \mbox{for a.e. } t \in (0,T).
\end{align}
Obviously we have $\beps(w^{\ms},(z_h-z_{H,k}^{\ms})\dott)=0$ for all $w^{\ms} \in \Vmsk$ and for almost every $t \in [0,T]$. This implies that $(z_h-z_{H,\Omega}^{\ms})\dott$ is in the $\beps(\cdot,\cdot)$-orthogonal complement of $\Vms$ (for almost every $t$), hence it is in the kernel of the quasi-interpolation operator $I_H$. Omitting the $t$-dependency, we obtain
\begin{eqnarray}
\label{dual-estimate-1}\nonumber \beps( z_h-z_{H,\Omega}^{\ms}, z_h-z_{H,\Omega}^{\ms} ) &=& (e_{H,k} ,z_h-z_{H,\Omega}^{\ms} )_{L^2(\Omega)} \\
\nonumber &=& (e_{H,k} , (z_h-z_{H,\Omega}^{\ms}) - I_H( z_h-z_{H,\Omega}^{\ms}) )_{L^2(\Omega)} \\
&\overset{(\ref{lemma-properties-clement-op-est})}{\lesssim}& H \| e_{H,k} \|_{L^2(\Omega)} \| z_h-z_{H,\Omega}^{\ms} \|_{H^1(\Omega)}.
\end{eqnarray}
Next, let us define the energy
$$E(v_H):=\beps(z_h - v_H - Q_{h,k}(v_H), z_h - v_H - Q_{h,k}(v_H)) \quad \mbox{for } v_H \in V_H$$
and let
us write $z_{H,\Omega}^{\ms}=z_{H,\Omega} + Q_{h,\Omega}(z_{H,\Omega})$ and $z_{H,k}^{\ms}=z_{H,k} + Q_{h,k}(z_{H,k})$ with $z_{H,\Omega},z_{H,k} \in V_H$. Since we have 
$$\beps( z_h - z_{H,k} - Q_{h,k}(z_{H,k}), v_H + Q_{h,k}(v_H))=0 \qquad \mbox{for all } v_H \in V_H,$$
we know that this is equivalent to the fact that $z_{H,k} \in V_H$ must minimize the energy $E(\cdot)$ on $V_H$.
Hence
\begin{eqnarray}
\nonumber\| z_h - z_{H,k} - Q_{h,k}(z_{H,k}) \|_{H^1(\Omega)} &\lesssim& \| z_h - z_{H,\Omega} - Q_{h,k}(z_{H,\Omega}) \|_{H^1(\Omega)}\\
\nonumber
&\le& \| z_h - z_{H,\Omega}^{\ms} \|_{H^1(\Omega)} + \| (Q_{h,\Omega} - Q_{h,k})(z_{H,\Omega}) \|_{H^1(\Omega)} \\
\nonumber
&\overset{(\ref{dual-estimate-1}),(\ref{decay-of-correctors})}{\lesssim}& H \| e_{H,k} \|_{L^2(\Omega)} + \theta^k k^{d/2} \| Q_{h,\Omega}(z_{H,\Omega})
\|_{H^1(\Omega)}\\
\label{newstep-trolu}&\overset{(\ref{stability-estimates-for-various-ops})}{\lesssim}& (H + \theta^k k^{d/2}) \| e_{H,k} \|_{L^2(\Omega)}.
\end{eqnarray}
Note that in the last step, we used $\| z_{H,\Omega}\|_{H^1} = 
\| P_{H} (z_{H,\Omega})\|_{H^1}
= \| P_{H} ( z_{H,\Omega} + Q_{h,\Omega}(z_{H,\Omega}) )\|_{H^1}
=  \| P_{H} ( z_{H,\Omega}^{\ms} )\|_{H^1}$, which together with
\eqref{stability-estimates-for-various-ops} (i.e. the $H^1$-stability of $P_H$ on quasi-uniform grids) yields
$$
 \| Q_{h,\Omega}(z_{H,\Omega})
\|_{H^1(\Omega)} \lesssim \| z_{H,\Omega}\|_{H^1}
=  \| P_{H} ( z_{H,\Omega}^{\ms} )\|_{H^1} 
\lesssim  \|z_{H,\Omega}^{\ms}\|_{H^1} 
\lesssim \| e_{H,k} \|_{L^2(\Omega)}.
$$
As a direct consequence of \eqref{newstep-trolu}, using $\beps(e_{H,k}, z_{H,k}^{\ms})=0$ (combining (\ref{elliptic-projection-pi_h}) and (\ref{elliptic-projection-pi_Hk}) for test functions in $\Vmsk$)
\begin{align}
\label{L2-estimate-for-general-v-prestage}\| e_{H,k} \|_{L^2(\Omega)}^2 = \beps( e_{H,k} , z_h ) = \beps( e_{H,k} , z_h -  z_{H,k}^{\ms}) \lesssim  \| e_{H,k} \|_{H^1(\Omega)} (H + \theta^k k^{d/2}) \| e_{H,k} \|_{L^2(\Omega)}.
\end{align}
The bound $\| e_{H,k} \|_{H^1(\Omega)} \lesssim \| v \|_{H^1(\Omega)}$ and $\pi_{H,k}^{\ms}(v) = (P_H \circ \pi_{H,k}^{\ms})(v) + (Q_{h,k} \circ P_H \circ \pi_{H,k}^{\ms})(v)$ conclude the estimate
\begin{eqnarray*}
\| (P_H \circ \pi_{H,k}^{\ms})(v) - v \|_{L^2(\Omega)} &\le& 
\| \pi_{H,k}^{\ms}(v) - v \|_{L^2(\Omega)} + \| (Q_{h,k} \circ P_H \circ \pi_{H,k}^{\ms})(v) \|_{L^2(\Omega)}\\
&\overset{(\ref{lemma-properties-clement-op-est}),(\ref{stability-estimates-for-various-ops}),{(\ref{L2-estimate-for-general-v-prestage})}}{\lesssim}& \| v - \pi_h(v) \|_{L^2(\Omega)} + (H + \theta^k k^{d/2}) \| v \|_{H^1(\Omega)} + H \| v \|_{H^1(\Omega)}.
\end{eqnarray*}
Hence for all $v \in H^1_0(\Omega)$
\begin{align}
\label{L2-estimate-for-general-v}
\| (P_H \circ \pi_{H,k}^{\ms})(v) - v \|_{L^2(\Omega)} + \| \pi_{H,k}^{\ms}(v) - v \|_{L^2(\Omega)} 
\lesssim \| v - \pi_h(v) \|_{L^2(\Omega)} + (H + \theta^k k^{d/2}) \| v \|_{H^1(\Omega)}.
\end{align}
The results follows with $v=\partial_t^i \ueps(\cdot,t)$.\\
{\it Error estimate under high regularity assumptions - (\ref{projection-error-estimate-3-new-1}).} For the next estimate, we restrict our considerations to the solution $\ueps$ of (\ref{wave-equation-weak}). Let the regularity assumptions of the lemma hold true and let us introduce the simplifying notation
\begin{align*}
\veps := \partial_t^i \ueps \quad \mbox{and} \quad \bar{F}:=  \partial_t^i F.
\end{align*}
We observe that $\veps$ solves the equation
\begin{align*}
( \partial_{tt} \veps(\cdot,t), w )_{L^2(\Omega)} + \beps(\veps(\cdot,t), w ) = (\bar{F}(\cdot,t) ,w )_{L^2(\Omega)}
\end{align*}
for all $w \in H^1_0(\Omega)$, for almost every $t \in (0,T)$. By the definition of projections, we have
\begin{align*}
\beps( (\pi_{H,\Omega}^{\ms}(\veps) - \pi_h(\veps))(\cdot,t) , w ) = 0  \qquad \mbox{for all } w \in \Vmsk, \quad \mbox{for almost every } t \in (0,T).
\end{align*}
We conclude $(\pi_{H,\Omega}^{\ms}(\veps) - \pi_h(\veps))(\cdot,t) \in W_h$ for almost every $t$ and in particular
\begin{align}
I_H((\pi_{H,\Omega}^{\ms}(\veps) -  \pi_h(\veps))(\cdot,t)) = 0 \quad \mbox{for almost every } t \in (0,T).
\end{align}
Furthermore, with the notation $\pi_{H,k}^{\ms}( \veps )=v_{H,k}+ Q_{h,k}(v_{H,k})$ and $\pi_{H,\Omega}^{\ms}(\veps)=v_{H,\Omega}+ Q_{h,\Omega}(v_{H,\Omega})$ we have again that $v_{H,k}(\cdot,t)\in V_H$ minimizes the
energy
$$E(\Phi_H):=\beps(\pi_h(\veps(\cdot,t) - \Phi_H(\cdot,t) - Q_{h,k}(\Phi_H)(\cdot,t),\pi_h(\veps(\cdot,t) - \Phi_H(\cdot,t) - Q_{h,k}(\Phi_H)(\cdot,t))$$
for $\Phi_H \in V_H$ and therefore
\begin{align*}
\| \pi_{H,k}^{\ms}(\veps(\cdot,t)) - \pi_h(\veps(\cdot,t)) \|_{H^1(\Omega)} 
&= \| v_{H,k}(\cdot,t)+ Q_{h,k}(v_{H,k})(\cdot,t) - \pi_h(\veps(\cdot,t)) \|_{H^1(\Omega)} \\
&\lesssim \| v_{H,\Omega}(\cdot,t)+ Q_{h,k}(v_{H,\Omega})(\cdot,t) - \pi_h(\veps(\cdot,t)) \|_{H^1(\Omega)}.
\end{align*}
For brevity, let us from now on leave out the $t$-dependency in the functions for the rest of the proof.
Hence, we obtain in the same way as for the low regularity estimate
\begin{eqnarray}
\label{general-estimate-pi-H-k}\nonumber\lefteqn{\| \pi_{H,k}^{\ms}(\veps) - \pi_h(\veps) \|_{H^1(\Omega)}}\\
\nonumber&\le& \| u_{H,\Omega}+ Q_{h,\Omega}(v_{H,\Omega}) - \pi_h(\veps) \|_{H^1(\Omega)} + \| ( Q_{h,\Omega} - Q_{h,k})(v_{H,\Omega}) \|_{H^1(\Omega)} \\
\nonumber&\lesssim& \| v_{H,\Omega}+ Q_{h,\Omega}(v_{H,\Omega}) - \pi_h(\veps) \|_{H^1(\Omega)} + k^{d/2} \theta^k \| \pi_{H,\Omega}^{\ms}(\veps) \|_{H^1(\Omega)}\\
&\lesssim& \| \pi_{H,\Omega}^{\ms}(\veps)  - \pi_h(\veps) \|_{H^1(\Omega)} + k^{d/2} \theta^k \| \veps \|_{H^1(\Omega)},
\end{eqnarray}
where we used again the $H^1$-stability of $P_H$ via the equation $v_{H,\Omega}= P_H(v_{H,\Omega}+ Q_{h,\Omega}(v_{H,\Omega})) = (P_H \circ \pi_{H,\Omega}^{\ms})(\veps)$.
We next estimate the term $\| \pi_{H,\Omega}^{\ms}(\veps)  - \pi_h(\veps) \|_{H^1(\Omega)}$ in this estimate. For this, we use the equality
\begin{align}
\label{L2-quasi-orthogonality}( v , w_h )_{L^2(\Omega)} = ( v - I_H(v) , w_h - I_H(w_h) )_{L^2(\Omega)} \qquad \mbox{for all } v \in L^2(\Omega), w_h \in W_h.
\end{align}
This equation holds because of $I_H(w_h)=0$ for all $w_h \in W_h$ and $(v_H,w_h)_{L^2(\Omega)}=0$ for all $v_H \in V_H$ (because $w_h$ is in the kernel of the $L^2$-projection). With that we obtain
\begin{eqnarray*}
\lefteqn{\beps( \pi_{H,\Omega}^{\ms}(\veps)  - \pi_h(\veps) \hspace{2pt},\hspace{2pt} \pi_{H,\Omega}^{\ms}(\veps)  - \pi_h(\veps) )}\\
&=&  \beps( \pi_h(\veps)  \hspace{2pt},\hspace{2pt} \pi_h(\veps)-\pi_{H,\Omega}^{\ms}(\veps) ) \\
&=&  \beps( \veps  \hspace{2pt},\hspace{2pt} \pi_h(\veps)-\pi_{H,\Omega}^{\ms}(\veps) )\\
&=& ( \bar{F} - \partial_{tt}  \veps  \hspace{2pt},\hspace{2pt} \pi_h(\veps)-\pi_{H,\Omega}^{\ms}(\veps) )_{L^2(\Omega)} \\
&\overset{(\ref{L2-quasi-orthogonality})}{=}&  \left( \hspace{2pt} (\bar{F} \hspace{-2pt} - \hspace{-2pt} \partial_{tt}  \veps)-I_H(F \hspace{-2pt} - \hspace{-2pt} \partial_{tt}  \veps) \hspace{2pt} , \hspace{2pt} (\pi_h(\veps) \hspace{-2pt} - \hspace{-2pt} \pi_{H,\Omega}^{\ms}(\veps)) - I_H(\pi_h(\veps) \hspace{-2pt} - \hspace{-2pt} \pi_{H,\Omega}^{\ms}(\veps)) \hspace{2pt} \right)_{L^2(\Omega)} \\
&\overset{(\ref{lemma-properties-clement-op-est})}{\lesssim}& 
H^{s+1} \| \bar{F} - \partial_{tt}  \veps \|_{H^s(\Omega)} \| \pi_h(\veps)-\pi_{H,\Omega}^{\ms}(\veps)) \|_{H^1(\Omega)}.
\end{eqnarray*}
Combining this with (\ref{general-estimate-pi-H-k}) we get
\begin{eqnarray}
\label{estimate-for-H1-error-pi-Hk}
\| \pi_{H,k}^{\ms}(\veps) - \pi_h(\veps) \|_{H^1(\Omega)} \lesssim H^{s+1} \| \partial_{tt}  \veps - \bar{F} \|_{H^s(\Omega)} + k^{d/2} \theta^k \| \veps \|_{H^1(\Omega)},
\end{eqnarray}
which proves the estimate (\ref{projection-error-estimate-3-new-1}) for the case $m=1$. Now, we prove the estimate for the case $m=0$ by applying the same Aubin-Nitsche argument as above. Defining $e_{H,k}:= \pi_{H,k}^{\ms}(\veps) - \pi_h(\veps)$ we are looking for $z_h \in L^2(0,T;V_h)$ and $z_{H,k}^{\ms} \in \Vmsk$ that are defined analogously to (\ref{dual-prob-1}) and (\ref{dual-prob-2}).
Hence we get with same strategy as before 
\begin{align*}
\| z_{H,k}^{\ms} - z_h \|_{H^1(\Omega)} \lesssim (H + k^{d/2} \theta^k) \| e_{H,k} \|_{L^2(\Omega)}
\end{align*}
and hence together with (\ref{estimate-for-H1-error-pi-Hk}) and Youngs inequality 
\begin{align*}
&\| e_{H,k} \|_{L^2(\Omega)}^2 = |\beps(e_{H,k}, z_h - z_{H,k}^{\ms})| \\
&\hspace{3pt}\lesssim 
(H^{s+2} + k^{d{(s+2)/(2s+2)}} \theta^{k{(s+2)/(s+1)}}) \left( \| \partial_{tt}  \veps - \bar{F} \|_{H^s(\Omega)} + \| \veps \|_{H^1(\Omega)} \right) \| e_{H,k} \|_{L^2(\Omega)}.
\end{align*}
In total we proved (\ref{projection-error-estimate-3-new-1}) for $m=0$, i.e.
\begin{eqnarray*}
\lefteqn{\| \pi_{H,k}^{\ms}(\veps) - \veps \|_{L^2(\Omega)}}\\
&\lesssim& \| \veps - \pi_h(\veps) \|_{L^2(\Omega)} + (H^{s+2} + k^{d{(s+2)/(2s+2)}} \theta^{k{(s+2)/(s+1)}}) \left( \| \partial_{tt}  \veps - \bar{F} \|_{H^s(\Omega)} + \| \veps \|_{H^1(\Omega)} \right).
\end{eqnarray*}
\end{proof}
\subsection{Estimates for well-prepared initial values}\label{subsection-well-prepared-estimates}

In the next step, we exploit the estimates derived for the projections $\pi_{H,k}^{\ms}$ to bound the error for the numerically homogenized solutions $\uHk$. The lemma is a data-explicit (in particular $\eps$-explicit and $T$-explicit) version of the estimates (\ref{main-result-est-2a}) and (\ref{main-result-est-3a}) in Theorem \ref{apriori-semidiscrete}.
\begin{lemma}
\label{lemma-apriori-semidiscrete}
Assume that (H0) holds and let $s \in \{0,1\}$. If 
$\partial_t \ueps \in L^1(H^1_0)$; $\partial_{tt} \ueps , F \in L^{\infty}(H^s_0)$ and $\partial_{ttt} \ueps , \partial_t F \in L^1(H^s_0)$ it holds
\begin{eqnarray}
\label{lemma-apriori-semidiscrete-est-2}\lefteqn{\| \ueps - (\uHk + Q_{h,k}(\uHk)) \|_{L^{\infty}(L^2)} }\\
\nonumber
&\lesssim& \| \ueps - \pi_h(\ueps) \|_{L^{\infty}(L^2)} + \| \partial_t \ueps - \pi_h(\partial_t \ueps) \|_{L^1(L^2)} \\
\nonumber
&\enspace& + (H^{2+s} + k^{d} \theta^{k{(s+2)/(s+1)}}) \left(\| \ueps \|_{L^{\infty}(H^1)} + \| \partial_t \ueps \|_{L^1(H^1)} + \| \partial_{tt}  \ueps \|_{L^{\infty}(H^s)}  + \| \partial_{ttt}  \ueps \|_{L^1(H^s)} \right. \\
\nonumber
&\enspace& \qquad \left. + \| F \|_{L^{\infty}(H^s)} + \| \partial_t F \|_{L^1(H^s)} \right).
\end{eqnarray}
Recall that the $\lesssim$-notation only contains dependencies on $\Omega$, $d$, $\alpha$, $\beta$ and the shape regularity of $\T_H$, but not on $T$ and $\eps$.
\end{lemma}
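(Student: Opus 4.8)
The plan is to observe first that \eqref{semi-discrete-lod-equation} is nothing but a standard Galerkin approximation of the wave equation posed in the multiscale space $\Vmsk$, and then to run an elliptic-projection argument whose $L^2$-part is closed by a time-integrated test function of Baker's type. Unfolding the definitions of $\bHk$ and $(\cdot,\cdot)_{H,k}$ and using that $Q_{h,k}$ is linear and time-independent (hence commutes with $\partial_{tt}$), the corrected approximation $\umsk := \uHk + Q_{h,k}(\uHk) \in \Vmsk$ satisfies
\begin{align*}
(\partial_{tt}\umsk, w)_{L^2(\Omega)} + \beps(\umsk, w) = (F, w)_{L^2(\Omega)} \qquad \mbox{for all } w \in \Vmsk,
\end{align*}
together with the initial data $\umsk(\cdot,0) = \pi_{H,k}^{\ms}(f)$ and $\partial_t\umsk(\cdot,0) = P_{H,k}^{\ms}(g)$.

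I would then split the error through the elliptic projection, $\ueps - \umsk = \rho + \theta$ with $\rho := \ueps - \pi_{H,k}^{\ms}(\ueps)$ and $\theta := \pi_{H,k}^{\ms}(\ueps) - \umsk \in \Vmsk$. The projection part is handled directly by Lemma \ref{lemma-multiscale-projection}: estimate \eqref{projection-error-estimate-3-new-1} with $i=0$, $m=0$ bounds $\|\rho\|_{L^{\infty}(L^2)}$ and produces exactly the $i=0$ group of right-hand-side terms ($\|\ueps-\pi_h(\ueps)\|_{L^{\infty}(L^2)}$, $\|\partial_{tt}\ueps\|_{L^\infty(H^s)}$, $\|F\|_{L^\infty(H^s)}$, $\|\ueps\|_{L^\infty(H^1)}$), whereas the same estimate with $i=1$, $m=0$ bounds $\|\partial_t\rho\|_{L^1(L^2)}$ and produces the $i=1$ group. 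Subtracting the scheme from the weak form tested against $w \in \Vmsk$ and using the defining $\beps$-orthogonality $\beps(\rho,w)=0$ yields the discrete error identity
\begin{align*}
(\partial_{tt}\theta, w)_{L^2(\Omega)} + \beps(\theta, w) = -(\partial_{tt}\rho, w)_{L^2(\Omega)} \qquad \mbox{for all } w \in \Vmsk.
\end{align*}
Note that $\theta(\cdot,0)=0$, since $\umsk(\cdot,0)=\pi_{H,k}^{\ms}(f)=\pi_{H,k}^{\ms}(\ueps(\cdot,0))$.

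To reach the optimal $L^2$-rate I would \emph{not} use the naive energy test $w=\partial_t\theta$ (which only delivers the energy-norm, i.e. $H^1$, rate and still leaves a $\partial_{tt}\rho$ on the right). Instead, relying on $\theta\in C^0([0,T];L^2(\Omega))$, I fix $t^\ast \in [0,T]$ with $\|\theta(\cdot,t^\ast)\|_{L^2(\Omega)} = \|\theta\|_{L^{\infty}(L^2)}$, set $\phi(\cdot,t) := \int_t^{t^\ast}\theta(\cdot,s)\,ds \in \Vmsk$ (so $\phi(\cdot,t^\ast)=0$ and $\partial_t\phi = -\theta$), test the error identity with $\phi$, and integrate over $[0,t^\ast]$. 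Two integrations by parts in time turn the left-hand side into $\tfrac12\|\theta(\cdot,t^\ast)\|_{L^2}^2 + \tfrac12\beps(\phi(\cdot,0),\phi(\cdot,0)) - (\partial_t\theta(\cdot,0),\phi(\cdot,0))$, and the right-hand side into $(\partial_t\rho(\cdot,0),\phi(\cdot,0)) - \int_0^{t^\ast}(\partial_t\rho,\theta)_{L^2}\,ds$, all terms evaluated at $t^\ast$ dropping out because $\phi(\cdot,t^\ast)=0$.

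The decisive point, and the step I expect to be the crux, is the treatment of the two surviving initial terms: collecting them gives $(\partial_t\theta(\cdot,0)+\partial_t\rho(\cdot,0),\phi(\cdot,0))_{L^2}$, and since $\partial_t\theta(\cdot,0)+\partial_t\rho(\cdot,0) = (\pi_{H,k}^{\ms}(g)-P_{H,k}^{\ms}(g)) + (g-\pi_{H,k}^{\ms}(g)) = g - P_{H,k}^{\ms}(g)$ while $\phi(\cdot,0)\in\Vmsk$, this pairing vanishes \emph{exactly} by the definition of the $L^2$-projection $P_{H,k}^{\ms}$. This is precisely where the choice $\partial_t\umsk(\cdot,0)=P_{H,k}^{\ms}(g)$ in \eqref{semi-discrete-lod-equation} is essential; with any other initialization one would lose a power of $H$. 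Dropping the nonnegative term $\tfrac12\beps(\phi(\cdot,0),\phi(\cdot,0))$, I am left with $\tfrac12\|\theta(\cdot,t^\ast)\|_{L^2}^2 \le \int_0^{t^\ast}\|\partial_t\rho\|_{L^2}\|\theta\|_{L^2}\,ds \le \|\partial_t\rho\|_{L^1(L^2)}\,\|\theta(\cdot,t^\ast)\|_{L^2}$, hence $\|\theta\|_{L^{\infty}(L^2)} \le 2\|\partial_t\rho\|_{L^1(L^2)}$ with no dependence on $T$. Combining $\|\ueps-\umsk\|_{L^{\infty}(L^2)} \le \|\rho\|_{L^{\infty}(L^2)} + \|\theta\|_{L^{\infty}(L^2)}$ with the two applications of Lemma \ref{lemma-multiscale-projection}, and bounding the localization factor by $(k^{d/2}\theta^k)^{p(s,0)} \le k^{d}\theta^{k(s+2)/(s+1)}$, then yields the claimed estimate.
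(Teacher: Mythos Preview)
Your proof is correct and follows essentially the same route as the paper: the elliptic-projection splitting $\ueps-\umsk=\rho+\theta$ (the paper's $e^{\pi}$ and $-\psi^{\pi}$), Baker's time-integrated test function, and the observation that the surviving initial-velocity boundary term $(g-P_{H,k}^{\ms}(g),\phi(0))_{L^2}$ vanishes because $\phi(0)\in\Vmsk$, followed by Lemma~\ref{lemma-multiscale-projection}. The only cosmetic differences are that you pick the maximizing time $t^\ast$ up front (the paper bounds for arbitrary $t_0$ and then takes the supremum) and that you spell out the cancellation of the initial term more explicitly than the paper, which simply drops it.
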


The above Lemma proves the first part of Theorem \ref{apriori-semidiscrete}, i.e. estimates (\ref{main-result-est-2a}) and (\ref{main-result-est-3a}), as explained next.\\
\noindent[Proof of estimates (\ref{main-result-est-2a}) and (\ref{main-result-est-3a}) in Theorem \ref{apriori-semidiscrete}] \label{conclusion-3}
Exploiting the time-regularity result presented in Proposition \ref{proposition-time-regularity}, we observe that if $F \in L^{\infty}(H^s_0)\cap W^{1,1}(H^s_0)$ and if the data is {\it well-prepared and compatible of order $2+s$} in the sense of Definition \ref{def-of-comp-condition}, we obtain that $\ueps$ is sufficiently regular for Lemma \ref{lemma-apriori-semidiscrete} to hold. Furthermore we have 
$$\| \ueps \|_{L^{\infty}(H^1)} + \| \partial_t \ueps \|_{L^1(H^1)} + \| \partial_{tt}  \ueps \|_{L^{\infty}(H^s)}  + \| \partial_{ttt}  \ueps \|_{L^1(H^s)} \le C_w$$
independent of $\eps$. 
In order to treat the $\theta$-terms in \eqref{lemma-apriori-semidiscrete-est-2}, we choose
$k:= \frac{(s+1)\ln(H)}{(s+2)\ln(\theta)}(s+2+\delta)$
for some $\delta>0$. This gives us 
$$\theta^{k (s+2)/(s+1)} = \theta^{(s+2+\delta) \ln(H) / \ln(\theta)} =e^{ \ln(\theta) (s+2+\delta) \ln(H) / \ln(\theta)}
= e^{ (s+2+\delta) \ln(H) } = H^{s+2+\delta}.
 $$
For $k$ as above and $\delta>0$ we hence have $k^{d} \theta^{k(s+2)/(s+1)} \lesssim  H^{s+2}$. The constant $C_{\theta}$ in Theorem \ref{apriori-semidiscrete} can hence be chosen as $C_{\theta}=\frac{8}{3 |\ln(\theta)|}$ in the worst case.
This ends the proof of (\ref{main-result-est-2a}) and (\ref{main-result-est-3a}) in Theorem \ref{apriori-semidiscrete}.
\hfill$\square$

\begin{proof}[Proof of Lemma \ref{lemma-apriori-semidiscrete}]
To prove the result, we can follow the arguments of Baker \cite{Bak76}. For the numerically homogenized solution $\uHk$ of (\ref{semi-discrete-lod-equation}), we define $\umsk:=\uHk + Q_{h,k}(\uHk)$. For brevity, we denote $(\cdot,\cdot):=(\cdot,\cdot)_{L^2(\Omega)}$. Furthermore, we use the notation from Lemma \ref{lemma-multiscale-projection} and define the errors
\begin{align*}
e^{\ms}
:= \ueps - \umsk, \quad 
e^{\pi}
:= \ueps - \pi_{H,k}^{\ms}(\ueps) \quad \mbox{and} \quad 
\psi^{\pi}
:= \umsk - \pi_{H,k}^{\ms}(\ueps). 
\end{align*}
Observe that we have for $v \in L^2(0,T;\Vmsk)$ and almost every $t \ge 0$:
\begin{eqnarray*}
\lefteqn{0 = ( \hspace{2pt} \partial_{tt} \psi^{\pi}(\cdot,t) - \partial_{tt} e^{\pi}(\cdot,t) , v \dott \hspace{2pt} ) + \beps( \hspace{2pt} \psi^{\pi}(\cdot,t), v \dott \hspace{2pt} )}\\ 
&=& \partial_{t}  ( \hspace{2pt} \partial_{t} \psi^{\pi}(\cdot,t) - \partial_{t} e^{\pi}(\cdot,t) , v \dott \hspace{2pt} )
- ( \hspace{2pt} \partial_{t} \psi^{\pi}(\cdot,t) - \partial_{t} e^{\pi}(\cdot,t) , \partial_{t} v \dott \hspace{2pt} )
 + \beps( \hspace{2pt} \psi^{\pi}(\cdot,t), v \dott \hspace{2pt} )\\
 &=& - \partial_{t}  ( \hspace{2pt} \partial_{t} e^{\ms}(\cdot,t) , v \dott \hspace{2pt} ) - ( \hspace{2pt} \partial_{t} \psi^{\pi}(\cdot,t) - \partial_{t} e^{\pi}(\cdot,t) , \partial_{t} v \dott \hspace{2pt} )
 + \beps( \hspace{2pt} \psi^{\pi}(\cdot,t), v \dott \hspace{2pt} ).
\end{eqnarray*}
For some arbitrary $0<t_0\le T$ we use the function $v \dott = \int_t^{t_0} \psi^{\pi}(\cdot,s) \hspace{2pt} ds$ in the above equation (and the fact that $\partial_t v = - \psi^{\pi}$) to obtain
\begin{eqnarray*}
\lefteqn{ \frac{1}{2} \frac{d}{dt} \| \psi^{\pi}(\cdot,t) \|_{L^2(\Omega)}^2
-  \frac{1}{2} \frac{d}{dt} \beps \left( \hspace{2pt}  \int_t^{t_0} \psi^{\pi}(\cdot,s) \hspace{2pt} ds ,  \int_t^{t_0} \psi^{\pi}(\cdot,s) \hspace{2pt} ds \hspace{2pt} \right) } \\
&=& \partial_{t}  \left( \hspace{2pt} \partial_{t} e^{\ms}(\cdot,t) , \int_t^{t_0} \psi^{\pi}(\cdot,s) \hspace{2pt} ds \hspace{2pt} \right) + ( \partial_{t} e^{\pi}(\cdot,t) , \psi^{\pi} \dott \hspace{2pt} ).
\end{eqnarray*}
Integration from $0$ to ${t_0}$ yields
\begin{eqnarray*}
\lefteqn{ \frac{1}{2} \| \psi^{\pi}(\cdot,t_0) \|_{L^2(\Omega)}^2 - \frac{1}{2} \| \psi^{\pi}(\cdot,0) \|_{L^2(\Omega)}^2
+  \frac{1}{2} \beps \left( \hspace{2pt}  \int_0^{t_0} \psi^{\pi}(\cdot,s) \hspace{2pt} ds ,  \int_0^{t_0} \psi^{\pi}(\cdot,s) \hspace{2pt} ds \hspace{2pt} \right) } \\
&=& - \left( \hspace{2pt} \partial_{t} e^{\ms}(\cdot,0) , \int_0^{t_0} \psi^{\pi}(\cdot,s) \hspace{2pt} ds \hspace{2pt} \right) + \int_0^{t_0} ( \partial_{t} e^{\pi}(\cdot,t) , \psi^{\pi} \dott \hspace{2pt} ) \hspace{2pt} dt.
\end{eqnarray*}
Hence
\begin{eqnarray*}
\lefteqn{\| \psi^{\pi}(\cdot,t_0) \|_{L^2(\Omega)}^2}\\
&\le& \| \psi^{\pi}(\cdot,0) \|_{L^2(\Omega)}^2 - 2 \left( \hspace{2pt} \partial_{t} e^{\ms}(\cdot,0) , \int_0^{t_0} \psi^{\pi}(\cdot,s) \hspace{2pt} ds \hspace{2pt} \right) + 2 \int_0^{t_0} ( \partial_{t} e^{\pi}(\cdot,t) , \psi^{\pi} \dott \hspace{2pt} ) \hspace{2pt} dt \\
&\le& \| \psi^{\pi}(\cdot,0) \|_{L^2(\Omega)}^2 + 2 \int_0^{t_0} ( \partial_{t} e^{\pi}(\cdot,t) , \psi^{\pi} \dott \hspace{2pt} ) \hspace{2pt} dt \\
&\le& \| \psi^{\pi}(\cdot,0) \|_{L^2(\Omega)}^2 + 
2 \| \partial_{t} e^{\pi} \|_{L^1(0,T;L^2(\Omega))}^2 
+ \frac{1}{2} \| \psi^{\pi} \|_{L^{\infty}(0,T;L^2(\Omega))}^2.
\end{eqnarray*}
By moving the term $\| \psi^{\pi} \|_{L^{\infty}(0,T;L^2(\Omega))}^2$ to the left hand side, we get
\begin{align}
\label{lemma-first-est-befor-tri-ineq}\| \psi^{\pi} \|_{L^{\infty}(0,T;L^2(\Omega))} \lesssim \| \psi^{\pi}(\cdot,0) \|_{L^2(\Omega)} + 
 \| \partial_{t} e^{\pi} \|_{L^1(0,T;L^2(\Omega))}.
\end{align}
However, since $\umsk(\cdot,0)=\pi_{H,k}^{\ms}(f)$, we get $\psi^{\pi}(\cdot,0)=\pi_{H,k}^{\ms}(f)-\pi_{H,k}^{\ms}(\ueps(\cdot,0))=0$. Hence, together with the triangle inequality for $\psi^{\pi}=(\umsk - \ueps) + (\ueps - \pi_{H,k}^{\ms}(\ueps))$, equation (\ref{lemma-first-est-befor-tri-ineq}) implies
\begin{align*}
\| \umsk - \ueps \|_{L^{\infty}(0,T;L^2(\Omega))} \lesssim \| \ueps - \pi_{H,k}^{\ms}(\ueps) \|_{L^{\infty}(0,T;L^2(\Omega))} + 
\| \partial_{t} \ueps - \pi_{H,k}^{\ms}(\partial_{t} \ueps) \|_{L^1(0,T;L^2(\Omega))}.
\end{align*}
Together with Lemma \ref{lemma-multiscale-projection} this finishes the proof of (\ref{lemma-apriori-semidiscrete-est-2}).
\end{proof}

Next, we prove an $\eps$-explicit and $T$-explicit version of the estimates 
from the second part of Theorem \ref{apriori-semidiscrete}.

\noindent[Proof of estimates (\ref{main-result-est-2b}) and (\ref{main-result-est-3b}) in Theorem \ref{apriori-semidiscrete}]
Similarly as for the first part of Theorem \ref{apriori-semidiscrete}, these
are obtained by combining Lemma \ref{lemma-apriori-semidiscrete-H1} below with the regularity statement in Proposition \ref{proposition-time-regularity}.
\hfill$\square$

\begin{lemma}
\label{lemma-apriori-semidiscrete-H1}
Let (H0) be fulfilled, let $s \in \{0,1\}$ and assume 
$\partial_t \ueps \in L^{\infty}(H^1_0)$; $\partial_{ttt} \ueps, \partial_t F \in L^{\infty}(L^2)$; $\partial_{tt} \ueps, F \in L^{\infty}(H^s_0)$; $\partial_{tt} \ueps \in L^1(H^1_0)$; $\partial_{t}^4 \ueps , \partial_{tt} F \in L^1(L^2)$ and $g \in H^1_0(\Omega)$.\\
If $s=0$, it holds
\begin{eqnarray}
\label{lemma-apriori-semidiscrete-est-H1-2}\lefteqn{\| \partial_t \ueps - \partial_t (\uHk + Q_{h,k}(\uHk)) \|_{L^{\infty}(L^2)} 
+ \| \ueps - (\uHk + Q_{h,k}(\uHk)) \|_{L^{\infty}(H^1)}}\\
\nonumber
&\lesssim& \| \partial_t \ueps - \pi_h(\partial_t \ueps) \|_{L^{\infty}(L^2)} 
+ \| \partial_{tt} \ueps - \pi_h(\partial_{tt} \ueps) \|_{L^1(L^2)} +  \|\ueps - \pi_h(\ueps) \|_{L^{\infty}(H^1)} \\
\nonumber
&\enspace& + (H + k^{d/2} \theta^{k}) \left( 
{\sum_{i=0}^1} \| \partial^i_t \ueps \|_{L^{\infty}(H^1)} 
+ \| \partial_{tt} \ueps \|_{L^{\infty}(L^2)}
+ \| \partial_{tt}  \ueps \|_{L^{1}(H^1)} 
+ \| \partial_{ttt} \ueps \|_{L^{\infty}(L^2)} \right)\\
\nonumber&\enspace& + (H + k^{d/2} \theta^{k}) \left( 
 \| \partial_{t}^4  \ueps \|_{L^{1}(L^2)} 
+ {\sum_{i=0}^1} \| \partial_t^i F \|_{L^{\infty}(L^2)}
+ \| \partial_{tt} F \|_{L^1(L^2)}
+ \| g \|_{H^1(\Omega)} \right).
\end{eqnarray}
If $s=1$ and if the initial value in (\ref{semi-discrete-lod-equation}) is picked such that $\partial_t (\uHk+Q_{h,k}(\uHk))( \cdot , 0 ) = \pi_{H,k}^{\ms}(g)$, then we obtain the improved estimate
\begin{eqnarray}
\label{lemma-apriori-semidiscrete-est-H1-3}\lefteqn{\| \partial_t \ueps - \partial_t (\uHk + Q_{h,k}(\uHk)) \|_{L^{\infty}(L^2)} 
+ \| \ueps - (\uHk + Q_{h,k}(\uHk)) \|_{L^{\infty}(H^1)}}\\
\nonumber
&\lesssim& \| \partial_t \ueps - \pi_h(\partial_t \ueps) \|_{L^{\infty}(L^2)} 
+ \| \partial_{tt} \ueps - \pi_h(\partial_{tt} \ueps) \|_{L^1(L^2)} +  \|\ueps - \pi_h(\ueps) \|_{L^{\infty}(H^1)} \\
\nonumber
&\enspace& + (H^2 + k^{d/2} \theta^{k}) \left( 
{\sum_{i=0}^2} \| \partial^i_t \ueps \|_{L^{\infty}(H^1)} 
+ \| \partial_{tt}  \ueps \|_{L^{1}(H^1)} 
+ \| \partial_{ttt} \ueps \|_{L^{\infty}(L^2)} \right)\\
\nonumber&\enspace& + (H^2 + k^{d/2} \theta^{k}) \left( 
 \| \partial_{t}^4  \ueps \|_{L^{1}(L^2)} 
+ \| F \|_{L^{\infty}(H^1)}
+ \| \partial_t F \|_{L^{\infty}(L^2)}
+ \| \partial_{tt} F \|_{L^1(L^2)} \right).
\end{eqnarray}
Again, recall that the $\lesssim$-notation only contains dependencies on $\Omega$, $d$, $\alpha$, $\beta$ and the shape regularity of $\T_H$, but not on $T$ and $\eps$.
\end{lemma}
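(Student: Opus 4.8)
The plan is to run the classical energy argument for the semi-discrete wave equation in the multiscale test space $\Vmsk$, testing the error equation with the \emph{time derivative} of the discrete error rather than with the integrated quantity used in Lemma~\ref{lemma-apriori-semidiscrete}. First I would rewrite the method (\ref{semi-discrete-lod-equation}) in its equivalent Galerkin form: setting $\umsk := \uHk + Q_{h,k}(\uHk)$ and unfolding the definitions of $\bHk$ and $(\cdot,\cdot)_{H,k}$, the scheme reads $(\partial_{tt}\umsk\dott, v^{\ms})_{L^2(\Omega)} + \beps(\umsk\dott, v^{\ms}) = (F\dott, v^{\ms})_{L^2(\Omega)}$ for all $v^{\ms}\in\Vmsk$. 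Subtracting this from the weak formulation (\ref{wave-equation-weak}) restricted to test functions in $\Vmsk$ gives the Galerkin orthogonality $(\partial_{tt}e^{\ms}, v^{\ms})_{L^2(\Omega)} + \beps(e^{\ms}, v^{\ms}) = 0$ for $e^{\ms} := \ueps - \umsk$.

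As in Lemma~\ref{lemma-apriori-semidiscrete} I introduce the splitting $e^{\ms} = e^{\pi} - \psi^{\pi}$ with $e^{\pi} := \ueps - \pi_{H,k}^{\ms}(\ueps)$ and $\psi^{\pi} := \umsk - \pi_{H,k}^{\ms}(\ueps) \in \Vmsk$. Since $\beps(e^{\pi}, v^{\ms}) = 0$ for all $v^{\ms}\in\Vmsk$ by the definition (\ref{elliptic-projection-pi_Hk}) of the elliptic projection, the orthogonality reduces to the error equation $(\partial_{tt}\psi^{\pi}, v^{\ms})_{L^2(\Omega)} + \beps(\psi^{\pi}, v^{\ms}) = (\partial_{tt}e^{\pi}, v^{\ms})_{L^2(\Omega)}$. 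Choosing the admissible test function $v^{\ms} = \partial_t\psi^{\pi}\in\Vmsk$ yields the energy identity
\[ \tfrac{1}{2}\tfrac{d}{dt}\left( \|\partial_t\psi^{\pi}\|_{L^2(\Omega)}^2 + \beps(\psi^{\pi},\psi^{\pi}) \right) = (\partial_{tt}e^{\pi}, \partial_t\psi^{\pi})_{L^2(\Omega)}. \]
Writing $E(t) := \|\partial_t\psi^{\pi}(\cdot,t)\|_{L^2(\Omega)}^2 + \beps(\psi^{\pi}(\cdot,t),\psi^{\pi}(\cdot,t))$, bounding the right-hand side by $\|\partial_{tt}e^{\pi}\|_{L^2(\Omega)}\sqrt{E}$ and integrating the resulting inequality $\frac{d}{dt}\sqrt{E}\le\|\partial_{tt}e^{\pi}\|_{L^2(\Omega)}$ gives the uniform bound $\sqrt{E(t_0)}\lesssim\sqrt{E(0)}+\|\partial_{tt}e^{\pi}\|_{L^1(L^2)}$. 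Because $\beps(\cdot,\cdot)^{1/2}$ is equivalent to the $H^1$-seminorm, this simultaneously controls $\|\partial_t\psi^{\pi}\|_{L^{\infty}(L^2)}$ and $\|\psi^{\pi}\|_{L^{\infty}(H^1)}$.

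The two estimates of the lemma then differ through the initial energy $E(0)$. In both cases $\psi^{\pi}(\cdot,0)=0$, because $\umsk(\cdot,0)=\pi_{H,k}^{\ms}(f)=\pi_{H,k}^{\ms}(\ueps(\cdot,0))$, so $\beps(\psi^{\pi}(\cdot,0),\psi^{\pi}(\cdot,0))=0$. For the improved case $s=1$ the initial velocity is prescribed as $\partial_t\umsk(\cdot,0)=\pi_{H,k}^{\ms}(g)$, hence $\partial_t\psi^{\pi}(\cdot,0)=0$ and $E(0)=0$. In the generic case $s=0$ one has instead $\partial_t\psi^{\pi}(\cdot,0)=P_{H,k}^{\ms}(g)-\pi_{H,k}^{\ms}(g)$, which I would estimate by $\|g-P_{H,k}^{\ms}(g)\|_{L^2(\Omega)}+\|g-\pi_{H,k}^{\ms}(g)\|_{L^2(\Omega)}$ and, using that $P_{H,k}^{\ms}$ is the $L^2$-best approximation in $\Vmsk$ together with the general projection bound (\ref{L2-estimate-for-general-v}) applied to $v=g$, by $\|g-\pi_h(g)\|_{L^2(\Omega)}+(H+k^{d/2}\theta^k)\|g\|_{H^1(\Omega)}$. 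This is exactly the source of the extra $\|g\|_{H^1}$ contribution and of the reduction to first order in (\ref{lemma-apriori-semidiscrete-est-H1-2}).

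Finally I would assemble the stated norms via the triangle inequalities $\|\ueps-\umsk\|_{H^1}\le\|e^{\pi}\|_{H^1}+\|\psi^{\pi}\|_{H^1}$ and $\|\partial_t\ueps-\partial_t\umsk\|_{L^2}\le\|\partial_t e^{\pi}\|_{L^2}+\|\partial_t\psi^{\pi}\|_{L^2}$, invoking Lemma~\ref{lemma-multiscale-projection} for each projection error. The part I expect to be most delicate is the bookkeeping of the regularity parameter $s$ in (\ref{projection-error-estimate-3-new-1}): the source term $\|\partial_{tt}e^{\pi}\|_{L^1(L^2)}$ and the velocity error $\|\partial_t e^{\pi}\|_{L^2}$ must both be bounded with the $s=0$, $m=0$ versions (indices $i=2$ and $i=1$ respectively), which keep the time-regularity demand at $\partial_t^4\ueps,\partial_{tt}F\in L^1(L^2)$ and $\partial_{ttt}\ueps,\partial_t F\in L^{\infty}(L^2)$ while contributing the prefactor $H^2$; the genuine order of the scheme is then carried solely by $\|e^{\pi}\|_{H^1}$, estimated with the true $s$ (indices $i=0$, $m=1$, prefactor $H^{1+s}$), and — in the case $s=0$ — by the initial-velocity mismatch, which caps the rate at $H$. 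Keeping every $L^2$-type term at the $s=0$ level regardless of the target order is the key point that both avoids spurious higher space-time regularity on $\ueps$ and produces the $H$-versus-$H^2$ dichotomy between the two estimates.
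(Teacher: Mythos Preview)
Your proposal is correct and follows essentially the same route as the paper: Galerkin orthogonality for $e^{\ms}$, the splitting $e^{\ms}=e^{\pi}-\psi^{\pi}$, testing with $\partial_t\psi^{\pi}$ to obtain the energy identity, then triangle inequality plus Lemma~\ref{lemma-multiscale-projection}; your bookkeeping of which $(i,m,s)$-instance of (\ref{projection-error-estimate-3-new-1}) to use for each term is also exactly right. The only minor deviation is in the initial-velocity mismatch: your use of (\ref{L2-estimate-for-general-v}) produces an extra $\|g-\pi_h(g)\|_{L^2}$ contribution that is not in the stated bound, whereas the paper argues directly that $\|\pi_{H,k}^{\ms}(g)-g\|_{L^2}\lesssim H\|\pi_{H,k}^{\ms}(g)-g\|_{H^1}\lesssim H\|g\|_{H^1}$ to avoid it.
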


\begin{proof}
Again, we define the errors
$
e^{\ms}
:= \ueps - \umsk$, $e^{\pi}:= \ueps - \pi_{H,k}^{\ms}(\ueps)$ and $ \psi^{\pi}
:= \umsk - \pi_{H,k}^{\ms}(\ueps)$. 
We only consider the case $\partial_t \umsk( \cdot , 0 ) = P_{H,k}^{\ms}(g)$ (i.e. estimate (\ref{lemma-apriori-semidiscrete-est-H1-2})), the case $\partial_t \umsk( \cdot , 0 ) = \pi_{H,k}^{\ms}(g)$ (i.e. estimate (\ref{lemma-apriori-semidiscrete-est-H1-3})) follows analogously with $\partial_t \psi^{\pi}(\cdot,0)=0$.
By Galerkin orthogonality we obtain for a.e. $t\in[0,T]$
\begin{align*}
(\partial_{tt} e^{\ms}(\cdot,t) , v^{\ms} )_{L^2(\Omega)} + \beps( e^{\ms} (\cdot,t), v^{\ms} ) = 0  \qquad \mbox{for all } v^{\ms} \in \Vmsk
\end{align*}
and hence
\begin{align*}
(\partial_{tt} \psi^{\pi}(\cdot,t) , v^{\ms} )_{L^2(\Omega)} + \beps( \psi^{\pi} (\cdot,t), v^{\ms} ) = 
(\partial_{tt} e^{\pi}(\cdot,t) , v^{\ms} )_{L^2(\Omega)}
\qquad \mbox{for all } v^{\ms} \in \Vmsk.
\end{align*}
Testing with $v^{\ms} = \partial_t \psi^{\pi}$ yields for a.e. $t \in [0,T]$
\begin{eqnarray*}
\frac{1}{2} \frac{\mbox{d}}{\mbox{d}t} \left( 
\| \partial_t \psi^{\pi}(\cdot,t) \|^2_{L^2(\Omega)} + \beps( \psi^{\pi}(\cdot,t),\psi^{\pi}(\cdot,t))
\right) = (\partial_{tt} e^{\pi}(\cdot,t) , \partial_t \psi^{\pi}(\cdot,t) )_{L^2(\Omega)}.
\end{eqnarray*}
By integration from $0$ to $t_0 \le T$ we obtain
\begin{eqnarray*}
\lefteqn{\frac{1}{2} \| \partial_t \psi^{\pi}(\cdot,t_0) \|^2_{L^2(\Omega)} + \frac{\alpha}{2} \| \psi^{\pi}(\cdot,t_0) \|_{H^1(\Omega)}^2}\\
&\le& \frac{1}{2} \| \partial_t \psi^{\pi}(\cdot,0) \|^2_{L^2(\Omega)} + \frac{\beta}{2} \| \psi^{\pi}(\cdot,0) \|_{H^1(\Omega)}^2
+ \int_{0}^{t_0} |(\partial_{tt} e^{\pi}(\cdot,t) , \partial_t \psi^{\pi}(\cdot,t) )_{L^2(\Omega)}| \hspace{2pt} dt.
\end{eqnarray*}
Since we have $\psi^{\pi}(\cdot,0) = \pi_{H,k}^{\ms}(f) -\pi_{H,k}^{\ms}(f) =0$ we get with the Young and the Cauchy-Schwarz inequality
\begin{eqnarray*}
\lefteqn{\frac{1}{2} \| \partial_t \psi^{\pi}(\cdot,t_0) \|^2_{L^2(\Omega)} + \frac{\alpha}{2} \| \psi^{\pi}(\cdot,t_0) \|_{H^1(\Omega)}^2}\\
&\le& \frac{1}{2} \| \pi_{H,k}^{\ms}(g) - P_{H,k}^{\ms}(g) \|^2_{L^2(\Omega)}
+ \| \partial_{tt} e^{\pi} \|_{L^1(0,T;L^2(\Omega))}^2  + \frac{1}{4} \| \partial_t \psi^{\pi} \|_{L^{\infty}(0,T;L^2(\Omega))}.
\end{eqnarray*}
By taking the supremum over all $0\le t_0 \le T$ we obtain
\begin{eqnarray*}
{\| \partial_t \psi^{\pi} \|^2_{L^{\infty}(L^2)} + 2 \alpha \| \psi^{\pi} \|_{L^{\infty}(H^1)}^2}
\le2 \| \pi_{H,k}^{\ms}(g) - P_{H,k}^{\ms}(g) \|^2_{L^2(\Omega)}
+ 4 \| \partial_{tt} e^{\pi} \|_{L^1(L^2)}^2.
\end{eqnarray*}
The term $\| \partial_{tt} e^{\pi} \|_{L^1(L^2)}$ can be treated with Lemma \ref{lemma-multiscale-projection}, equation (\ref{projection-error-estimate-3-new-1}). Hence it only remains to estimate the term $ \| \pi_{H,k}^{\ms}(g) - P_{H,k}^{\ms}(g) \|^2_{L^2(\Omega)}$. Observe that this term vanishes in \eqref{lemma-apriori-semidiscrete-est-H1-3} as $\partial_t \psi^{\pi}(\cdot,0)=0$. For this term, we have
\begin{align*}
 \| \pi_{H,k}^{\ms}(g) - P_{H,k}^{\ms}(g) \|_{L^2(\Omega)} &\le \| \pi_{H,k}^{\ms}(g) - g \|_{L^2(\Omega)} + \| P_{H,k}^{\ms}(g) - g \|_{L^2(\Omega)}\\
 &\le 2 \| \pi_{H,k}^{\ms}(g) - g \|_{L^2(\Omega)} = 2 \| \pi_{H,k}^{\ms}(g) - g - I_H( \pi_{H,k}^{\ms}(g) - g) \|_{L^2(\Omega)} \\
 &\lesssim H \| \pi_{H,k}^{\ms}(g) - g \|_{H^1(\Omega)} \lesssim H \| g \|_{H^1(\Omega)}.
\end{align*}
Hence, the triangle inequality yields
\begin{eqnarray*}
\lefteqn{\| \partial_t e^{\ms} \|_{L^{\infty}(L^2)} + \| e^{\ms} \|_{L^{\infty}(H^1)}}\\
&\lesssim& \|  \partial_t \ueps- \pi_{H,k}^{\ms}(\partial_t \ueps) \|_{L^{\infty}(L^2)} + \| \ueps- \pi_{H,k}^{\ms}( \ueps) \|_{L^{\infty}(H^1)}\\
&\enspace& +  \|  \partial_{tt} \ueps- \pi_{H,k}^{\ms}(\partial_{tt} \ueps) \|_{L^1(L^2)} + H \| g \|_{H^1(\Omega)}.
\end{eqnarray*}
Lemma \ref{lemma-multiscale-projection} finishes the proof.
\end{proof}
The proof of Lemmas \ref{lemma-apriori-semidiscrete}
and \ref{lemma-apriori-semidiscrete-H1} hence finish as explained the
proof of Theorem \ref{apriori-semidiscrete}. 
We conclude this section by proving the fully-discrete estimate stated in Theorem \ref{apriori-fullydiscrete}.
\begin{proof}[Proof of Theorem \ref{apriori-fullydiscrete}]
The first part of the proof is completely analogous to the one presented by Baker \cite[Section 4]{Bak76} for the classical finite element method. With the same arguments, we can show that
\begin{eqnarray*}
\lefteqn{\max_{0 \le n \le J} \| (\ueps - \uHtk - Q_{h,k}(\uHtk) )(\cdot, t^n ) \|_{L^2(\Omega)}}\\
&\lesssim& \max_{0 \le n \le J} \| (\ueps - \pi_{H,k}^{\ms}(\ueps)) (\cdot, t^n ) \|_{L^2(\Omega)} \\
&\enspace& + \| (\uHtk + Q_{h,k}(\uHtk))(\cdot,0) - \pi_{H,k}^{\ms}(\ueps(\cdot,0)) \|_{L^2(\Omega)} \\
&\enspace& + \| (\partial_t \ueps - \pi_{H,k}^{\ms}(\partial_t \ueps)) \|_{L^2(0,T;L^2(\Omega))}  + \triangle t^2 \left( \| \partial_{t}^3 \ueps \|_{L^2(0,T;L^2(\Omega))} + \| \partial_{t}^4 \ueps \|_{L^2(0,T;L^2(\Omega))} \right),
\end{eqnarray*}
where we note that the term $ \| (\uHtk + Q_{h,k}(\uHtk))(\cdot,0) - \pi_{H,k}^{\ms}(\ueps(\cdot,0)) \|_{L^2(\Omega)}$ is equal to zero, since obviously $(\uHtk + Q_{h,k}(\uHtk))(\cdot,0) = \pi_{H,k}^{\ms}(f)$ by definition of the method. 
The last two terms are already readily estimated. It only remains to bound the two terms $\| (\ueps - \pi_{H,k}^{\ms}(\ueps)) (\cdot, t^n ) \|_{L^2(\Omega)}$ and $\| (\partial_t \ueps - \pi_{H,k}^{\ms}(\partial_t \ueps)) \|_{L^2(L^2)}$ using (\ref{projection-error-estimate-3-new-1}). That finishes the proof.
\end{proof}

\subsection{Estimates in the setting of $G$-convergence}\label{subsection-G-conv-estimates}

In this subsection we prove the homogenization result stated in Theorem \ref{apriori-lod-homogenization}. Consequently, we assume that we are in the homogenization setting of $G$-convergence as established in Definition \ref{def-G-convergence} and Theorem \ref{theorem:homogenization:wave:equation}.
Before we can start the proof, we need to introduce an auxiliary problem. Define
$\hatueps \in L^2(0,T;H^1_0(\Omega))$
as the solution to the wave equation 
\begin{align}
\label{wave-equation-weak-f-eps}
\nonumber\langle \partial_{tt} \hatueps \dott, v \rangle + \left( \aeps \nabla \hatueps \dott, \nabla v \right)_{L^2(\Omega)} &= \left( F \dott, v \right)_{L^2(\Omega)},\\
\left( \hatueps( \cdot , 0 ) , v \right)_{L^2(\Omega)} = \left( f^{\eps} , v \right)_{L^2(\Omega)},\quad
\left( \partial_t \hatueps( \cdot , 0 ) , v \right)_{L^2(\Omega)} &= \left( g , v \right)_{L^2(\Omega)}
\end{align}
for all $v \in H^1_0(\Omega)$ and a.e. $t > 0$,
where the $f^{\eps}$ are defined from the initial value of the original wave equation by \eqref{def-f-eps}.
We recall the definition of the homogenization error 
\begin{align}
\label{def-hom-error}
e_{\mbox{\tiny \rm hom}}(\eps)
 := \| \uhom - \ueps \|_{L^{\infty}(L^2)} + \| f - f^{\eps} \|_{L^2(\Omega)}
\end{align}
and define the following fine scale discretization error
\begin{align}
\label{def-fine-disc-error}
&\hat{e}_{\mbox{\tiny \rm disc}}(h) := \\
\nonumber &\quad
\| \hatueps \hspace{-2pt}-\hspace{-2pt} \pi_h(\hatueps) \|_{L^{\infty}(L^2)} 
+ \| \partial_t \hatueps \hspace{-2pt}-\hspace{-2pt} \pi_h(\partial_t \hatueps) \|_{L^2(L^2)} 
+ \| f^{\eps} \hspace{-2pt}-\hspace{-2pt} \pi_h(f^{\eps}) \|_{L^2(\Omega)}
+ \| f \hspace{-2pt}-\hspace{-2pt} \pi_h(f) \|_{L^2(\Omega)}. 
\end{align}
The following lemma is the main ingredient to prove Theorem \ref{apriori-lod-homogenization}.
\begin{lemma}
\label{lemma-apriori-hom}
Assume that (H0) holds. Let furthermore $g\in H^1_0(\Omega)$, $\partial_t F \in L^2(0,T,L^2(\Omega))$ and $\nabla \cdot (\ahom \nabla f) + F(\cdot,0)\in L^2(\Omega)$.
Then it holds
\begin{eqnarray}
\label{lemma-apriori-semidiscrete-est-1}\lefteqn{\| \uhom - \uHk \|_{L^{\infty}(L^2)} \lesssim_T e_{\mbox{\tiny \rm hom}}(\eps) + \hat{e}_{\mbox{\tiny \rm disc}}(h)} \\
\nonumber
&+& ( H + \theta^k k^{d/2}) \left( \| F \|_{W^{1,2}(L^2)} + \| f \|_{H^1(\Omega)} + \| g \|_{H^1(\Omega)} 
+ \| \nabla \cdot (\ahom \nabla f)
\|_{L^2(\Omega)} \right).
\end{eqnarray}
If we replace the elliptic projection $\pi_{H,k}^{\ms}(f)$ in (\ref{semi-discrete-lod-equation}) by the $L^2$-projection $P_{H,k}^{\ms}(f)$, the term $\| f \hspace{-2pt}-\hspace{-2pt} \pi_h(f) \|_{L^2(\Omega)}$ in the definition of $\hat{e}_{\mbox{\tiny \rm disc}}(h)$ can be dropped.
\end{lemma}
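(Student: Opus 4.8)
The plan is to reduce the estimate to the already-treated well-prepared situation by routing the comparison through the auxiliary solution $\hatueps$ of \eqref{wave-equation-weak-f-eps}, whose initial position $f^{\eps}$ from \eqref{def-f-eps} is chosen precisely so that $\partial_{tt}\hatueps(\cdot,0)=\nabla\cdot(\aeps\nabla f^{\eps})+F(\cdot,0)=\nabla\cdot(\ahom\nabla f)+F(\cdot,0)\in L^2(\Omega)$ with a bound independent of $\eps$. Thus, with $w_0^{\eps}=f^{\eps}$, $w_1^{\eps}=g$ and $w_2^{\eps}=\nabla\cdot(\ahom\nabla f)+F(\cdot,0)$, the data for $\hatueps$ is well-prepared and compatible of order $1$ in the sense of Definition \ref{def-of-comp-condition}, so Proposition \ref{proposition-time-regularity} (with $m=1$) gives $\partial_t\hatueps\in L^{\infty}(0,T;H^1_0(\Omega))$ and $\partial_{tt}\hatueps\in L^{\infty}(0,T;L^2(\Omega))$ with norms controlled by $\|F\|_{W^{1,2}(L^2)}+\|f\|_{H^1}+\|g\|_{H^1}+\|\nabla\cdot(\ahom\nabla f)\|_{L^2}$, uniformly in $\eps$. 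Writing $\umsk:=\uHk+Q_{h,k}(\uHk)$, I then split
\[
\|\uhom-\uHk\|_{L^{\infty}(L^2)}\le\|\uhom-\ueps\|_{L^{\infty}(L^2)}+\|\ueps-\hatueps\|_{L^{\infty}(L^2)}+\|\hatueps-\umsk\|_{L^{\infty}(L^2)}+\|\umsk-\uHk\|_{L^{\infty}(L^2)}.
\]

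The first two terms are absorbed into $e_{\mbox{\tiny \rm hom}}(\eps)$: the first by definition \eqref{def-hom-error}, and for the second I note that $\ueps-\hatueps$ solves the wave equation with coefficient $\aeps$, zero right-hand side, initial position $f-f^{\eps}$ and zero initial velocity; representing it as $\cos(t\sqrt{A^{\eps}})(f-f^{\eps})$ with $A^{\eps}=-\nabla\cdot(\aeps\nabla\cdot)$ self-adjoint and positive yields $\|\ueps-\hatueps\|_{L^{\infty}(L^2)}\le\|f-f^{\eps}\|_{L^2}\le e_{\mbox{\tiny \rm hom}}(\eps)$. The last term is the corrector contribution: since $Q_{h,k}(\uHk)\in W_h=\mbox{Ker}(I_H\vert_{V_h})$, estimate \eqref{lemma-properties-clement-op-est} gives $\|Q_{h,k}(\uHk)\|_{L^2}\lesssim H\|Q_{h,k}(\uHk)\|_{H^1}\lesssim H\|\umsk\|_{H^1}$ using the $H^1$-stability of $Q_{h,k}$ from Proposition \ref{proposition-stability-estimates} together with the $H^1$-stability of $(I_H\vert_{V_H})^{-1}$ (note $\uHk=(I_H\vert_{V_H})^{-1}I_H(\umsk)$), and a discrete energy estimate for \eqref{semi-discrete-lod-equation} bounds $\|\umsk\|_{L^{\infty}(H^1)}$ by the data uniformly in $\eps$; hence this term is $\lesssim_T H$.

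The heart of the argument, and the step I expect to be the main obstacle, is the third term, where I follow the Baker energy argument of Lemma \ref{lemma-apriori-semidiscrete} while accounting for the fact that the method is driven by $f$ whereas $\hatueps$ carries $f^{\eps}$. From \eqref{semi-discrete-lod-equation} one checks the identity $(\partial_{tt}\umsk,v^{\ms})_{L^2(\Omega)}+\beps(\umsk,v^{\ms})=(F,v^{\ms})_{L^2(\Omega)}$ for all $v^{\ms}\in\Vmsk$, so subtracting the weak form of \eqref{wave-equation-weak-f-eps} gives the Galerkin orthogonality $(\partial_{tt}(\hatueps-\umsk),v^{\ms})_{L^2(\Omega)}+\beps(\hatueps-\umsk,v^{\ms})=0$ on $\Vmsk$. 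Setting $\hat e^{\pi}:=\hatueps-\pi_{H,k}^{\ms}(\hatueps)$ and $\hat\psi^{\pi}:=\umsk-\pi_{H,k}^{\ms}(\hatueps)\in\Vmsk$, the test function $\int_t^{t_0}\hat\psi^{\pi}\,ds$ produces $\|\hat\psi^{\pi}\|_{L^{\infty}(L^2)}\lesssim\|\hat\psi^{\pi}(\cdot,0)\|_{L^2}+\|\partial_t\hat e^{\pi}\|_{L^1(L^2)}$ exactly as before. The crucial cancellation is that the velocity defect $\partial_t(\hatueps-\umsk)(\cdot,0)=g-P_{H,k}^{\ms}(g)$ is $L^2$-orthogonal to $\Vmsk$ and $\int_0^{t_0}\hat\psi^{\pi}\in\Vmsk$, so that term drops — this is why the velocity is initialized with the $L^2$-projection. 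The position defect is $\hat\psi^{\pi}(\cdot,0)=\pi_{H,k}^{\ms}(f-f^{\eps})$, which I bound via \eqref{L2-estimate-for-general-v} (low regularity, applied to $v=f-f^{\eps}$) by $\|f-f^{\eps}\|_{L^2}+\|f-\pi_h(f)\|_{L^2}+\|f^{\eps}-\pi_h(f^{\eps})\|_{L^2}+(H+\theta^k k^{d/2})\|f-f^{\eps}\|_{H^1}$, where $\|f-f^{\eps}\|_{H^1}\lesssim\|f\|_{H^1}$ follows by testing \eqref{def-f-eps} with $f^{\eps}$ and using the spectral bounds $\alpha,\beta$. The projection defects $\|\hat e^{\pi}\|_{L^{\infty}(L^2)}$ and $\|\partial_t\hat e^{\pi}\|_{L^1(L^2)}$ are likewise controlled by \eqref{L2-estimate-for-general-v} applied to $\hatueps$ and $\partial_t\hatueps$ (both in $L^{\infty}(H^1)$ by the regularity above), using $\|\cdot\|_{L^1(L^2)}\lesssim_T\|\cdot\|_{L^2(L^2)}$ to match the $\hat{e}_{\mbox{\tiny \rm disc}}(h)$ terms. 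Collecting all contributions and inserting the $\eps$-uniform regularity bound for $\hatueps$ yields \eqref{lemma-apriori-semidiscrete-est-1}.

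For the final claim, replacing $\pi_{H,k}^{\ms}(f)$ by $P_{H,k}^{\ms}(f)$ changes only the position defect to $P_{H,k}^{\ms}(f)-\pi_{H,k}^{\ms}(f^{\eps})$; splitting this as $P_{H,k}^{\ms}(f-f^{\eps})+(P_{H,k}^{\ms}(f^{\eps})-\pi_{H,k}^{\ms}(f^{\eps}))$ and using that $P_{H,k}^{\ms}$ is an $L^2$-contraction and the $L^2$-best approximation in $\Vmsk$, the remaining estimate involves only $\|f-f^{\eps}\|_{L^2}$ and $f^{\eps}$-terms, so the contribution $\|f-\pi_h(f)\|_{L^2}$ to $\hat{e}_{\mbox{\tiny \rm disc}}(h)$ may be dropped.
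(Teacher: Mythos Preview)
Your proof is correct and follows essentially the same route as the paper: regularity of $\hatueps$ from Proposition~\ref{proposition-time-regularity}, Baker's energy argument with the test function $\int_t^{t_0}\hat\psi^{\pi}\,ds$, the projection estimate \eqref{L2-estimate-for-general-v}, and the corrector bound via $I_H(Q_{h,k}(\cdot))=0$. The only organizational difference is that the paper introduces an auxiliary discrete solution $\hatumsk$ (the method run with initial value $f^{\eps}$) and splits $\|\hatueps-\umsk\|$ into $\|\hatueps-\hatumsk\|+\|\hatumsk-\umsk\|$, whereas you handle this term directly and absorb the initial-value mismatch into $\hat\psi^{\pi}(\cdot,0)=\pi_{H,k}^{\ms}(f-f^{\eps})$; the resulting estimates are identical. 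Your use of the spectral representation $\cos(t\sqrt{A^{\eps}})(f-f^{\eps})$ for $\ueps-\hatueps$ is a clean alternative to the paper's energy argument in Step~3 and gives the same bound $\|f-f^{\eps}\|_{L^2}$.
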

\noindent[Proof of estimates \eqref{main-result-est-1} and \eqref{main-result-est-2} in Theorem \ref{apriori-lod-homogenization}.]
We observe that under the assumptions of Theorem \ref{apriori-lod-homogenization} we obviously have
\begin{align*}
\lim_{\eps \rightarrow 0} e_{\mbox{\tiny \rm hom}}(\eps) = 0 \qquad \mbox{and} \qquad \lim_{h \rightarrow 0} \hat{e}_{\mbox{\tiny \rm disc}}(h) = 0.
\end{align*}
Combining this observation with \eqref{lemma-apriori-semidiscrete-est-1} gives \eqref{main-result-est-1}. Next using a triangle inequality
for the term $(u^\eps- \uHk)=(u^\eps- u^0)+(u^0-\uHk)$ together with the estimate \eqref{main-result-est-1} yields \eqref{main-result-est-2}.
\hfill$\square$

\begin{remark}
If we are in the homogenization setting of $G$-convergence and if $h<\eps$, we can assume $e_{\mbox{\tiny \rm hom}}(\eps) + \hat{e}_{\mbox{\tiny \rm disc}}(h) \lesssim H$. This bound resembles the fact the fine grid resolves the microstructures and that the coarse grid is still coarse compared to the speed of the data oscillations in $\aeps$. In this case estimate \eqref{lemma-apriori-semidiscrete-est-1} simplifies to
$$
\| \uhom - \uHk\|_{L^{\infty}(L^2)} \lesssim_T H + \theta^k k^{d/2}.
$$
\end{remark}

\begin{proof}
First, recall that $P_{H,k}^{\ms}$ denotes the $L^2$-projection on $\Vmsk$ and $\pi_{H,k}^{\ms}$ the elliptic projection. In this proof we treat both cases, i.e. $(\uHk+Q_{h,k}(\uHk))( \cdot , 0 ) = \pi_{H,k}^{\ms}(f)$ and $(\uHk+Q_{h,k}(\uHk))( \cdot , 0 ) = P_{H,k}^{\ms}(f)$, at the same time. The main proof consists of six steps. In the first step, we state some properties for the auxiliary problem defined via \eqref{wave-equation-weak-f-eps}. In the second step we split the error $\| \uhom - u_{H} \|_{L^{\infty}(L^2)}$ into several contributions. In the last four steps, these contributions are estimated and combined.

$\\$
{\it Step 1 (auxiliary problem and properties).} Let $f^{\eps} \in H^1_0(\Omega)$ denote the solution of the auxiliary problem \eqref{def-f-eps}. Hence, we have
\begin{align}
\label{energy-f-eps}\| \nabla f^{\eps} \|_{L^2(\Omega)}\le \frac{\beta}{\alpha} \| \nabla f \|_{L^2(\Omega)}.
\end{align}
With this, we use the following notation. By $\ueps$
we denote the solution to the original wave equation \eqref{wave-equation-weak} and by 
$\hatueps$ the solution to the wave equation \eqref{wave-equation-weak-f-eps}, i.e. the wave equation with modified initial value $f^{\eps}$. First, exploiting \eqref{energy-f-eps}, we observe that $\hatueps$ fulfills the classical energy estimate
\begin{align}
\label{data-est-1}
\| \partial_t \hatueps \|_{L^{\infty}(0,T;L^2(\Omega))}
+ \|  \hatueps \|_{L^{\infty}(0,T;H^1(\Omega))}
\lesssim_{T} \| F \|_{L^2(0,T;L^2(\Omega))}  + \| g \|_{L^2(\Omega)} + \| \nabla f \|_{L^2(\Omega)}.
\end{align} 
Furthermore, since by construction $\nabla \cdot (\aeps \nabla f^{\eps}) = \nabla \cdot (\ahom \nabla f) \in L^2(\Omega)$, we can use Proposition \ref{proposition-time-regularity} to verify that
$$
\partial_t \hatueps \in L^{\infty}(0,T;H^1_0(\Omega)) \qquad \mbox{and} \qquad  \partial_{tt} \hatueps  \in L^{\infty}(0,T;L^2(\Omega)).
$$
Consequently we have for all $v\in H^1_0(\Omega)$
\begin{eqnarray*}
\lefteqn{(\partial_{tt} \hatueps(\cdot, 0) , v )_{L^2(\Omega)} = - ( \aeps \nabla \hatueps(\cdot, 0) , \nabla v)_{L^2(\Omega)} + (F(\cdot, 0) , v )_{L^2(\Omega)}}\\
&=& - ( \aeps \nabla f^{\eps} , \nabla v)_{L^2(\Omega)} + (F(\cdot, 0) , v )_{L^2(\Omega)}
= (F(\cdot, 0) + \nabla \cdot (\ahom \nabla f ) , v )_{L^2(\Omega)}
\end{eqnarray*}
and therefore $\| \partial_{tt} \hatueps(\cdot, 0) \|_{L^2(\Omega)} = \| F(\cdot, 0) + \nabla \cdot (\ahom \nabla f ) \|_{L^2(\Omega)}$.
This yields the second energy estimate
\begin{eqnarray}
\label{data-est-2}
\nonumber\lefteqn{\| \partial_{tt} \hatueps \|_{L^{\infty}(0,T;L^2(\Omega))}
+ \| \partial_{t} \hatueps \|_{L^{\infty}(0,T;H^1(\Omega))}}\\
\nonumber&\lesssim_T& \| \partial_{t} F \|_{{L^2}(0,T;L^2(\Omega))}  + \| \nabla g \|_{L^2(\Omega)} +\| \partial_{tt} \hatueps(\cdot, 0) \|_{L^2(\Omega)}\\
\nonumber&=& \| \partial_{t} F \|_{{L^2}(0,T;L^2(\Omega))}  + \| \nabla g \|_{L^2(\Omega)} +\| F(\cdot, 0) + \nabla \cdot (\ahom \nabla f ) \|_{L^2(\Omega)} \\
&\lesssim_T& \| F \|_{W^{1,2}(0,T;L^2(\Omega))}  + \| \nabla g \|_{L^2(\Omega)} +\| \nabla \cdot (\ahom \nabla f ) \|_{L^2(\Omega)},
\end{eqnarray} 
where we used $\| F \|_{L^{\infty}(L^2)} \lesssim_T \| F \|_{W^{1,2}(L^2)}$.

$\\$
{\it Step 2 (error splitting).} Next, we let $\uHk \in H^2(0,T;V_H)$ denote the solution of the multiscale method (\ref{semi-discrete-lod-equation}) and we let $\hatuHk \in H^2(0,T;V_H)$ denote the solution of (\ref{semi-discrete-lod-equation}) with $f$ replaced by $f^{\eps}$. 
For simplicity, we also define
$$
\umsk:=\uHk + Q_{h,k}(\uHk) \qquad \mbox{and} \qquad \hatumsk := \hatuHk + Q_{h,k}(\hatuHk).
$$
With that, we split the total error in the following contributions
\begin{eqnarray*}
\lefteqn{\| \uhom - \uHk \|_{L^{\infty}(L^2)} \le
}\\
&\en&
\underset{=:\mbox{I}}{\underbrace{\| \uhom - \hatueps \|_{L^{\infty}(L^2)}}} +
\underset{=:\mbox{II}}{\underbrace{\| \hatumsk - \umsk \|_{L^{\infty}(L^2)}}} +
\underset{=:\mbox{III}}{\underbrace{\| \hatueps - \hatumsk  \|_{L^{\infty}(L^2)}}} +
\underset{=:\mbox{IV}}{\underbrace{\| Q_{h,k}(\uHk) \|_{L^{\infty}(L^2)}}}.
\end{eqnarray*}
{\it Step 3.} We start with estimating $I$. Let us denote $\hat{e}^{\eps}:=\ueps-\hatueps$. Exploiting the definitions of $\ueps$ and $\hatueps$ we obtain via \eqref{wave-equation-weak} that
\begin{align*}
\langle \partial_{tt} \hat{e}^{\eps} \dott, v \rangle + \left( \aeps \nabla \hat{e}^{\eps} \dott, \nabla v \right)_{L^2(\Omega)} &= 0 \qquad \mbox{for all } v \in H^1_0(\Omega).
\end{align*}
Furthermore, we have $\hat{e}^{\eps}( \cdot , 0 ) = f-f^{\eps}$ and
$\partial_t \hat{e}^{\eps}( \cdot , 0 ) = 0$. 
Testing with $v(\cdot,t)=\int_{t}^{t_0} \hat{e}^{\eps} ( \cdot, s ) \hspace{2pt} ds$ for any $t_0 \in [0,T]$ we obtain
\begin{eqnarray*}
\lefteqn{\frac{d}{dt} \langle \partial_{t} \hat{e}^{\eps} \dott, \int_{t}^{t_0} \hat{e}^{\eps} ( \cdot, s ) \hspace{2pt} ds \rangle
- \frac{1}{2} \frac{d}{dt}\left( \aeps \nabla \int_{t}^{t_0} \hat{e}^{\eps}( \cdot, s ) \hspace{2pt} ds, \nabla \int_{t}^{t_0} \hat{e}^{\eps} ( \cdot, s ) \hspace{2pt} ds \right)_{L^2(\Omega)}
}\\ 
&=& - \frac{1}{2} \frac{d}{dt} 
\left( \hat{e}^{\eps} \dott, \hat{e}^{\eps} \dott \right)_{L^2(\Omega)}. \hspace{250pt}
\end{eqnarray*}
Consequently by integration over the interval $[0,t_0]$ and using $\partial_{t} \hat{e}^{\eps} ( \cdot , 0)=0$ we get
\begin{eqnarray*}
\lefteqn{\| \hat{e}^{\eps}( \cdot , t_0 ) \|_{L^2(\Omega)}^2}\\
&=& \| \hat{e}^{\eps}( \cdot , 0 ) \|_{L^2(\Omega)}^2
- \beps( \int_{0}^{t_0} \hat{e}^{\eps}( \cdot, s ) \hspace{2pt} ds, \int_{0}^{t_0} \hat{e}^{\eps}( \cdot, s ) \hspace{2pt} ds)
+ 2 \langle \partial_{t} \hat{e}^{\eps} ( \cdot , 0), \int_{0}^{t_0} \hat{e}^{\eps} ( \cdot, s ) \hspace{2pt} ds \rangle\\
&\le& \| \hat{e}^{\eps}( \cdot , 0 ) \|_{L^2(\Omega)}^2 = \| f - f^{\eps} \|_{L^2(\Omega)}^2.
\end{eqnarray*}
Consequently, $I \le \| \uhom - \ueps \|_{L^{\infty}(L^2)} + \| f - f^{\eps} \|_{L^2(\Omega)} = e_{\mbox{\tiny \rm hom}}(\eps)$.

$\\$
{\it Step 4.} We can estimate term $II$ in a similar way as term $I$. Hence, proceeding as before (and exploiting definition \eqref{semi-discrete-lod-equation}) gives us
\begin{align*}
\mbox{II} = \| \hatumsk - \umsk \|_{L^{\infty}(L^2)} \le \| (\umsk - \hatumsk)( \cdot , 0 ) \|_{L^2(\Omega)}.
\end{align*}
Now we need to distinguish two cases. Case 1: If $(\umsk - \hatumsk)( \cdot , 0 )=P_{H,k}^{\ms}(f -f^{\eps})$ we can exploit the $L^2$-stability of $P_{H,k}^{\ms}$ to directly get
\begin{align*}
\mbox{II} \le \| P_{H,k}^{\ms}(f -f^{\eps}) \|_{L^2(\Omega)} \lesssim \| f -f^{\eps} \|_{L^2(\Omega)} \le e_{\mbox{\tiny \rm hom}}(\eps).
\end{align*}
Case 2: $(\umsk - \hatumsk)( \cdot , 0 )=\pi_{H,k}^{\ms}(f -f^{\eps})$. In this case, we do not have $L^2$-stability and get the following estimate
\begin{eqnarray*}
\lefteqn{\mbox{II} = \| \hatumsk - \umsk \|_{L^{\infty}(L^2)} \le \|  \pi_{H,k}^{\ms}(f -f^{\eps}) \|_{L^2(\Omega)}}\\
&\le& \|  f -f^{\eps} \|_{L^2(\Omega)} + \| f -f^{\eps}  - \pi_{H,k}^{\ms}(f -f^{\eps}) \|_{L^2(\Omega)}\\
&\le& \|  f -f^{\eps} \|_{L^2(\Omega)} + \|  f -f^{\eps}  - \pi_{h}(f -f^{\eps}) \|_{L^2(\Omega)}
+  ( H + \theta^k k^{d/2}) \| \nabla \pi_{h}( f -f^{\eps}) \|_{L^2(\Omega)}\\
&\lesssim& \|  f -f^{\eps} \|_{L^2(\Omega)} + \|  f -f^{\eps}  - \pi_{h}(f -f^{\eps}) \|_{L^2(\Omega)}
+  ( H + \theta^k k^{d/2}) \| \nabla f \|_{L^2(\Omega)},
\end{eqnarray*}
where we exploited the $H^1$-stability of $\pi_{h}$, the energy estimate $\| \nabla f^{\eps} \| \le \frac{\beta}{\alpha} \| \nabla f \|_{L^2(\Omega)}$
and used the identity \eqref{L2-estimate-for-general-v-prestage}. Consequently, in both cases we obtain 
\begin{align*}
\mbox{II} \lesssim e_{\mbox{\tiny \rm hom}}(\eps) + \hat{e}_{\mbox{\tiny \rm disc}}(h) + ( H + \theta^k k^{d/2}) \| \nabla f \|_{L^2(\Omega)}.
\end{align*}
{\it Step 5.} Next, we treat the third term $\mbox{III}= \| \hatueps - \hatumsk  \|_{L^{\infty}(L^2)}$. We start with the triangle inequality to obtain $\| \hatueps - \hatumsk  \|_{L^{\infty}(L^2)} \le \| \hatueps - \pi_{H,k}^{\ms}(\hatueps) \|_{L^{\infty}(L^2)} + \| \psi^{\pi}\|_{L^{\infty}(L^2)}$, where $\psi^{\pi}:= \hatumsk - \pi_{H,k}^{\ms}(\hatueps)$. For $\| \psi^{\pi}\|_{L^{\infty}(L^2)}$ we can use estimate (\ref{lemma-first-est-befor-tri-ineq}) that we obtained in the proof of Lemma \ref{lemma-apriori-semidiscrete}, i.e. we have
\begin{align}
\label{lemma-first-est-befor-tri-ineq-2}\| \psi^{\pi} \|_{L^{\infty}(0,T;L^2(\Omega))} \lesssim \| \psi^{\pi}(\cdot,0) \|_{L^2(\Omega)} + \| \partial_{t} \hatueps - \pi_{H,k}^{\ms}(\partial_{t} \hatueps) \|_{L^1(0,T;L^2(\Omega))}.
\end{align}
Again, we need to distinguish between $\hatumsk(\cdot,0)=\pi_{H,k}^{\ms}(f^{\eps})$ and $\hatumsk(\cdot,0)=P_{H,k}^{\ms}(f^{\eps})$. In the first case, we observe $\psi^{\pi}(\cdot,0)=0$ and the remaining estimate can be established using Lemma \ref{lemma-multiscale-projection}.
Consequently, we consider the non-trivial $L^2$-projection case, i.e. $\hatumsk(\cdot,0)=P_{H,k}^{\ms}(f^{\eps})$. In this case, we can use the triangle inequality in (\ref{lemma-first-est-befor-tri-ineq-2}) for $\psi^{\pi}=(\hatumsk - \hatueps) + (\hatueps - \pi_{H,k}^{\ms}(\hatueps))$ to obtain
\begin{eqnarray}
\label{estimate-ems}\nonumber\lefteqn{\mbox{III}=\| \hatumsk - \hatueps \|_{L^{\infty}(0,T;L^2(\Omega))}  \lesssim \underset{=:\mbox{III}_1}{\underbrace{\| (\hatumsk - \hatueps)(\cdot,0) \|_{L^2(\Omega)} +  \| (\hatueps - \pi_{H,k}^{\ms}(\hatueps))(\cdot,0) \|_{L^2(\Omega)}}}}\\
&+& \| \hatueps - \pi_{H,k}^{\ms}(\hatueps) \|_{L^{\infty}(0,T;L^2(\Omega))} + \| \partial_{t} \hatueps - \pi_{H,k}^{\ms}(\partial_{t} \hatueps) \|_{L^1(0,T;L^2(\Omega))}. \qquad \hspace{60pt}
\end{eqnarray}
The first term on the right hand side \eqref{estimate-ems} can be estimated as follows
\begin{eqnarray}
\label{estimate-initial-value-f}\nonumber\mbox{III}_1 &=& \| f^{\eps} - P_{H,k}^{\ms}(f^{\eps}) \|_{L^2(\Omega)} + \| f^{\eps} - \pi_{H,k}^{\ms}(f^{\eps})\|_{L^2(\Omega)}\\
\nonumber&\le& \inf_{v \in \Vmsk} \| f^{\eps} - v \|_{L^2(\Omega)} + \| f^{\eps} - \pi_{H,k}^{\ms}(f^{\eps}) \|_{L^2(\Omega)} \\
&\le& 2 \| f^{\eps} - \pi_{H,k}^{\ms}(f^{\eps})\|_{L^2(\Omega)}
\overset{(\ref{L2-estimate-for-general-v-prestage})}{\lesssim} \| f^{\eps} -\pi_{h}(f^{\eps}) \|_{L^2(\Omega)} + (H + \theta^k k^{d/2} ) \| f \|_{H^1(\Omega)},
\end{eqnarray}
where we again used $\| f^{\eps} \|_{H^1(\Omega)}\lesssim \| f \|_{H^1(\Omega)}$. Consequently with (\ref{equation-for-projection}), we have
\begin{eqnarray}
\label{estimate-for-term-II}
\lefteqn{\lefteqn{\mbox{III} \lesssim
\| f^{\eps} -\pi_{h}(f^{\eps}) \|_{L^2(\Omega)} + (H + \theta^k k^{d/2} ) \| f \|_{H^1(\Omega)}}} \\
\nonumber &+& \|  \hatueps - P_H(\pi_{H,k}^{\ms}(\hatueps)) \|_{L^{\infty}(L^2)} + \| \partial_{t} (\hatueps - P_H(\pi_{H,k}^{\ms}(\hatueps))) \|_{L^1(L^2)}\\
\nonumber &+& \|(Q_{H,k}^{\ms} \circ P_H \circ \pi_{H,k}^{\ms})(\hatueps) \|_{L^{\infty}(L^2)} + \| (Q_{H,k}^{\ms} \circ P_H \circ \pi_{H,k}^{\ms})(\partial_{t} \hatueps ) \|_{L^1(L^2)}.
\end{eqnarray}
The terms $\|  \hatueps - P_H(\pi_{H,k}^{\ms}(\hatueps)) \|_{L^{\infty}(L^2)}$ and $\| \partial_{t} (\hatueps - P_H(\pi_{H,k}^{\ms}(\hatueps))) \|_{L^1(L^2)}$ can be estimated with Lemma \ref{lemma-multiscale-projection}, inequality (\ref{projection-error-estimate-1}).
For the last two terms involving $Q_{H,k}(\cdot)$, we can subtract the Cl\'ement-type interpolation $I_H(Q_{H,k}(v))=0$ and use the interpolation error estimate (\ref{lemma-properties-clement-op-est}). This gives us $\mathcal{O}(H)$-terms. Then, we can then use the $H^1$-stability estimates in (\ref{stability-estimates-for-various-ops}) to obtain
\begin{eqnarray*}
\lefteqn{\mbox{III} \lesssim \| f^{\eps} -\pi_{h}(f^{\eps}) \|_{L^2(\Omega)} +  \| \hatueps - \pi_h(\hatueps) \|_{L^{\infty}(0,T;L^2(\Omega))} +  \| \partial_t \hatueps - \pi_h(\partial_t \hatueps) \|_{L^1(0,T;L^2(\Omega))}}\\
\nonumber
&+& ( H + \theta^k k^{d/2}) \left(  \| f \|_{H^1(\Omega)} +  \|  \hatueps \|_{L^{\infty}(0,T;H^1(\Omega))} + \| \partial_t \hatueps \|_{L^1(0,T;H^1(\Omega))} \right).\hspace{80pt}
\end{eqnarray*}
Using the energy estimates \eqref{data-est-1} and \eqref{data-est-2} and the definition of 
$\hat{e}_{\mbox{\tiny \rm disc}}(h)$, we get
\begin{align*}
\mbox{III} \lesssim_T  \hat{e}_{\mbox{\tiny \rm disc}}(h) + ( H + \theta^k k^{d/2}) \left(  \| F \|_{W^{1,2}(L^2)}  + \| f \|_{H^1(\Omega)} +  \| g \|_{H^1(\Omega)} +\| \nabla \cdot (\ahom \nabla f ) \|_{L^2(\Omega)} \right).
\end{align*}

$\\$
{\it Step 6.} The term $\mbox{IV}=\| Q_{h,k}(\uHk) \|_{L^{\infty}(L^2)}$ can be also estimated in the same way, using (\ref{lemma-properties-clement-op-est}) and (\ref{stability-estimates-for-various-ops}). Finally, the energy estimate
$\| \umsk \|_{L^{\infty}(H^1)} \lesssim_T \|F\|_{L^2(L^2)} + \|f\|_{H^1(\Omega)} + \| g\|_{L^2(\Omega)}$ is required to bound $\mbox{IV}$ independent of $H$ and $h$. We obtain the same type of estimate as for $\mbox{III}$. Combining the estimates for $\mbox{I-IV}$ finishes the proof.
\end{proof}

\section{Numerical experiments}
\label{section-numerics}

In this section we present the results for four different model problems. The first model problem is taken from \cite{OwZ08} and involves a microstructure without scale separation, which however can be described by a smooth coefficient. In the second model problem we abandon the smoothness and consider a problem which involves a highly heterogenous discontinuous coefficient. The third model problem is also inspired by a problem presented in \cite{OwZ08}. Here, we add an additional conductivity channel to the heterogenous structure of model problem 2, which results in a high contrast of order $10^4$. Finally, in the last experiment, we investigate
the behavior of the method for smooth, but not well-prepared initial values.

In all computations, we fix the considered time interval to be $[0,T]:=[0,1]$ and the time step size to be $\triangle t:=0.05$. In order to compute the errors for the obtained multiscale approximations, we use a discrete reference solution $u_{h,\triangle t}$ as an approximation to the exact solution of problem (\ref{wave-equation-strong}). This reference solution is determined with the Crank-Nicolson scheme for the time discretization (using equidistant time steps with time step size $\triangle t=0.05$) and a Finite Element method on the fine mesh $\T_h$ for the space discretization. We use a linear interpolation between the solutions obtained for each time step. Hence, $\partial_t u_{h,\triangle t}$ is well defined on each time interval $[t^n,t^{n+1}]$. By $\uHtk$ we denote the multiscale approximation defined according to (\ref{fully-discrete-lod-approximation}).

In this section, we use the following notation for the errors:
\begin{align}
\nonumber e^{0,n} &:= \uHtk(\cdot,t^n) - u_{h,\triangle t}(\cdot,t^n)\\
\label{errors-definition} e^{\ms,n} &:= (\uHtk+Q_{h,k}(\uHtk))(\cdot,t^n)  - u_{h,\triangle t}(\cdot,t^n),\\
\nonumber \partial_t e^{\ms,n} &:= \lim_{t \nearrow t^n} \partial_t(\uHtk+Q_{h,k}(\uHtk))(\cdot,t)  - \partial_t u_{h,\triangle t}(\cdot,t).
\end{align}
By $\| \cdot \|_{L^2(\Omega)}^{\mbox{\tiny rel}}$ (respectively $\| \cdot \|_{H^1(\Omega)}^{\mbox{\tiny rel}}$) we denote the relative error norms, i.e. the absolute errors divided by the associated norm of the reference solution $u_{h,\triangle t}$. Furthermore, for an error $\| e_H \|$ on a coarse grid $\T_H$ and an error $\| e_{H/2} \|$ on a coarse grid $\T_{H/2}$, the EOC (experimental order of convergence) is given by $\mbox{EOC}_{H}:=\log_2(\| e_H \| / \| e_{H/2} \|)/\log_2(2)$.

\subsection{Model problem 1}

The first model problem is extracted from \cite{OwZ08}. As pointed out in \cite{OwZ08}, a sufficiently accurate reference solution $u_{h,\triangle t}$ is obtained for a uniform fine grid with resolution $h=2^{-7}$. Hence, we fix $\T_h$ to be a uniformly refined triangulation of $\Omega$ with $66.049$ DOFs.

\begin{problem}
Let $\Omega:= ]-1,1[^2$ and $T:=1$. Find $\ueps \in L^{\infty}(0,T;H^1_0(\Omega))$ such that
\begin{align}
\nonumber\label{eq:model} \partial_{tt} \ueps(x,t) - \nabla \cdot \left( \aeps(x) \nabla \ueps(x,t) \right) &= F(x) \qquad \mbox{in } \Omega \times(0,T], \\
\ueps(x,t) &= 0 \hspace{35pt} \mbox{on } \partial \Omega \times [0,T], \\
\nonumber\ueps(x,0) = 0 \quad \mbox{and} \quad \partial_t \ueps(x,0) &= 0 \hspace{39pt} \mbox{in } \Omega,
\end{align}
where $F$ is a Gaussian source term given by $F(x_1,x_2)=(2 \pi \sigma^2)^{-1/2} e^{-(x_1^2 + (x_2-0.15)^2)/(2 \sigma^2)}$ for $\sigma=0.05$ and
\begin{align}
\label{diff-coefficient-1}\aeps(x_1,x_2)&:=\frac{1}{6} \left( 1 + \sin(4 x_1^2 x_2^2) + \frac{1.1 + \sin(2 \pi x_1 / \eps_1)}{1.1 + \sin(2 \pi x_2 / \eps_1)} + \frac{1.1 + \sin(2 \pi x_1 / \eps_2)}{1.1 + \cos(2 \pi x_2 / \eps_2)} \right.\\
\nonumber&\left. + \frac{1.1 + \cos(2 \pi x_1 / \eps_3)}{1.1 + \sin(2 \pi x_2 / \eps_3)} +  \frac{1.1 + \sin(2 \pi x_1 / \eps_4)}{1.1 + \cos(2 \pi x_2 / \eps_4)} + \frac{1.1 + \cos(2 \pi x_1 / \eps_5)}{1.1 + \sin(2 \pi x_2 / \eps_5)} \right).
\end{align}
with $\eps_1=1/5$, $\eps_2=1/13$, $\eps_3=1/17$, $\eps_4=1/31$ and $\eps_5=1/65$. The coefficient $\aeps$ is plotted in Figure \ref{diffusion-plot-1}, together with the reference solution $u_{h,\triangle t}$ for $t=1$.
\end{problem}
Note that the Gaussian source term will become singular for $\sigma \rightarrow 0$. Hence it influences the regularity of the solution and we expect the multiscale approximation to be less accurate than for a more regular source term. In particular, $F$ has already a very large $H^1$-norm, which is why we cannot expect to see the third order convergence O$(H^3)$ in (\ref{main-result-est-3-fully-discrete}), unless $H/\|F\|_{H^1(\Omega)} \ll 1$.

{\small
\begin{figure}[h!]
\centering
\includegraphics[scale=0.22]{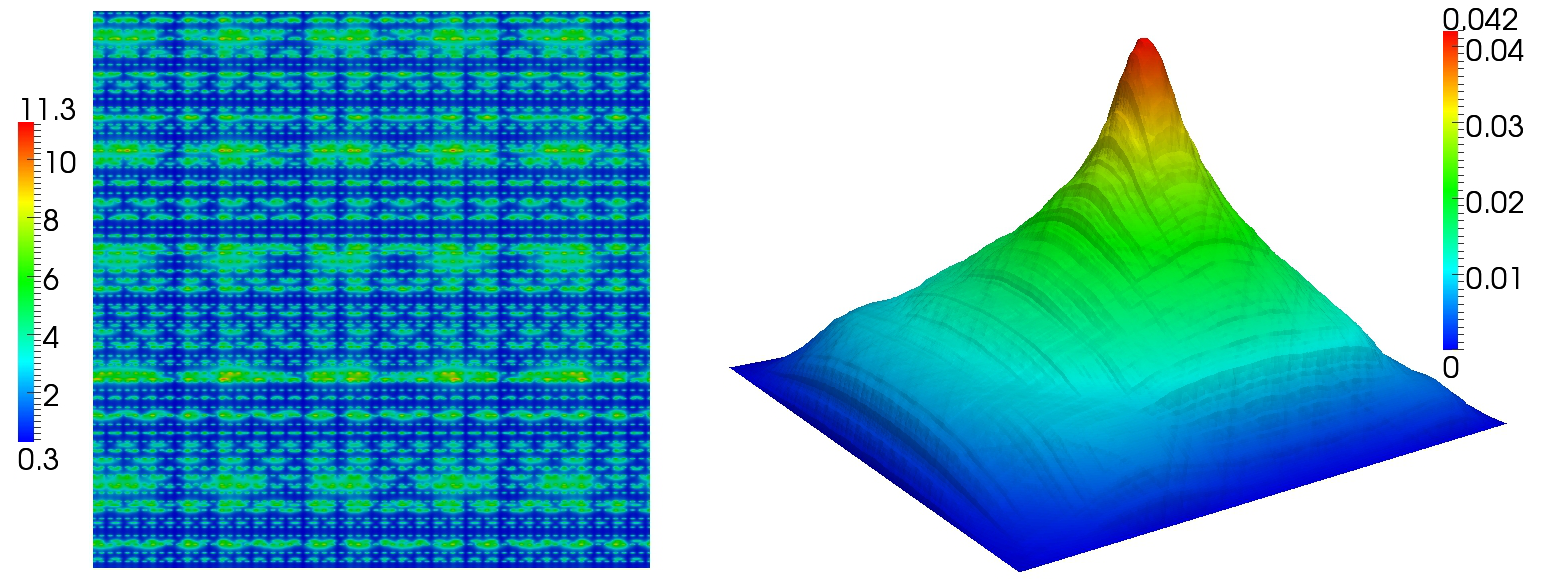}
\caption{\it Model Problem 1. Left Picture: Plot of the coefficient $\aeps$ given by (\ref{diff-coefficient-1}). Right Picture: reference solution $u_{h,\triangle t}$ at $t=1$ for $h=2^{-7}$.}
\label{diffusion-plot-1}
\end{figure}
\begin{figure}[h!]
\centering
\includegraphics[scale=0.22]{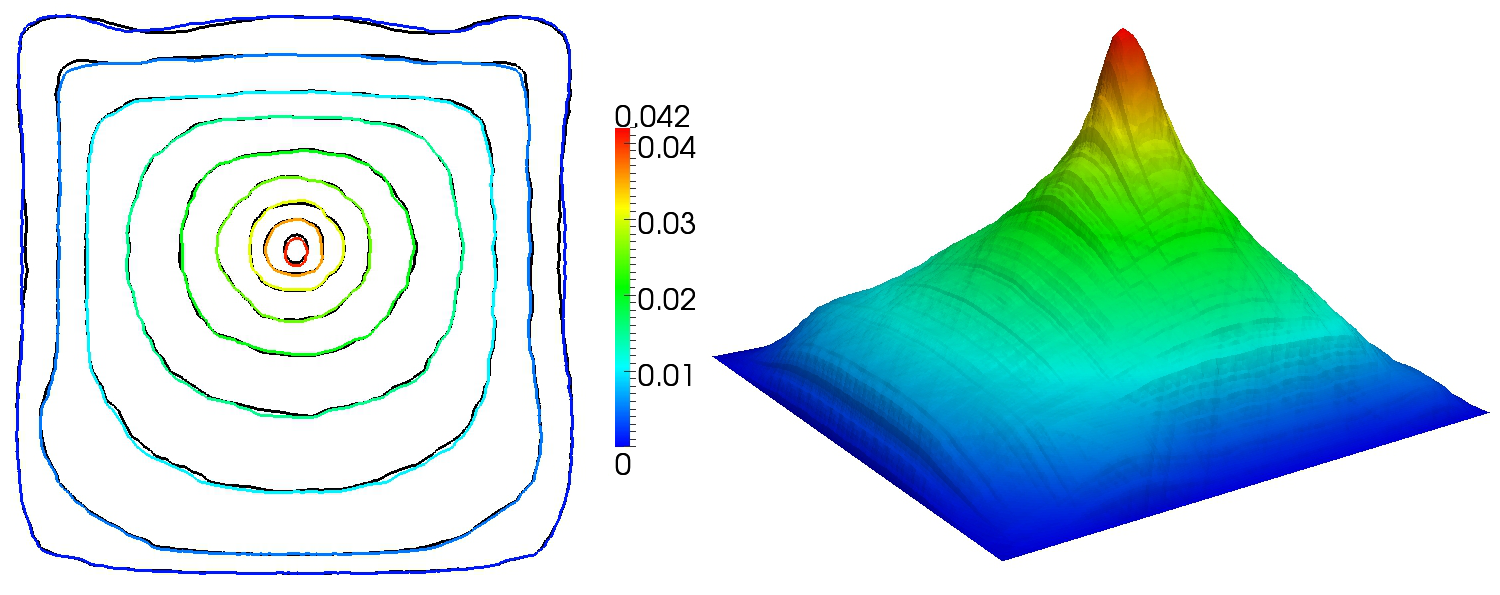}
\caption{\it Model Problem 1, results for $t^n=1$. Left Picture: Comparison of the isolines of the reference solution $u_{h,\triangle t}$ for $h=2^{-7}$ (black isolines) with the multiscale approximation $\uHtk+Q_{h,k}(\uHtk)$ for $(H,h,k)=(2^{-3},2^{-7},2)$ (colored isolines). Right Picture: Plot of the multiscale approximation $\uHtk+Q_{h,k}(\uHtk)$ for $(H,h,k)=(2^{-3},2^{-7},2)$.}
\label{model-problem-1-isolines}
\end{figure}
}
{\small
\begin{table}[h!]
\caption{\it Model Problem 1, results for $t^n=1$. The table depicts relative $L^2$- and $H^1$-errors for the obtained multiscale approximations with respect to the reference solution. The errors are defined in (\ref{errors-definition}).}
\label{table-layers-results-1}
\begin{center}
\begin{tabular}{|c|c|c|c|c|c|c|c|}
\hline $H$      & $k$
& $\| e^{0,n} \|_{L^2(\Omega)}^{\mbox{\tiny rel}}$
& $\| e^{\ms,n} \|_{L^2(\Omega)}^{\mbox{\tiny rel}}$
& $\| e^{\ms,n} \|_{H^1(\Omega)}^{\mbox{\tiny rel}}$
& $\| \partial_t e^{\ms,n} \|_{L^2(\Omega)}^{\mbox{\tiny rel}}$
& $\| \partial_t e^{\ms,n} \|_{H^1(\Omega)}^{\mbox{\tiny rel}}$\\
\hline
\hline $2^{-1}$ & 1   & 0.1448   & 0.1341 & 0.4532 & 0.8718 & 0.9957 \\
\hline $2^{-1}$ & 2   & 0.1394   & 0.1334 & 0.4627 & 0.8312 & 0.9822 \\
\hline
\hline $2^{-2}$ & 1   & 0.0780 &  0.0688 & 0.3517  & 0.6464 & 0.9424 \\
\hline $2^{-2}$ & 2   & 0.0687 &  0.0521 & 0.2919  & 0.5439 & 0.8949 \\
\hline $2^{-2}$ & 3   & 0.0675 &  0.0499 & 0.2835  & 0.5362 & 0.8929 \\
\hline
\hline $2^{-3}$ & 1   & 0.0368 &  0.0328 & 0.2279  & 0.5824 & 1.1262 \\
\hline $2^{-3}$ & 2   & 0.0242 &  0.0130 & 0.1212  & 0.3285 & 0.7769 \\
\hline $2^{-3}$ & 3   & 0.0234 &  0.0105 & 0.1036  & 0.2846 & 0.6998 \\
\hline
\end{tabular}\end{center}
\end{table}
}
\begin{table}[h!]
\caption{\it Model Problem 1, results for $t^n=1$. Overview on the EOCs associated with errors from Table \ref{table-layers-results-1}. We couple $k$ and $H$ by $k=k(H):=\lfloor |\ln(H)| + 1 \rfloor$. For each of the errors $\|e_H\|$ below (for $H=2^{-i}$), we define the average EOC by EOC$:= \frac{1}{2} \sum_{i=1}^2 \log_2(\| e_{2^{-i}} \| / \| e_{2^{-(i+1)}} \|)/\log_2(2)$.}
\label{table-EOC-results-1}
\begin{center}
\begin{tabular}{|c|c|c|c|c|c|c|c|}
\hline $H$      & $k(H)$
& $\| e^{0,n} \|_{L^2(\Omega)}^{\mbox{\tiny rel}}$
& $\| e^{\ms,n} \|_{L^2(\Omega)}^{\mbox{\tiny rel}}$
& $\| e^{\ms,n} \|_{H^1(\Omega)}^{\mbox{\tiny rel}}$
& $\| \partial_t e^{\ms,n} \|_{L^2(\Omega)}^{\mbox{\tiny rel}}$
& $\| \partial_t e^{\ms,n} \|_{H^1(\Omega)}^{\mbox{\tiny rel}}$\\
\hline
\hline $2^{-1}$ & 1   & 0.1448   & 0.1341 & 0.4532 & 0.8718 & 0.9957 \\
\hline $2^{-2}$ & 2   & 0.0687  &  0.0521 & 0.2919  & 0.5439 & 0.8949 \\
\hline $2^{-3}$ & 3   & 0.0234 &  0.0105 & 0.1036  & 0.2846 & 0.6998 \\
\hline
\hline \multicolumn{2}{|c|}{EOC}  & 1.31 & 1.84  & 1.06  & 0.81 & 0.25 \\
\hline
\end{tabular}\end{center}
\end{table}

In Table \ref{table-layers-results-1} the relative errors are depicted for various combinations of $H$ and $k$ (recall that $k$ denotes the truncation parameter defined in (\ref{def-patch-U-k})). The errors are qualitatively comparable to the errors obtained in \cite{OwZ08} for similar computations. Furthermore, we observe that the error evolution is consistent with the theoretically predicated rates. EOCs are given in Table \ref{table-EOC-results-1}. 

For $k\approx |\ln(H)|+1$, we observe roughly a convergence rate of $1.3$ in $H$ for the $L^2$-error of the numerically homogenized solution $\uHtk$. Adding the corresponding corrector $Q_{h,k}(\uHtk)$, the rate is close to $2$ in average. In Figure \ref{model-problem-1-isolines}, a visual comparison between the reference solution and the multiscale approximation is shown. We observe that for $(H,h,k)=(2^{-3},2^{-7},2)$, the solution $\uHtk+Q_{h,k}(\uHtk)$ looks the same as the reference solution $u_{h,\triangle t}$ depicted in (\ref{diffusion-plot-1}). This is also stressed by the comparison of isolines in Figure \ref{model-problem-1-isolines}.

\subsection{Model problem 2}

\begin{figure}
\centering
\includegraphics[scale=0.22]{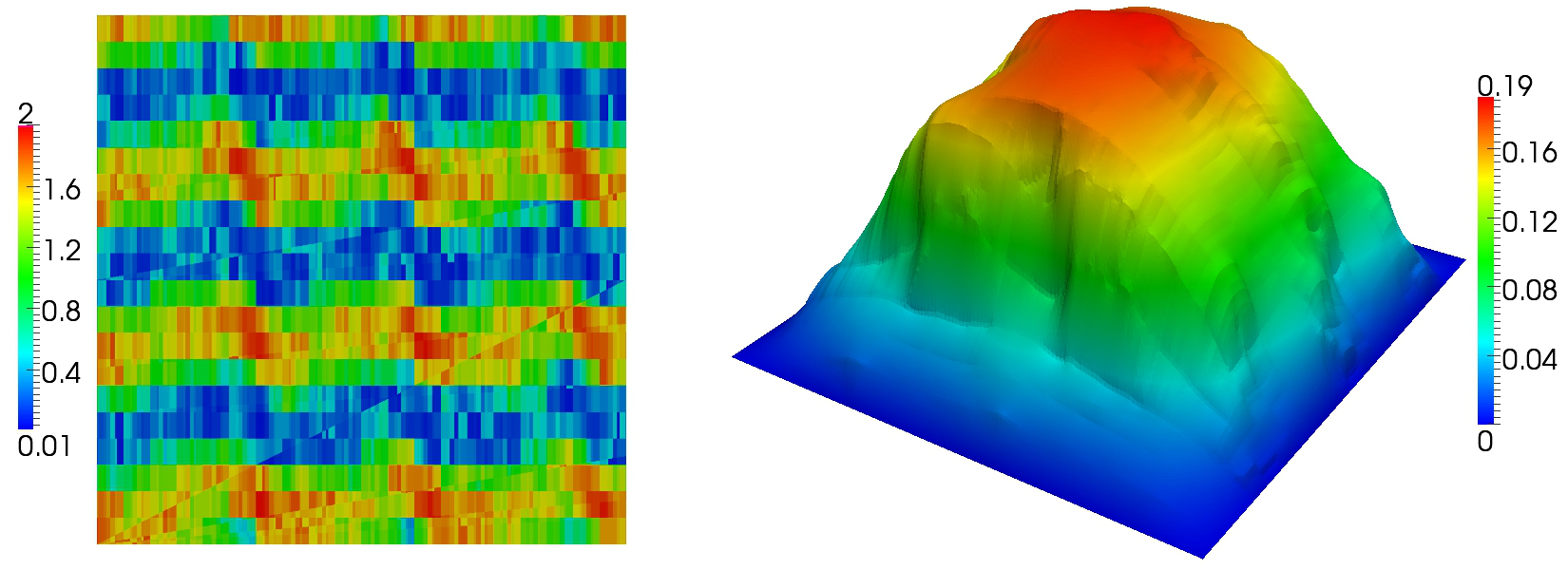}
\caption{\it Model Problem 2, plots for $t^n=1$. Left Picture: Plot of the coefficient $\aeps$ given by (\ref{diff-coefficient-2}). Right Picture: reference solution $u_{h,\triangle t}$ at $t=1$ for $h=2^{-8}$.}
\label{diffusion-plot-2}
\end{figure}

In Model Problem 2 we investigate the influence of a discontinuous coefficient $\aeps$ in our multiscale method. According to the theoretical results, it should not influence the convergence rates. The fine grid $\T_h$ is a uniformly refined triangulation with resolution $h=2^{-8}$.

\begin{problem}
Let $\Omega:= ]0,1[^2$ and $T:=1$. Find $\ueps \in L^{\infty}(0,T;H^1_0(\Omega))$ such that
\begin{align}
\nonumber\label{eq:model} \partial_{tt} \ueps(x,t) - \nabla \cdot \left( \aeps(x) \nabla \ueps(x,t) \right) &= 1 \qquad \hspace{17pt} \mbox{in } \Omega \times(0,T], \\
\ueps(x,t) &= 0 \hspace{37pt} \mbox{on } \partial \Omega \times [0,T], \\
\nonumber\ueps(x,0) = 0 \quad \mbox{and} \quad \partial_t \ueps(x,0) &= 0 \hspace{38pt} \mbox{in } \Omega.
\end{align}
Here, we have
\begin{align}
\label{diff-coefficient-2}\aeps(x)&:= (h \circ c_\varepsilon)(x)
\qquad \text{with} \enspace h(t):=\begin{cases}
t^4 &\text{for} \enspace \frac12 < t < 1 \\ 
t^{\frac{3}{2}} &\text{for} \enspace 1 < t < \frac{3}{2}  \\ 
t &\text{else}
\end{cases}
\end{align}
and where
\begin{displaymath}
c_\varepsilon(x_1,x_2):=1 + \frac{1}{10} \sum_{j=0}^4 \sum_{i=0}^{j} \left( \frac{2}{j+1} \cos \left( \bigl\lfloor i x_2 - \tfrac{x_1}{1+i} \bigr\rfloor + \left\lfloor \tfrac{i x_1}\varepsilon \right\rfloor + \left\lfloor \tfrac{ x_2}\varepsilon \right\rfloor \right) \right).
\end{displaymath}
The coefficient $\aeps$ is plotted in Figure \ref{diffusion-plot-2} together with the reference solution on $\T_h$ for $t=1$.
\end{problem}

\begin{table}[h!]
\caption{\it Model Problem 2. Overview on relative $L^2$- and $H^1$-errors for Model Problem 2 for $t^n=1$. The errors are defined in (\ref{errors-definition}).}
\label{table-layers-results-2}
\begin{center}
\begin{tabular}{|c|c|c|c|c|c|c|c|}
\hline $H$      & $k$
& $\| e^{0,n} \|_{L^2(\Omega)}^{\mbox{\tiny rel}}$
& $\| e^{\ms,n} \|_{L^2(\Omega)}^{\mbox{\tiny rel}}$
& $\| e^{\ms,n} \|_{H^1(\Omega)}^{\mbox{\tiny rel}}$
& $\| \partial_t e^{\ms,n} \|_{L^2(\Omega)}^{\mbox{\tiny rel}}$
& $\| \partial_t e^{\ms,n} \|_{H^1(\Omega)}^{\mbox{\tiny rel}}$\\
\hline
\hline $2^{-2}$ & 1   &  0.1299 & 0.0613  & 0.1802 & 0.1762 & 0.6615 \\
\hline $2^{-2}$ & 2   &  0.1223 & 0.0245  & 0.0800 & 0.1298 & 0.6323 \\
\hline
\hline $2^{-3}$ & 1   &  0.0914 & 0.0616  & 0.1926 & 0.2194 & 0.7255 \\
\hline $2^{-3}$ & 2   &  0.0753 & 0.0191  & 0.0841 & 0.1049 & 0.5902 \\
\hline $2^{-3}$ & 3   &  0.0741 & 0.0085  & 0.0563 & 0.0870 & 0.5688 \\
\hline
\hline $2^{-4}$ & 1   & 0.0327 & 0.0243   & 0.1401 & 0.1197 & 0.6710 \\
\hline $2^{-4}$ & 2   & 0.0240 & 0.0047   & 0.0505 & 0.0600 & 0.5109 \\
\hline $2^{-4}$ & 3   & 0.0239 & 0.0029   & 0.0347 & 0.0562 & 0.5004 \\
\hline
\end{tabular}\end{center}
\end{table}

\begin{table}[t]
\caption{\it Model Problem 2, results for $t^n=1$. Overview on the EOCs associated with errors from Table \ref{table-layers-results-2}. We couple $k$ and $H$ by $k=k(H):=\lfloor |\ln(H)| + 0.5 \rfloor$. The average EOCs are computed according to (\ref{eoc-def}).}
\label{table-EOC-results-2}
\begin{center}
\begin{tabular}{|c|c|c|c|c|c|c|c|}
\hline $H$      & $k(H)$
& $\| e^{0,n} \|_{L^2(\Omega)}^{\mbox{\tiny rel}}$
& $\| e^{\ms,n} \|_{L^2(\Omega)}^{\mbox{\tiny rel}}$
& $\| e^{\ms,n} \|_{H^1(\Omega)}^{\mbox{\tiny rel}}$
& $\| \partial_t e^{\ms,n} \|_{L^2(\Omega)}^{\mbox{\tiny rel}}$
& $\| \partial_t e^{\ms,n} \|_{H^1(\Omega)}^{\mbox{\tiny rel}}$\\
\hline
\hline $2^{-2}$ & 1   & 0.1299  & 0.0613  & 0.1802 & 0.1762 & 0.6615 \\
\hline $2^{-3}$ & 2   & 0.0753  & 0.0191  & 0.0841 & 0.1049 & 0.5902 \\
\hline $2^{-4}$ & 3   & 0.0239  & 0.0029  & 0.0347 & 0.0562 & 0.5004 \\
\hline
\hline \multicolumn{2}{|c|}{EOC}  & 1.22 & 2.20  & 1.19  & 0.82 & 0.20 \\
\hline
\end{tabular}\end{center}
\end{table}

In Table \ref{table-layers-results-2} we depict various relative $L^2$- and $H^1$-errors for $t^n=1$. We observe that the numerically homogenized solution $\uHtk$ already yields good $L^2$-approximation properties with respect to the fine scale reference solution. These approximation properties can still be significantly improved by adding the corrector $Q_{h,k}(\uHtk)$. We can see that the total approximation $\uHtk+Q_{h,k}(\uHtk)$ is also accurate in the norms $\| \nabla \cdot \|_{L^2(\Omega)}$ and $\| \partial_t \cdot \|_{L^2(\Omega)}$. The errors in the norm $\| \partial_t \cdot \|_{H^1(\Omega)}$ are noticeably larger than the errors in all other norms. Hence, the multiscale solution does not necessarily yield a highly accurate $H^1(0,T,H^1(\Omega))$ approximation, even though the discrepancy is still tolerable.

All errors for Model Problem 2 are of the same order as the errors for Model Problem 1 depicted in Table \ref{table-layers-results-1}. In particular, we do not see any error deterioration caused by the discontinuity of the coefficient $\aeps$. The method behaves nicely in both cases. This is stressed by the experimental orders of convergence (EOCs) shown in Table \ref{table-EOC-results-2}. For the EOCs in Table \ref{table-EOC-results-2}, we use the average 
\begin{align}
\label{eoc-def} \mbox{EOC} := (\mbox{EOC}_{2^{-2}} + \mbox{EOC}_{2^{-3}})/2.
\end{align}
Motivated by Theorem \ref{apriori-fullydiscrete}, we couple the coarse mesh $H$ and the truncation parameter $k$ to be the closest integer to $|\ln(H)|$, i.e. we pick $k=k(H):=\lfloor |\ln(H)| + 0.5 \rfloor$ for the computation of the EOCs. This gives us $k=1$ for $H=2^{-2}$, $k=2$ for $H=2^{-3}$ and $k=3$ for $H=2^{-4}$. The corresponding results are stated in Table \ref{table-EOC-results-2}. We observe a close to linear convergence for the $L^2$-error for the numerically homogenized solution $\uHtk$ (as predicted by the theory). Adding the corrector $Q_{h,k}(\uHtk)$, the convergence rate increases to $2.2$ (slightly worse than the optimal rate of $3$). For the $H^1$-error, we observe linear convergence. The convergence rate of the $L^2$-error for time derivatives is slightly below linear convergence, but still very satisfying. Only the convergence rate for the $H^1$-error for the time derivatives is close to stagnation. 

Note that the deviation of these rates from the perfect rates comes from that fact that we do not know the generic constant $C_{\theta}$ in Theorem \ref{apriori-fullydiscrete}. We picked $k=|\ln(H)|$, instead of $k=C_{\theta} |\ln(H)|$. Still we observe that approximating $C_{\theta}$ by $1$ yields highly accurate results that are close to the optimal rates. Practically, this justifies the use of small localization patches $U_k(K)$.

\subsection{Model problem 3}

\begin{figure}
\centering
\includegraphics[scale=0.22]{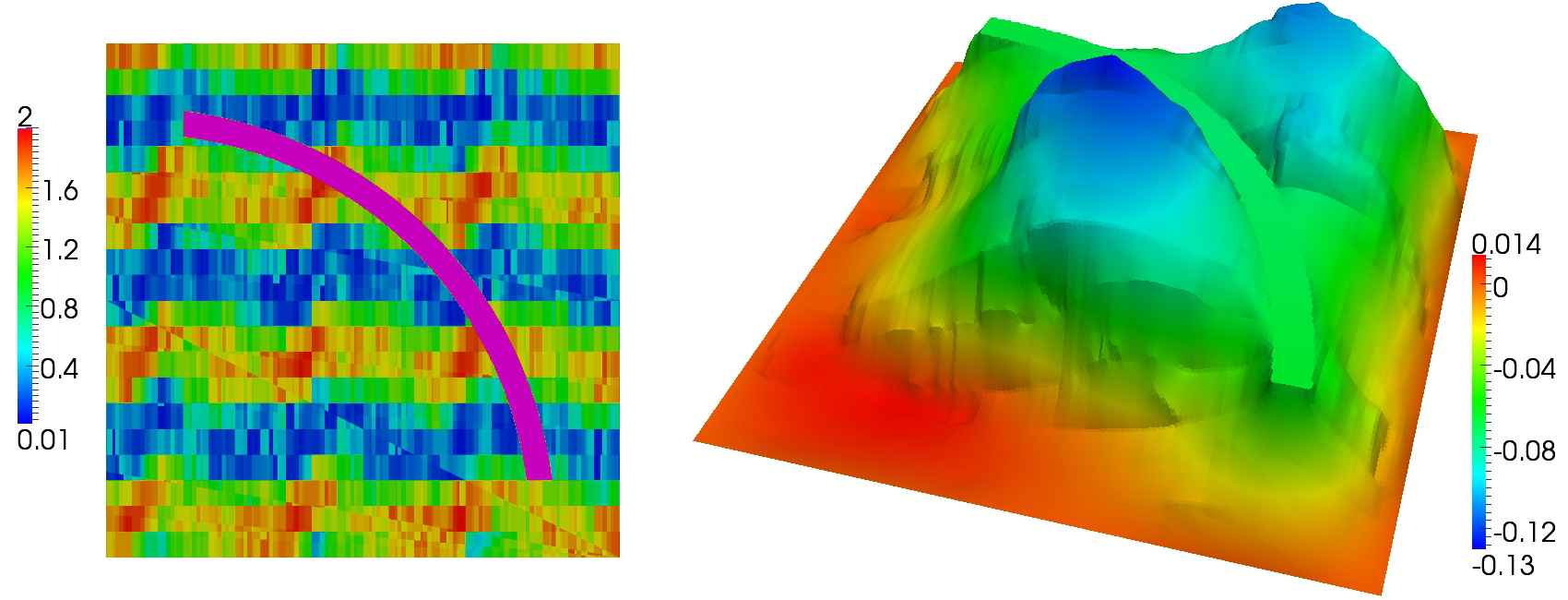}
\caption{\it Model Problem 3. Left Picture: Plot of the coefficient $\aeps$. The basis structure of $\aeps$ is given by (\ref{diff-coefficient-2}), but this structure is perturbed by an arc-like conductivity channel (pink). In this arc $\aeps$ takes the value $100$. Right Picture: reference solution $u_{h,\triangle t}$ at $t=1$ for $h=2^{-8}$.}
\label{diffusion-plot-3}
\end{figure}

\begin{problem}
Let $\Omega:= ]0,1[^2$ and $T:=1$. Find $\ueps \in L^{\infty}(0,T;H^1_0(\Omega))$ such that
\begin{align}
\nonumber\label{eq:model} \partial_{tt} \ueps(x,t) - \nabla \cdot \left( \aeps(x) \nabla \ueps(x,t) \right) &= F(x,t) \qquad \hspace{17pt} \mbox{in } \Omega \times(0,T], \\
\ueps(x,t) &= 0 \hspace{66pt} \mbox{on } \partial \Omega \times [0,T], \\
\nonumber\ueps(x,0) = 0 \quad \mbox{and} \quad \partial_t \ueps(x,0) &= 0 \hspace{66pt} \mbox{in } \Omega.
\end{align}
Here, we have $F(x_1,x_2,t) = \sin( 2.4 x_1 - 1.8 x_2 + 2 \pi t )$ and $\aeps$ is given by equation (\ref{diff-coefficient-2}) but additionally it is disturbed by a high conductivity channel of thickness 0.05. The precise structure of $\aeps$ is depicted in Figure \ref{diffusion-plot-3}, together with the reference solution on $\T_h$ for $t=1$.
\end{problem}

\begin{table}[t]
\caption{\it Model Problem 3, results for $t^n=1$. Overview on relative $L^2$- and $H^1$-errors defined as in (\ref{errors-definition}).}
\label{table-layers-results-3}
\begin{center}
\begin{tabular}{|c|c|c|c|c|c|c|c|}
\hline $H$      & $k$
& $\| e^{0,n} \|_{L^2(\Omega)}^{\mbox{\tiny rel}}$
& $\| e^{\ms,n} \|_{L^2(\Omega)}^{\mbox{\tiny rel}}$
& $\| e^{\ms,n} \|_{H^1(\Omega)}^{\mbox{\tiny rel}}$
& $\| \partial_t e^{\ms,n} \|_{L^2(\Omega)}^{\mbox{\tiny rel}}$
& $\| \partial_t e^{\ms,n} \|_{H^1(\Omega)}^{\mbox{\tiny rel}}$\\
\hline
\hline $2^{-2}$ & 1   & 0.2468  & 0.1564  & 0.3321 & 0.2066 & 0.4486 \\
\hline $2^{-2}$ & 2   & 0.2270  & 0.0782  & 0.1992 & 0.1168 & 0.3269 \\
\hline
\hline $2^{-3}$ & 1   & 0.1451  & 0.1046  & 0.3305 & 0.1639 & 0.4588 \\
\hline $2^{-3}$ & 2   & 0.1184  & 0.0329  & 0.1535 & 0.0607 & 0.2724 \\
\hline $2^{-3}$ & 3   & 0.1174  & 0.0202  & 0.1024 & 0.0468 & 0.2333 \\
\hline
\hline $2^{-4}$ & 1   &  0.0550 & 0.0433  & 0.2186 & 0.0667 & 0.3349 \\
\hline $2^{-4}$ & 2   &  0.0390 & 0.0095  & 0.0803 & 0.0250 & 0.1896 \\
\hline $2^{-4}$ & 3   &  0.0385 & 0.0046  & 0.0464 & 0.0198 & 0.1758 \\
\hline
\end{tabular}\end{center}
\end{table}

\begin{table}[t]
\caption{\it Model Problem 3, results for $t^n=1$. Overview on the EOCs associated with errors from Table \ref{table-layers-results-3}. We couple $k$ and $H$ by $k=k(H):=\lfloor |\ln(H)| + 0.5 \rfloor$. The average EOCs are computed according to (\ref{eoc-def}).}
\label{table-EOC-results-3}
\begin{center}
\begin{tabular}{|c|c|c|c|c|c|c|c|}
\hline $H$      & $k(H)$
& $\| e^{0,n} \|_{L^2(\Omega)}^{\mbox{\tiny rel}}$
& $\| e^{\ms,n} \|_{L^2(\Omega)}^{\mbox{\tiny rel}}$
& $\| e^{\ms,n} \|_{H^1(\Omega)}^{\mbox{\tiny rel}}$
& $\| \partial_t e^{\ms,n} \|_{L^2(\Omega)}^{\mbox{\tiny rel}}$
& $\| \partial_t e^{\ms,n} \|_{H^1(\Omega)}^{\mbox{\tiny rel}}$\\
\hline
\hline $2^{-2}$ & 1   & 0.2468  & 0.1564  & 0.3321 & 0.2066 & 0.4486 \\
\hline $2^{-3}$ & 2   & 0.1184  & 0.0329  & 0.1535 & 0.0607 & 0.2724 \\
\hline $2^{-4}$ & 3   & 0.0385 & 0.0046  & 0.0464  & 0.0198 & 0.1758 \\
\hline
\hline \multicolumn{2}{|c|}{EOC}  & 1.34 & 2.54  & 1.42  & 1.69 & 0.68 \\
\hline
\end{tabular}\end{center}
\end{table}

\begin{figure}
\centering
\includegraphics[scale=0.22]{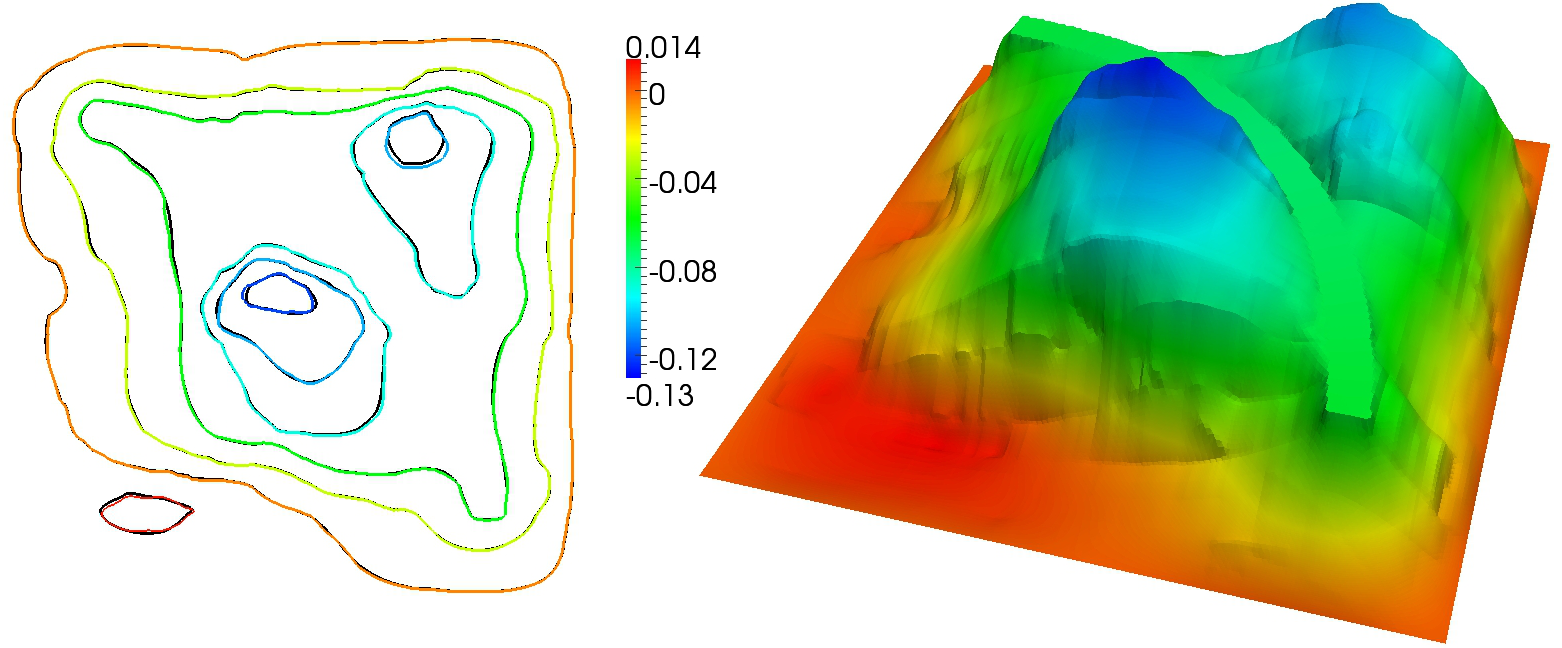}
\caption{\it Model Problem 3, results at $t=1$. Left Picture: Comparison of the isolines of the reference solution $u_{h,\triangle t}$ for $h=2^{-8}$ (black isolines) with the multiscale approximation $\uHtk+Q_{h,k}(\uHtk)$ for $(H,h,k)=(2^{-4},2^{-8},2)$ (colored isolines). Right Picture: Plot of the multiscale approximation $\uHtk+Q_{h,k}(\uHtk)$ for $(H,h,k)=(2^{-4},2^{-8},2)$.}
\label{model-problem-3-isolines}
\end{figure}

This model problem is inspired by a model problem in \cite{OwZ08}. The source term as in \cite{OwZ08} is given by $F(x_1,x_2,t) = \sin( 2.4 x_1 - 1.8 x_2 + 2 \pi t )$ and 
as in \cite{OwZ08} $\aeps$ contains a conductivity channel that perturbs the original structure. As we could not access the data for the channel given in this reference we model a new one in this paper.
This model problem is set to investigate the approximation quality of our multiscale approximations for problems with channels (which do not have to be resolved by the coarse grid) and a high contrast $\beta/\alpha \approx 10^4$. As in the previous model problem, we chose $\T_h$ as a uniformly refined triangulation with resolution $h=2^{-8}$.

We see in Table \ref{table-layers-results-3} that the additional channel in the problem does not deteriorate the convergence rates compared to Model Problem 2.
Again, close to optimal convergence rates are obtained for the choice $k=k(H):=\lfloor |\ln(H)| + 0.5 \rfloor$ (which slightly underestimates the optimal truncation parameter $k$). The corresponding results are given in Table \ref{table-layers-results-3}. The method yields accurate results even in the case of a conductivity channel and despite that the coarse grid does not resolve the channel. Furthermore the high contrast of order $10^4$ does not significantly influence the size of the optimal truncation parameter $k$. In the model problem we can still work with small localization patches $U_k(K)$, independent of the conductivity channel. This is further stressed by Figure \ref{model-problem-3-isolines} which depicts the multiscale approximation (i.e. $\uHtk+Q_{h,k}(\uHtk)$) for the case $(H,k)=(2^{-4},2)$. The solution looks almost identical to the reference solution $u_{h,\triangle t}$ and the corresponding isolines match almost perfectly.

\subsection{Model problem 4}

In this model problem we investigate the performance of the multiscale method in the case of smooth, but not well-prepared initial values. In the light of Theorem \ref{apriori-lod-homogenization}, we can only expect a linear convergence rate with respect to $H$ and in the $L^{\infty}(L^2)$-norm and we cannot expect any rates in the $W^{1,\infty}(L^2)$- and $L^{\infty}(L^2)$-norms, since Theorem \ref{apriori-semidiscrete} is not valid under general assumptions. As a model problem, we consider the setting of Model Problem 2, but with different source and initial values. As in the previous model problem, we chose $\T_h$ as a uniformly refined triangulation with resolution $h=2^{-8}$. We consider the following problem.

\begin{problem}
Let $\Omega:= ]0,1[^2$ and $T:=1$. Find $\ueps \in L^{\infty}(0,T;H^1_0(\Omega))$ such that
\begin{align}
\nonumber\label{eq:model} \partial_{tt} \ueps(x,t) - \nabla \cdot \left( \aeps(x) \nabla \ueps(x,t) \right) &= F(x,t) \qquad \hspace{17pt} \mbox{in } \Omega \times(0,T], \\
\ueps(x,t) &= 0 \hspace{66pt} \mbox{on } \partial \Omega \times [0,T], \\
\nonumber\ueps(x,0) = f \quad \mbox{and} \quad \partial_t \ueps(x,0) &= g \hspace{66pt} \mbox{in } \Omega.
\end{align}
Here, we have $F(x_1,x_2) = \sin( 2 \pi x_1 ) \sin( 2 \pi x_2 )$, $f(x_1,x_2)=x_1 (1-x_1)x_2 (1-x_2)$, $g(x_1,x_2)=\sin( 2 \pi x_1 ) x_2 (1 - x_2)$ and $\aeps$ is given by equation (\ref{diff-coefficient-2}).
\end{problem}

\begin{table}[t]{
\caption{\it Model Problem 4, results for $t^n=1$. Overview on relative $L^2$- and $H^1$-errors defined as in (\ref{errors-definition}).}
\label{table-layers-results-4}
\begin{center}
\begin{tabular}{|c|c|c|c|c|c|c|c|}
\hline $H$      & $k$
& $\| e^{0,n} \|_{L^2(\Omega)}^{\mbox{\tiny rel}}$
& $\| e^{\ms,n} \|_{L^2(\Omega)}^{\mbox{\tiny rel}}$
& $\| e^{\ms,n} \|_{H^1(\Omega)}^{\mbox{\tiny rel}}$
& $\| \partial_t e^{\ms,n} \|_{L^2(\Omega)}^{\mbox{\tiny rel}}$
& $\| \partial_t e^{\ms,n} \|_{H^1(\Omega)}^{\mbox{\tiny rel}}$\\
\hline
\hline $2^{-2}$ & 1   & 0.2809  & 0.2598  & 0.6180 & 0.3522 & 0.9735 \\
\hline $2^{-2}$ & 2   & 0.1865  & 0.1229  & 0.4954 & 0.3147 & 0.9692 \\
\hline
\hline $2^{-3}$ & 1   & 0.1579  & 0.1420  & 0.5680 & 0.3243 & 0.9691 \\
\hline $2^{-3}$ & 2   & 0.1188  & 0.0894  & 0.4869 & 0.2473 & 0.9593 \\
\hline $2^{-3}$ & 3   & 0.1145  & 0.0820  & 0.4741 & 0.2372 & 0.9573 \\
\hline
\hline $2^{-4}$ & 1   & 0.0885  & 0.0857  & 0.4774 & 0.2891 & 0.9850 \\
\hline $2^{-4}$ & 2   & 0.0466  & 0.0361  & 0.3249 & 0.1925 & 0.9506 \\
\hline $2^{-4}$ & 3   & 0.0423  & 0.0289  & 0.3042 & 0.1823 & 0.9479 \\
\hline
\hline $2^{-5}$ & 1   & 0.0499  & 0.0489  & 0.3481 & 0.1849 & 0.9593 \\
\hline $2^{-5}$ & 2   & 0.0198  & 0.0149  & 0.2256 & 0.1277 & 0.9204 \\
\hline $2^{-5}$ & 3   & 0.0173  & 0.0109  & 0.2059 & 0.1229 & 0.9171 \\
\hline
\end{tabular}\end{center}}
\end{table}

\begin{table}[t]{
\caption{\it Model Problem 4, results for $t^n=1$. Overview on the EOCs associated with errors from Table \ref{table-layers-results-4}. We couple $k$ and $H$ by $k=k(H):=\lfloor |\ln(H)| + 0.5 \rfloor$. The average EOCs are computed with $\mbox{EOC} := (\mbox{EOC}_{2^{-2}} + \mbox{EOC}_{2^{-3}} + \mbox{EOC}_{2^{-4}})/3$.}
\label{table-EOC-results-4}
\begin{center}
\begin{tabular}{|c|c|c|c|c|c|c|c|}
\hline $H$      & $k(H)$
& $\| e^{0,n} \|_{L^2(\Omega)}^{\mbox{\tiny rel}}$
& $\| e^{\ms,n} \|_{L^2(\Omega)}^{\mbox{\tiny rel}}$
& $\| e^{\ms,n} \|_{H^1(\Omega)}^{\mbox{\tiny rel}}$
& $\| \partial_t e^{\ms,n} \|_{L^2(\Omega)}^{\mbox{\tiny rel}}$
& $\| \partial_t e^{\ms,n} \|_{H^1(\Omega)}^{\mbox{\tiny rel}}$\\
\hline
\hline $2^{-2}$ & 1   & 0.2809  & 0.2598  & 0.6180 & 0.3522 & 0.9735 \\
\hline $2^{-3}$ & 2   & 0.1188  & 0.0894  & 0.4869 & 0.2473 & 0.9593 \\
\hline $2^{-4}$ & 3   & 0.0423  & 0.0289  & 0.3042 & 0.1823 & 0.9479 \\
\hline $2^{-5}$ & 3   & 0.0173  & 0.0109  & 0.2059 & 0.1229 & 0.9171 \\
\hline
\hline \multicolumn{2}{|c|}{EOC} 
                                 & 1.34      & 1.53      & 0.53     & 0.51     & 0.03 \\
\hline
\end{tabular}\end{center}}
\end{table}

The corresponding numerical results are depicted in Table \ref{table-layers-results-4}. In order to compute the experimental orders of convergence, we pick as before the relation $k=k(H):=\lfloor |\ln(H)| + 0.5 \rfloor$ to compute a suitable truncation parameter. The results are shown in Table \ref{table-EOC-results-4}. The crucial observation that we make is that Theorem \ref{apriori-lod-homogenization} seems to be indeed optimal and that Theorem \ref{apriori-semidiscrete} does not generalize to {\it not well-prepared} initial values. We observe that the $L^{\infty}(L^2)$-error converges with an order between $1$ and $1.5$, which is just what we predicted. We also note that even though the error becomes slightly smaller when we use the corrector $Q_{h,k}(u_{H,k})$, the rates of convergence remain basically the same (this is in contrast to the case of well-prepared data, where we could improve the rates with the corrector). For the convergence rates for the error in the $W^{1,\infty}(L^2)$- and $L^{\infty}(L^2)$-norms we still observe a rate of order $0.5$, which might be an indication that a relaxed version of Theorem \ref{apriori-semidiscrete} might still hold for arbitrary initial values. However, from the analytical point a corresponding argument is still missing. The main conclusion that we can draw from Model Problem 4 is that we can also numerically confirm that
the proposed multiscale method still works in the general setting of not well-prepared data. This is a significant observation that distinguishes our multiscale method from other multiscale methods for the wave equation.

\medskip
$\\$
{\bf Acknowledgements.}
This work is partially supported by the Swiss National Foundation, Grant: No $200021\_134716/1$ and by the Deutsche Forschungsgemeinschaft, DFG-Grant: OH 98/6-1. We would also like to thank the anonymous reviewers for their constructive feedback that helped us to improve the contents of this paper.

\def\cprime{$'$}

\end{document}